\documentclass[11pt]{amsart}
\usepackage{amsmath, amscd, amsthm, amssymb}
\usepackage{graphicx}

\usepackage{fullpage}

\def\C{{\mathbf C}}
\def\R{{\mathbf R}}
\def\Z{{\mathbf Z}}
\def\Q{{\mathbf Q}}

\def\blank{\phantom{x}}

\newtheorem{theorem}{Theorem}[section]
\newtheorem{thm}[theorem]{Theorem}
\newtheorem{lemma}[theorem]{Lemma}

\newtheorem{proposition}[theorem]{Proposition}
\newtheorem{prop}[theorem]{Proposition}

\newtheorem{claim}[theorem]{Claim}

\theoremstyle{definition}
\newtheorem{definition}[theorem]{Definition}
\newtheorem{defin}[theorem]{Definition}

\theoremstyle{remark}
\newtheorem{remark}[theorem]{Remark}

\DeclareMathOperator{\charf}{char}
\renewcommand{\(}{\left(}
\renewcommand{\)}{\right)}
\newcommand{\set}[1]{\left\lbrace#1\right\rbrace}

\newcommand{\mm}[4]{\(\begin{smallmatrix} #1 & #2\\ #3 & #4\end{smallmatrix}\)}
\newcommand{\mb}[4]{\(\begin{array}{cc} #1 & #2\\ #3 & #4\end{array}\)}

\DeclareMathOperator{\diag}{diag}
\DeclareMathOperator{\tr}{tr}
\DeclareMathOperator{\ind}{Ind}

\DeclareMathOperator{\Spin}{Spin}

\DeclareMathOperator{\GSp}{GSp}
\DeclareMathOperator{\PGSp}{PGSp}

\DeclareMathOperator{\GL}{GL}


\begin{document}
\title[Spin on $\GSp_6$]{The Spin $L$-function on $\GSp_6$ via a non-unique model}
\author{Aaron Pollack}
\address{Department of Mathematics, Stanford University, Stanford, CA 94305, USA}
\email{aaronjp@stanford.edu}
\author{Shrenik Shah}
\address{Department of Mathematics, Columbia University, New York, NY 10027, USA}
\email{snshah@math.columbia.edu}

\thanks{A.P.\ has been partially supported by NSF grant DMS-1401858.  S.S.\ has been supported in parts through NSF grants DGE-1148900, DMS-1401967, and also through the Department of Defense (DoD) National Defense Science \& Engineering Graduate Fellowship (NDSEG) Program.}

\begin{abstract}
We give two global integrals that unfold to a non-unique model and represent the partial Spin $L$-function on $\PGSp_6$.  We deduce that for a wide class of cuspidal automorphic representations $\pi$, the partial Spin $L$-function is holomorphic except for a possible simple pole at $s=1$, and that the presence of such a pole indicates that $\pi$ is an exceptional theta lift from $\mathrm{G}_2$.  These results utilize and extend previous work of Gan and Gurevich, who introduced one of the global integrals and proved these facts for a special subclass of these $\pi$ upon which the aforementioned model becomes unique.  The other integral can be regarded as a higher rank analogue of the integral of Kohnen-Skoruppa on $\GSp_4$.
\end{abstract}

\maketitle

\section{Introduction}
We give new Rankin-Selberg integral representations for the Spin $L$-function of cuspidal automorphic representations $\pi$ on $\PGSp_6$.  Previously, Bump-Ginzburg~\cite{bg} and Vo~\cite{vo} had considered the case when $\pi$ is generic.  The integral representations in this paper do not assume genericity.  Instead, the global integrals we write down unfold to a Fourier coefficient that does not appear to factorize for general cusp forms.  Specifically, we use a Fourier coefficient attached to the unipotent class $(4\ 2)$, in the notation of \cite{grs}.  It follows from \cite[Theorem 2.7]{grs} that the wavefront set of every cuspidal automorphic representation on $\GSp_6$ contains a coefficient of type $(6)$, $(4\ 2)$, or $(2\ 2\ 2)$, so having an integral representation that unfolds to the $(4\ 2)$ coefficient is very desirable.  (In this notation, the representations supporting the coefficient (6) are precisely the generic ones.)  We will analyze the integrals using a method that slightly generalizes the approach of Piatetski-Shapiro and Rallis \cite{psr}.  By combining our calculation with results of Langlands \cite{langlandsEP}, Shahidi \cite{shahidi}, and works of Gan and Gurevich \cite{gan,gg}, we can deduce consequences for possible poles of the Spin $L$-function and a relation to the image of the exceptional theta lifting from the split group of type $\mathrm{G}_2$.  See \cite{grs2} and \cite{gj} for more on this theta lift.

The method of \cite{psr} was developed to handle integral representations that unfold to a model that is not unique, such as the classical Rankin-Selberg integral representation discovered by Andrianov \cite{a1}.  Bump-Furusawa-Ginzburg \cite{bfg} use this method to analyze several integral representations for $L$-functions on $\GL_n$ and classical groups that unfold to a non-unique model.  More recently, Gurevich and Segal \cite{gs} produce a Rankin-Selberg integral representation unfolding to a non-unique model for the degree seven $L$-function on the exceptional group $\mathrm{G}_2$, and our paper \cite{pollackShahKS} puts the Rankin-Selberg integral of Kohnen and Skoruppa on $\GSp_4$ in the context of a non-unique model.

\subsection{The global integrals} We give two global integrals that inherit their meromorphicity from an Eisenstein series on $\GL_2$.  The first integral we give has been considered previously by Gan-Gurevich~\cite{gg}, but when restricted to a special class of CAP forms on $\GSp_6$ constructed by Ginzburg \cite{ginzburg}.  For these forms, Gan-Gurevich~\cite{gg} show that the Fourier coefficient alluded to above factorizes, and hence that the global integral is an Euler product.

To describe the integrals more precisely, first fix an auxiliary \'etale quadratic extension $L = \mathbf{Q}(\sqrt{D})$ of $\mathbf{Q}$.  Denote by $\GL_{2,L}^*$ the algebraic group of elements of $\GL_{2,L}$ with determinant in $\mathbf{Q}$.  Set $H$ the subgroup of elements $(g_1, g_2)$ in $\GL_2 \times \GL_{2,L}^*$ with $\det g_1 = \det g_2$.  There is an embedding $H \rightarrow \GSp_6$.  Set $E^*(g_1,s)$ the normalized Eisenstein series on $\GL_2$, and let $\phi$ be a cusp form in the space of $\pi$, which is assumed to have trivial central character.  We set
\[ I(\phi,s) = \int_{H(\Q)Z(\mathbf A)\backslash H(\mathbf A)} E^*(g_1,s)\phi(g) dg.\] 

As is shown in Gan-Gurevich~\cite{gg}, the integral $I(\phi,s)$ unfolds.  Denote by $R$ the standard parabolic subgroup of $\GSp_6$ with Levi $\GL_1 \times \GL_1 \times \GL_2$, and by $U_R$ its unipotent radical.  Associated to the quadratic extension $L$ is a nondegenerate unitary character 
\[\chi: U_R(\mathbf Q)\backslash U_R(\mathbf A) \rightarrow \mathbf C^\times.\]
Define the Fourier coefficient
\begin{equation}\label{phiChi}\phi_\chi(g) = \int_{U_R(\Q)\backslash U_R(\mathbf A)} \chi^{-1}(u)\phi(ug) du.\end{equation}
Then one has
\[I(\phi,s) = \int_{N(\mathbf A)Z(\mathbf A)\backslash H(\mathbf A)} f^*(g_1,s)\phi_{\chi}(g) dg,\]
where $f^*(g_1,s)$ is the (normalized) section defining the Eisenstein series, and $N$ is the unipotent radical of a Borel subgroup of $H$.

Recall that a $(U_R,\chi)$-model on an irreducible admissible representation $(\pi_p,V)$ is a linear functional $\Lambda: V \rightarrow \C$ satisfying $\Lambda(\pi(u)v) = \chi(u)\Lambda(v)$ for all $u \in U_R, v \in V$.  The following is our first main result.
\begin{theorem} \label{thm:introunramcalc}
Let $\pi = \otimes \pi_v$ be a cuspidal automorphic representation of $\PGSp_6$.  Suppose that $p$ is finite, $p\nmid 2D$, and $\pi_p$ is unramified.  Let $\Lambda$ be any $(U_R,\chi)$-model on $\pi_p$ and let $v_0$ be a spherical vector.  Set $\lambda(g) = \Lambda(\pi(g)v_0)$ and $f_p^*(g_1,s)$ the normalized spherical section in $Ind_{B(\Q_p)}^{\GL_2(\Q_p)}(\delta_B^s)$ (see Section \ref{subsec:unfolding}).  Then
\[\int_{N(\mathbf{Q}_p)Z(\mathbf{Q}_p)\backslash H(\mathbf{Q}_p)} f_p^*(g_1,s)\lambda(g) dg\]
is $\lambda(1)$ times the local Spin $L$-function, $L(\pi_p, \Spin,s)$. 
\end{theorem}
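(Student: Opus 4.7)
The plan is to carry out the unramified computation by an Iwasawa-decomposition reduction and then to evaluate the resulting torus sum by the Piatetski-Shapiro--Rallis method for non-unique models, since the $(U_R,\chi)$-model on $\pi_p$ is not assumed unique and Casselman--Shalika does not apply directly. First, using $H(\mathbf{Q}_p) = N(\mathbf{Q}_p) T(\mathbf{Q}_p) K_H$ for a hyperspecial maximal compact $K_H$, the right $K_H$-invariance of $\lambda(g) = \Lambda(\pi_p(g)v_0)$ (from sphericality of $v_0$) and of $f^*(\cdot,s)$ reduces the integral to a weighted sum over $Z(\mathbf{Q}_p)\backslash T(\mathbf{Q}_p)/(T(\mathbf{Q}_p)\cap K_H)$. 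Left-$N$-invariance of the integrand is a routine compatibility between the transformation of $f^*$ under the $\GL_2$-Borel and the $(U_R,\chi)$-equivariance of $\lambda$. Parametrizing the torus quotient by integers, with a case split according to whether $L\otimes \mathbf{Q}_p$ is split or inert, the integral becomes
\[
\zeta_p(2s+1)^{-1}\, \sum_{t} c(t;s)\, \lambda(t),
\]
where $c(t;s)$ is an elementary factor coming from $f^*$ and the modulus character of the Borel of $H$.

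Second --- and this is the crux --- since the $(U_R,\chi)$-model on $\pi_p$ is not unique, the torus values $\lambda(t)$ cannot be read off by a Casselman--Shalika-type formula. Instead, I would follow the Piatetski-Shapiro--Rallis strategy: for each element $T$ of the spherical Hecke algebra $\mathcal{H}(H(\mathbf{Q}_p),K_H)$, the eigenrelation $\pi_p(T)v_0 = \hat{T}(t_\pi) v_0$ (with $t_\pi$ the Satake parameter of $\pi_p$ and $\hat{T}$ the Satake transform) yields
\[
\int_{H(\mathbf{Q}_p)} T(h)\, \lambda(gh)\, dh = \hat{T}(t_\pi)\, \lambda(g).
\]
Expanding the left side via the Cartan decomposition and specializing $g$ to torus elements produces a system of linear recurrences among the values $\{\lambda(t)\}$. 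Combined with the support restriction coming from the $\chi$-equivariance of $\lambda$ under $U_R(\mathbf{Z}_p)$, which forces $\lambda(t) = 0$ outside a specific subcone of the torus, these recurrences pin down $\sum_t c(t;s)\lambda(t)$ as $\lambda(1)$ times an explicit rational function in $q^{-s}$ and the Satake parameters of $\pi_p$.

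Finally, identifying that rational function with $L(\pi_p,\Spin,s) = \det(1 - q^{-s}\,\mathrm{Spin}(t_\pi))^{-1}$, where $\mathrm{Spin}$ is the eight-dimensional spin representation of $\mathrm{GSpin}_7(\mathbf{C})$, should come down to expanding the geometric series and matching traces of symmetric powers of $\mathrm{Spin}$. The main obstacle will be the combination of ingredients in the second step: verifying that the specific support cone dictated by the $(4\ 2)$-type character $\chi$, together with the Hecke-algebra recursions on $H$, produces exactly the degree-eight spin factor rather than some other symmetric function of the Satake parameters. A useful consistency check along the way is that the resulting power series is forced by the Hecke action to be symmetric under the full Weyl group of the Levi of $H$ embedded in $\mathrm{GSpin}_7$.
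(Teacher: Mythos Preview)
Your first reduction via the Iwasawa decomposition on $H$ is correct and matches the paper: the local integral becomes the torus sum $\zeta_p(2s)\sum_{|t|\le|\ell|\le1}|t|^{s-3}|\ell|^2\lambda(\iota(t,\ell))$. The support cutoff $|t|\le|\ell|\le1$ is exactly the one forced by the $(U_R,\chi)$-equivariance (Lemma~\ref{lem:lambdanonvan} in the paper), so that part of your outline is fine.

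The gap is in your second step. You propose to use the spherical Hecke algebra $\mathcal{H}(H(\mathbf{Q}_p),K_H)$ of the \emph{subgroup} $H$ and write an eigenrelation $\pi_p(T)v_0=\hat T(t_\pi)v_0$. But $v_0$ is spherical for $\GSp_6(\mathbf{Z}_p)$, not for $K_H$ acting through the Hecke algebra of $H$; convolving $v_0$ with an $H$-Hecke operator does not return a scalar multiple of $v_0$, so no such recursion among the $\lambda(t)$ is available from $\mathcal{H}(H,K_H)$. This is not a technicality: it is exactly why the usual Piatetski-Shapiro--Rallis argument does not go through here unchanged.

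What the paper actually does is use the Hecke algebra of the ambient group $\GSp_6$, where $v_0$ \emph{is} an eigenvector, but in a rather indirect way. It invokes Andrianov's identity
\[
\int_{\GSp_6(\mathbf{Q}_p)}\omega(g)\Delta^s(g)\,dg\;=\;N(\omega,s)\,L(\pi_p,\Spin,s-3),
\]
with $\Delta^s(g)=|\nu(g)|^s\charf_{M_6(\mathbf{Z}_p)}(g)$ and $N(s)$ a specific degree-six polynomial in $p^{-s}$ with coefficients in the $\GSp_6$ Hecke algebra. Since $\Delta^s$ is bi-$\GSp_6(\mathbf{Z}_p)$-invariant, the same integral with $\lambda$ in place of the spherical function $\omega$ equals $\lambda(1)$ times the right side. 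The heart of the paper is then the direct verification of
\[
\int_{\GSp_6(\mathbf{Q}_p)}\lambda(g)\Delta^s(g)\,dg\;=\;\zeta_p(\omega,2s-6)\sum|t|^{s-6}|\ell|^2\,\Lambda\bigl(\pi(\iota(t,\ell))\,N(s)*v_0\bigr),
\]
by computing each side independently (Sections~\ref{delta} and~\ref{N}) and matching the resulting explicit expressions in the split and inert cases. The operator $N(s)$ here is genuinely nontrivial --- it does \emph{not} act through the central character --- and working out its action on the Dirichlet series is the main labor. Your sketch does not contain this idea, and without it there is no mechanism that forces the torus sum to collapse to the degree-eight Spin factor.
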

This theorem together with the ramified computation in Section \ref{spin:Ram} shows that one may choose the data in the global integral so that it is equal to the product of $L^S(\pi,\Spin,s)$ and an Archimedean integral $I_\infty(\phi,s)$.

For the second integral, we use the special automorphic function $P_D^\alpha$ on $\GSp_4$ (see Section \ref{spin:global}), defined in \cite{pollackShahKS}.  Denote by $\GL_2 \boxtimes \GSp_4$ the set of elements $(g_1,g_2)$ in $\GL_2 \times \GSp_4$ with $\det g_1 = \nu(g_2)$, where $\nu$ is the similitude.  Then $\GL_2 \boxtimes \GSp_4$ embeds in $G = \GSp_6$.  We consider the global integral
\[I'(\phi,s) = \int_{(\GL_2 \boxtimes \GSp_4)(\Q)Z(\mathbf A)\backslash (\GL_2 \boxtimes \GSp_4)(\mathbf A)} E^{*}(g_1,s)\phi(g) P_D^\alpha(g_2) dg,\]
for a cusp form $\phi$ on $\PGSp_6$. 

This integral also unfolds to the Fourier coefficient $\phi_\chi$, and we show that it represents the partial Spin $L$-function in the same sense as described for $I(\phi,s)$ above.  In fact, the unramified calculation, which is the main theorem of this paper, is the same for both of these integrals.
\begin{theorem} \label{thm:unramcalc}
Let $\lambda$ be as in Theorem \ref{thm:introunramcalc}, and denote by $\iota: \Q_p^\times \times L_p^\times \rightarrow \GSp_6(\Q_p)$ the map given in Definition \ref{def:iota}.  Then
\[\zeta_p(2s)\sum{|t|^{-3}|\ell|^2 \lambda(\iota(t,\ell))|t|^s} = \lambda(1)L(\pi_p,\Spin,s),\]
where the sum extends over all $t \in \mathbf{Q}_p^\times$ and $\ell \in L_p^\times$ satisfying $|t| \leq |N_{L_p/\Q_p}(\ell)| \leq 1$ modulo the action of $\mathbf{Z}_p^\times \times \mathcal O_L^\times$.
\end{theorem}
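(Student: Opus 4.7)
The plan is to follow the Piatetski-Shapiro-Rallis framework for non-unique models (as extended in \cite{bfg} and \cite{gs}): because the $(U_R,\chi)$-model on $\pi_p$ is not one-dimensional, we cannot reduce $\lambda$ to an explicit function. Instead, I will extract from the spherical Hecke algebra action on $v_0$ enough linear relations among the torus values $\lambda(\iota(t,\ell))$ to determine the generating series $c(\lambda,s) := \sum |t|^{-3}|\ell|^{2}\lambda(\iota(t,\ell))|t|^s$ up to the single constant $\lambda(1)$, then identify the resulting rational function in $p^{-s}$ with $L(\pi_p,\Spin,s)/\zeta_p(2s)$.

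I would proceed in three steps. First, parameterize the sum: writing $|t| = p^{-a}$ and picking representatives $\ell_b$ for $L_p^\times/\mathcal{O}_{L_p}^\times$, the constraint $|t|\leq |N_{L_p/\Q_p}(\ell)| \leq 1$ cuts out a triangular region $\Delta\subset \Z_{\geq 0}^2$, making $c(\lambda,s)$ a double power series indexed by $\Delta$. Second, derive Hecke recursions: for each generator $T$ of the spherical Hecke algebra $\mathcal{H}(\GSp_6(\Q_p)//K)$, the identity $\Lambda(\pi(T)v_0) = \widehat{T}(\alpha_\pi)\lambda(1)$, combined with the left coset decomposition of the double coset $KTK$ and the $(U_R,\chi)$-equivariance of $\Lambda$, should produce a linear identity relating shifted values $\lambda(\iota(t',\ell'))$. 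A careful choice of generators (one for each simple factor of the rank-$3$ Hecke algebra, plus the similitude element) should yield a system of recursions that is triangular with respect to a natural partial order on $\Delta$. Third, solve and match: the recursions together with the initial condition at $(a,b)=(0,0)$ express $c(\lambda,s)$ as $\lambda(1)$ times a rational function in $p^{-s}$ and the Satake parameters $\alpha_\pi$ of $\pi_p$, and multiplying by $\zeta_p(2s)$ should produce precisely the $8$-term Euler product associated to the spin representation of the dual group $\mathrm{GSpin}_7(\C)$.

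The main obstacle is the Hecke step: for each coset representative $g$ of a generator of $\mathcal{H}$, one must decompose $g\cdot\iota(t,\ell)$ in the form $u\cdot\iota(t',\ell')\cdot k$ with $u\in U_R$ and $k\in K$, and then use $\Lambda(\pi(u)v)=\chi(u)\Lambda(v)$ to express $\lambda(g\iota(t,\ell))$ as a scalar multiple of $\lambda(\iota(t',\ell'))$. Representatives for which the conjugated character $u\mapsto \chi(\iota(t,\ell)^{-1}u\iota(t,\ell))$ no longer agrees with $\chi$ contribute zero after averaging, but identifying which representatives survive and pairing the surviving ones against the explicit formula for the Satake transform of $T$ is a delicate orbit-bookkeeping problem on $\GSp_6(\Q_p)/K$ restricted to the image of $\iota$. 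In the Gan-Gurevich setting \cite{gg}, where the Fourier coefficient factorizes and the model on $\pi$ effectively becomes one-dimensional, these orbit contributions simplify dramatically; here the calculation must be carried out in a way that is valid for an arbitrary $(U_R,\chi)$-functional, which is the principal technical novelty of the theorem.
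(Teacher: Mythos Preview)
Your outline captures the Piatetski--Shapiro--Rallis philosophy correctly, but Step~2 contains a genuine gap: the Hecke recursions you propose do \emph{not} close up on the lattice $\iota(\Q_p^\times\times L_p^\times)$. Concretely, when you act by a spherical Hecke operator on $v_0$ and evaluate $\Lambda$ at $\iota(t,\ell)$, the coset representatives $\gamma_i$ send $\iota(t,\ell)$ to points whose Levi part lies in $M_R\cong \GL_1\times\GL_1\times\GL_2$ but not in the rank-two torus $\mathrm{im}(\iota)$. For instance, the paper's own Hecke computations (Propositions~\ref{GL2T03}--\ref{TpExtremal2}) show that applying the basic $\GL_2$-type operator $T_p$ to $\lambda$ at $\iota(p^r,1)$ produces $\lambda(\iota(p^r,1)\tau)$, where $\tau=\diag(1,1,p,p,1,p)$ is \emph{not} of the form $\iota(t',\ell')$ modulo $K$. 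Because the $(U_R,\chi)$-model space is genuinely multi-dimensional, these ``exotic'' values $\lambda(\iota(\cdot,\cdot)\tau)$ are new unknowns, not expressible in terms of the $\lambda(\iota(t',\ell'))$. Hence your hoped-for triangular system on $\Delta$ never materializes, and you cannot solve for $c(\lambda,s)$ in terms of $\lambda(1)$ by recursion alone.

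The paper's route around this obstacle is conceptually different. Rather than trying to determine $c(\lambda,s)$ directly, it introduces an auxiliary functional
\[
\lambda\;\longmapsto\;\int_{\GSp_6(\Q_p)}\lambda(g)\,\Delta^s(g)\,dg,\qquad \Delta^s(g)=|\nu(g)|^s\charf_{M_6(\Z_p)}(g),
\]
which by Andrianov's formula equals $\lambda(1)\,N(\omega,s)\,L(\pi_p,\Spin,s-3)$ for an explicit degree-six Hecke polynomial $N(s)$ (not acting through the center). Section~\ref{delta} evaluates this integral by an iterated parabolic decomposition and obtains an expression of the form $\sum_r a_r(\lambda)p^{-rs}$ where each $a_r(\lambda)$ is a specific linear combination of values $\lambda(\iota(p^r,\ell))$ \emph{and} the exotic values $\lambda(\iota(p^{r-1},1)\tau)$. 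Section~\ref{N} then computes $N(s)*D(s)$, where $D(s)$ is (a shift of) your $c(\lambda,s)$; since $N(s)*v_0=N(\omega,s)v_0$, this equals $N(\omega,s)D(s)$ abstractly, but the term-by-term Hecke calculation again produces the same $\tau$-type terms. The two explicit expressions are then matched coefficient by coefficient (Theorems~\ref{thm:deltainert}/\ref{thm:deltasplit} versus Theorems~\ref{thm:inert}/\ref{thm:split}); the $\tau$-terms appear identically on both sides and hence cancel in the comparison. Dividing the resulting identity by the nonzero scalar $N(\omega,s)$ yields the theorem. In short: the paper never solves for individual $\lambda$-values, but proves an equality of two linear functionals on the (multi-dimensional) model space, in which the off-lattice contributions match automatically.
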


It is worth remarking that the integrals $I$ and $I'$ are closely related.  Indeed, integrating over $\GL_{2,L}^*(\mathbf Q)\backslash \GL_{2,L}^*(\mathbf A)$ and integrating against $P_D^\alpha(g)$ are very closely related functionals on the space of automorphic forms on $\GSp_4$.  Nevertheless, it is desirable to have both integrals written down explicitly.  The reason is that, on the one hand, the integral $I(\phi,s)$ is more obviously connected to periods of cusp forms and the geometry of the Siegel six-fold.  On the other hand, the use of the special function $P^\alpha_D(g)$ makes $I'(\phi,s)$ a more flexible construction.

\subsection{Overview of proof} To prove Theorem \ref{thm:unramcalc}, we use a result of Andrianov~\cite{a3}, who explicitly relates $L(\pi_p,\Spin,s)$ to Hecke operators.   Define $\Delta^s(g) = |\nu(g)|^s \charf(g)$, where $\nu$ is the similitude, $|\cdot|$ denotes the $p$-adic norm, and $\charf(g)$ denotes the characteristic function of $M_6(\mathbf Z_p)$.  Let $\omega(g)$ be the spherical function for $\pi_p$, normalized so that $\omega(1) = 1$.  Then Andrianov proves
\[\int_{\GSp_6(\mathbf Q_p)} \omega(g)\Delta^s(g) dg = N(\omega,s)L(\pi_p,\Spin,s-3)\]
where $N(\omega,s) = (N(s) * \omega)(1)$, and 
\begin{eqnarray*} N(s) &=&1 - p^2(T_{2,3} + (p^4 + p^2 + 1)T_{3,3})p^{-2s} + p^4(1+p)T_{0,3}T_{3,3}p^{-3s}\\
& &- p^7T_{3,3}(T_{2,3} + (p^4 + p^2 + 1)T_{3,3})p^{-4s}+p^{15}T_{3,3}^3p^{-6s}.
\end{eqnarray*}
Here the $T_{i,3}$ are the usual Hecke operators on $\GSp_6$. (See Section \ref{N}.)  Due to the bi-invariance of $\Delta$ by $\GSp_6(\mathbf{Z}_p)$, one has the identity
\[\int_{\GSp_6(\mathbf Q_p)} \lambda(g)\Delta^s(g) dg =\lambda(1)\int_{\GSp_6(\mathbf Q_p)} \omega(g)\Delta^s(g)dg.\]
Thus, to prove the theorem, it suffices to show that
\begin{equation} \label{eqn:maineq}\int_{\GSp_6(\mathbf Q_p)} \lambda(g)\Delta^s(g) dg = \zeta_p(2s-6)\sum{|t|^{-6}|\ell|^2 \Lambda(\pi(\iota(t,\ell))N(s)*v_0)|t|^s}.\end{equation}
We explicitly compute both sides and see that they are identical.

Note that in other integral representations involving non-unique models, there is an operator $N$ as above, but those $N$ act purely via the central character, and so are essentially trivial.  Here, the operator $N$ is highly nontrivial.  This is the sense in which our approach generalizes that of Piatetski-Shapiro and Rallis.  The paper \cite{gs} of Gurevich-Segal uses a similar strategy.

\subsection{Applications}
It is expected that the integral $I(\phi,s)$ will play an important role in answering arithmetic questions about the Siegel modular six-fold.  For instance, this is strongly suggested by the papers \cite{llz} and \cite{grossSavin}.  Although the motivic applications of the relation between $I(\phi,s)$ and the Spin $L$-function suggested by these papers remain currently out of reach, we may refine Theorem \ref{mainThm} to give more precise information about the poles of the partial Spin $L$-function of $\pi$ and their relation with the exceptional theta correspondence between $\mathrm{G}_2$ and $\PGSp_6$.  This extends the results of Gan-Gurevich \cite{gg}, and relies on their Archimedean calculation as well as earlier works of Langlands \cite{langlandsEP}, Shahidi \cite{shahidi}, and Gan \cite{gan}.

To set up this refinement, suppose that $\pi$ is a cuspidal automorphic representation on $\PGSp_6(\mathbf{A})$.  Denote by $U_P$ the unipotent radical of the Siegel parabolic of $\PGSp_6$, so that $U_P$ is abelian and consists exactly of the matrices $\mm{1}{X}{}{1}$ inside of $\PGSp_6$.  The characters $U_P(\Q)\backslash U_P(\mathbf{A}) \rightarrow \C^\times$ are indexed by $3\times 3$ symmetric matrices $T$ in $M_3(\Q)$; write $\chi_T$ for the character corresponding to $T$.  If $\phi$ is in the space of $\pi$, and $T$ is a $3\times 3$ symmetric matrix, one can consider the Fourier coefficient
\[ \phi_T(g) = \int_{U_P(\Q)\backslash U_P(\mathbf{A})}{\chi_T^{-1}(u)\phi(ug)\,du}.\]
One says that $\pi$ supports the $T$ Fourier coefficient if there exists $\phi$ in the space of $\pi$ so that $\phi_T$ is not identically zero.  We say $\pi$ supports a Fourier coefficient of \emph{rank $r$} if there exists $T$ of rank $r$ so that $\pi$ supports the $T$ Fourier coefficient.  For example, it is easy to verify that $\pi$ supports a rank one Fourier coefficient if and only if $\pi$ is globally generic.  Additionally, one checks that $\pi$ supports a rank two Fourier coefficient if and only if there is an etale quadratic extension $L$ of $\Q$ so that $\pi$ supports the coefficient (\ref{phiChi}).

If $\pi$ is globally generic, the analytic properties of the partial Spin $L$-function are well-understood, both from Rankin-Selberg integrals \cite{bg,vo} and from the Langlands-Shahidi method \cite{shahidi}.  If $\pi$ is not generic, then the method of Langlands \cite{langlandsEP} and Shahidi \cite[Theorem 6.1]{shahidi} still gives the meromorphic continuation of $L^S(\pi,\Spin,s)$, but does not provide other desired analytic properties of the $L$-function, such as the finiteness of poles or functional equation.  Combining Theorem \ref{mainThm} with the known meromorphic continuation of $L^S(\pi,\Spin,s)$, we deduce the meromorphic continuation of the Archimedean integral.  Proposition 12.1 of \cite{gg} then shows that for suitable data, the Archimedean integral $I_\infty(\phi,s)$ can be made nonvanishing at arbitrary $s=s_0$.  We deduce the finiteness of the poles of the partial Spin $L$-function.

Furthermore, Gan \cite[Theorem 1.2]{gan} and Gan-Gurevich \cite[Proposition 5.2]{gg} connect the periods of cusp forms in the space of $\pi$ over $H(\Q)Z(\mathbf{A})\backslash H(\mathbf{A})$ to the exceptional theta correspondence between $\mathrm{G}_2$ and $\PGSp_6$.  The period over this domain is calculated by the residue at $s=1$ of the Rankin-Selberg integral $I(\phi,s)$.  In particular, the combination of Theorem \ref{mainThm} below, Propositions 5.2 and 12.1 of Gan-Gurevich \cite{gg}, and Theorem 6.1 of Shahidi \cite{shahidi} yields the following result.

\begin{theorem}\label{simplePole} Suppose $\pi$ is a cuspidal automorphic representation of $\PGSp_{6/\mathbf{Q}}$ that supports a rank two Fourier coefficient.  Then the partial Spin $L$-function of $\pi$, $L^S(\pi,\Spin,s)$,  has meromorphic continuation in $s$, is holomorphic outside $s=1$, and has at worst a simple pole at $s=1$.  If $\mathrm{Res}_{s=1}L^S(\pi,\Spin,s) \neq 0$, then $\pi$ lifts to the split $\mathrm{G}_2$ under the exceptional theta correspondence. \end{theorem}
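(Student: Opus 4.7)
The plan is to exploit the global integral $I(\phi,s)$ from Gan-Gurevich, whose meromorphic properties are inherited from the Eisenstein series $E^*(g_1,s)$ on $\GL_2$. Since $\pi$ supports a rank two Fourier coefficient, the remarks recalled just before the theorem give an etale quadratic extension $L = \Q(\sqrt{D})$ and a cusp form $\phi$ in the space of $\pi$ with $\phi_\chi \not\equiv 0$, so the unfolded form of $I(\phi,s)$ is not identically zero. Choosing local test data at the ramified finite primes using Section \ref{spin:Ram} and combining with Theorem \ref{thm:introunramcalc} at the good primes, we arrange the global integral to factor as
$$I(\phi,s) = L^S(\pi,\Spin,s) \cdot I_\infty(\phi;s) \cdot \prod_{p \in S \setminus \{\infty\}} I_p(\phi_p;s),$$
where $S$ is a finite set containing the Archimedean place and each $I_p$ is a rational function of $p^{-s}$.

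For the pole structure of $L^S(\pi,\Spin,s)$, I would combine three inputs: (i) the normalized Eisenstein series $E^*$ is meromorphic with at worst a simple pole at $s=1$, so $I(\phi,s)$ inherits the same property; (ii) Theorem 6.1 of Shahidi \cite{shahidi} (together with \cite{langlandsEP}) supplies the meromorphic continuation of $L^S(\pi,\Spin,s)$; and (iii) Proposition 12.1 of \cite{gg} provides Archimedean test data making $I_\infty(\phi;s_0) \neq 0$ at any prescribed $s_0$. Were $L^S(\pi,\Spin,s)$ to have a pole of order $m \geq 1$ at some $s_0 \neq 1$, or of order $m \geq 2$ at $s_0=1$, then choosing $\phi_\infty$ per (iii) and adjusting the ramified finite-place data so that no $I_p(\phi_p;\,\cdot\,)$ vanishes at $s_0$ would force $I(\phi,s)$ to have a pole of order $m$ at $s_0$, contradicting (i).

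For the theta lift assertion, since the residue of $E^*(g_1,s)$ at $s=1$ is a nonzero constant,
$$\mathrm{Res}_{s=1} I(\phi,s) = c \int_{H(\Q)Z(\mathbf A) \backslash H(\mathbf A)} \phi(g)\, dg$$
for some nonzero $c$. If $\mathrm{Res}_{s=1} L^S(\pi,\Spin,s) \neq 0$, then selecting data so that the Archimedean and ramified finite local factors are all nonvanishing at $s=1$, we obtain a nonzero $H$-period for some $\phi$ in the space of $\pi$. By Gan's Theorem 1.2 of \cite{gan} and Proposition 5.2 of \cite{gg}, the nonvanishing of this period characterizes the image of the exceptional theta correspondence from split $\mathrm{G}_2$ to $\PGSp_6$, so $\pi$ is a theta lift from $\mathrm{G}_2$.

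The principal obstacle is executing the simultaneous local nonvanishing argument: to rule out a pole of $L^S$ at $s_0$, or to conclude nonvanishing of the $H$-period from a residue, one must ensure that $I_\infty$ and every ramified $I_p$ avoid a zero at the same $s_0$. The Archimedean input is the content of Proposition 12.1 of \cite{gg}; the ramified finite-place flexibility must be extracted from the ramified computation in Section \ref{spin:Ram}, where the dependence of $I_p(\phi_p;s)$ on the choice of $\phi_p$ governs whether spurious cancellations can occur.
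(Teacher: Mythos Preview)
Your approach is essentially the paper's, but you are over-complicating the ramified finite places. Proposition~\ref{prop:ram} does not produce local factors $I_p(\phi_p;s)$ that are rational functions in $p^{-s}$; it makes the ramified local integral equal to the constant $\ell(v_0)$, independent of $s$. Consequently Theorem~\ref{mainThm} gives the clean identity $I(\phi_1;s) = I_\infty(\phi;s)\,L^S(\pi,\Spin,s)$ with no ramified finite factors at all, and your ``principal obstacle'' about simultaneous nonvanishing at bad finite primes disappears entirely. (This matters also because the model is non-unique, so the global unfolded integral does not factor as an Euler product in the way your displayed formula suggests; the correct statement is exactly Theorem~\ref{mainThm}.) With that correction, the remaining logic---Shahidi for meromorphic continuation of $L^S$, Proposition~12.1 of \cite{gg} for Archimedean nonvanishing, and the residue/$H$-period/theta-lift chain via \cite{gan} and \cite[Proposition~5.2]{gg}---matches the paper exactly.
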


The reason we restrict to the ground field $\mathbf{Q}$ in Theorem \ref{simplePole} and the other theorems in this paper is because the result of Andrianov \cite{a3} that we use (and which is reproven in \cite{panchishkinVankov}) has only been verified for this ground field.  It would be a straightforward but tedious exercise to check that the results of \cite{a3} and \cite{panchishkinVankov}, and then Theorems \ref{thm:unramcalc} and \ref{simplePole}, extend to arbitrary global fields.

\subsection{Outline} The outline of the paper is as follows.  In Section \ref{spin:global} we define the various groups we use, recall some definitions and results pertaining to the special function $P_D^\alpha$, and give the unfolding of the global integrals.  In Section \ref{delta} we explicitly compute the left hand side of the equality (\ref{eqn:maineq}) and in Section \ref{N} we compute the right-hand side.  The ramified integral is controlled in Section \ref{spin:Ram}.

$\textrm{ }$

\noindent \emph{Acknowledgments}:  We would like to thank Christopher Skinner for many helpful conversations during the course of this research and for his constant encouragement.  We have also benefited from conversations with Wee Teck Gan, Nadya Gurevich, Erez Lapid, Peter Sarnak, and Xiaoheng Wang. 
\section{Global constructions}\label{spin:global}
\subsection{Groups and embeddings} \label{subsec:groupdefs}
For our symplectic form, we use the nonstandard matrix
\[\left(\begin{array}{cccc} & & &1\\ & & 1_2 & \\ & -1_2& & \\ -1& & & \end{array}\right).\]
We denote by $J_4:=\left(\begin{array}{cc} & 1_2 \\ -1_2 & \end{array}\right)$ the standard matrix defining the group $\GSp_4$.  Recall the group $\GL_2 \boxtimes \GSp_4$ from the introduction.  We embed $\GL_2 \boxtimes \GSp_4$ inside $\GSp_6$ via the map
\[\left(\left(\begin{array}{cc} a&b\\c&d\end{array}\right), \left(\begin{array}{cc} a'&b'\\c'&d'\end{array}\right)\right) \mapsto \left(\begin{array}{cccc} a& & &b\\ & a'& b'& \\ & c'&d' & \\ c& & &d \end{array}\right).\]
Here $a', b', c', d'$ are $2 \times 2$ matrices. If $W_1$ and $W_2$ are subgroups of $\GL_2$ and $\GSp_4$ respectively, we denote by $W_1 \boxtimes W_2$ the subgroup of elements $(w_1, w_2)$ in $\GL_2 \boxtimes \GSp_4$ with $w_1 \in W_1, w_2 \in W_2$.

With this choice of form, elements of the Levi of the parabolic $R$ from the introduction are of the form
\begin{equation}\left(\begin{array}{cccc} w& & & \\ &x & & \\ & &y& \\ & & &z \end{array}\right),\label{eqn:LeviR}\end{equation}
where $x$ and $y$ are $2 \times 2$ matrices and $x = wz{}^ty^{-1}$.  Elements of the unipotent radical $U_R$ of $R$ have the form
\[\left(
\begin{array}{cccc}
 1&   v &   r  & * \\
  & 1_2 &  u   & {}^tr -u{}^tv \\
  &     &  1_2 & -{}^tv \\
  &     &     &1 
\end{array}
\right)\]
with $u = {}^tu$ and no condition on the element $*$.

We will need several abelian subgroups of $U_R$. Define
\[N_V = \set{n_v \in \GSp_6 \left|n_v=\(\begin{array}{ccc|ccc} 1 & * & * &  &  &  \\ & 1 & &  &  &  \\  &  & 1 &  &  &  \\ \hline  &  &  & 1 &  & * \\  &  &  &  & 1 & * \\  &  &  &  &  & 1 \end{array}\right.\)},\]
\[N_U = \set{n_u \in \GSp_6 \left|n_u=\(\begin{array}{ccc|ccc} 1 & & &  &  &  \\ & 1 & & * & * &  \\  &  & 1 & * & * &  \\ \hline  &  &  & 1 &  &  \\  &  &  &  & 1 &  \\  &  &  &  &  & 1 \end{array}\right.\)},\]
\[\textrm{and }N_{U'} = \set{n_{u'} \in \GSp_6 \left|n_{u'}=\(\begin{array}{ccc|ccc} 1 & & & * & * & * \\ & 1 & &  &  & * \\  &  & 1 &  &  & * \\ \hline  &  &  & 1 &  &  \\  &  &  &  & 1 &  \\  &  &  &  &  & 1 \end{array}\right.\)}.\]
Typical elements of these groups are written $n_v, n_u,$ and $n_{u'}$, respectively.  If $n_v$ is an element of $N_V$, we write $v$ for the corresponding $1 \times 2$ matrix, and we write $V$ for the abelian group of these $1\times 2$ matrices.  Similarly, we write $u$ for the $2 \times 2$ symmetric matrix corresponding to an element $n_u$ of $N_U$, and we write $U$ for the group of these symmetric matrices. Sometimes we write $n(u)$ in place of $n_u$.  If $u =\mm{u_{11}}{u_{12}}{u_{12}}{u_{22}}$, $v = (v_1, v_2)$, and $n \in U_R$ with $n \equiv n_v n_u \pmod{[U_R,U_R]}$, we define $\chi (n)= \psi(v_1 - Du_{11}+u_{22})$. 

We describe the embedding of $\GL_{2,L}^*$ into $\GSp_4$.  Consider $(L^2, \langle \;,\; \rangle_L)$, the two-dimensional vector space over $L$ with its standard alternating bilinear form.  Define the $\mathbf{Q}$-bilinear form $\langle \;,\; \rangle$ via $\langle x,y\rangle = \tr_{L/\mathbf Q}\left(\langle x, y \rangle_L\right)$.  As $\GL_{2,L}^*$ is the group preserving $\langle \;,\;\rangle_L$ up to multiplication by $\mathbf Q^{\times}$, one obtains an inclusion $\GL_{2,L}^* \subseteq \GSp\left(\langle \; ,\; \rangle\right)$. If the basis of $L^2$ is $\{b_1,b_2\}$, then the basis
\[\frac{1}{2\sqrt{D}}b_1,\blank \frac{1}{2}b_1,\blank \sqrt{D}b_2, \blank b_2\]
gives an isomorphism $\GSp\left(\langle \; ,\; \rangle\right) \cong \GSp_4$.  We use this identification throughout.  Alternatively, with this choice of basis, scalar multiplication by $\sqrt{D}$ is given by the matrix
\[\left(\begin{array}{cc|cc} & 1& & \\ D& & & \\ \hline & & &D \\ & &1 & \end{array}\right),\]
and $\GL_{2,L}^*$ is the centralizer of this matrix inside $\GSp_4$.  Here and after we assume $\GSp_4$ acts on the right of its defining four-dimensional representation.  The group $H$ is embedded in $\GSp_6$ via the composite of the embeddings $\GL_{2,L}^* \subset \GSp_4$ and $\GL_2 \boxtimes \GSp_4 \subset \GSp_6$.

The Levi of the upper triangular Borel of $\GL_{2,L}^*$ is isomorphic to $\GL_{1} \times \GL_{1,L}$.  Suppose $\ell = x + y\sqrt{D} \in L^\times$ and $t \in \mathbf{Q}^\times$.  Then the embedding of $\GL_{2,L}^*$ into $\GSp_4$ sends
\begin{equation}\label{ML}\left(\begin{array}{c|c} t\ell^{-1} &  \\ \hline & \ell \end{array} \right)
\mapsto
\left(\begin{array}{c|c} t{}^tm_\ell^{-1} &  \\ \hline & m_\ell \end{array}\right),\end{equation}
with $m_\ell = \left(\begin{array}{cc} x & Dy \\ y & x \end{array} \right),$
and \[\left(\begin{array}{c|c} 1 & \ell \\ \hline  & 1 \end{array} \right)
\mapsto
\left(\begin{array}{c|c} 1 & n_\ell \\ \hline & 1 \end{array} \right),\]
with $n_\ell = \frac{1}{2}\left(\begin{array}{cc} x/D & y \\ y & x \end{array} \right).$

\begin{definition} \label{def:iota}

We define the map $\iota: \GL_{1} \times \GL_{1,L} \rightarrow \GSp_6$ via
\[(t,\ell) \mapsto
\left(
\begin{array}{cccc}
 t&    &       &  \\
  & t{}^tm_\ell^{-1} &     &  \\
  &     &  m_\ell &  \\
  &     &     &1 
\end{array}
\right).\]
Sometimes we will drop the $\iota$ from the notation and just write $(t, \ell)$ for $\iota(t,\ell)$ if no confusion is possible. Denote by $N_L$ the unipotent radical of the Borel of $\GL_{2,L}^*$.  Note that $\chi$ is trivial on the image of $N_L$ in $\GSp_6$ (embedded via $n \mapsto (1,n) \in H \subset \GSp_6$).

\end{definition}

We write $| \cdot |$ for the absolute value on $\Q_p$, normalized so that $|p| = p^{-1}$.  For an element $\ell$ of $L_p$, we also write $|\ell|$ for its absolute value, normalized so that $|\ell| = |N_{L_p/\Q_p}(\ell)|$, where $N_{L_p/\Q_p}(\ell)$ is the norm map from $L_p$ to $\Q_p$.  We will often write $N(\ell)$ for $N_{L_p/\mathbf{Q}_p}(\ell)$.  We record for future use some modulus characters.
\begin{lemma}\label{modulusChars} In the notation above, $\delta_R = |\nu|^6|z|^{-6}|\det(y)|^{-3}$, where $\nu$ is the similitude.  The modulus character for the Borel of $\GL_{2,L}^*$ is $|t|^3|\ell|^{-2}.$  If $P_4$ denotes the Siegel parabolic on $\GSp_4$, then $\delta_{P_4} = |\nu|^3|\det(y)|^{-3}$.
\end{lemma}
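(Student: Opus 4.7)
The plan is to compute each modulus character directly from the formula $\delta_P(m) = |\det\mathrm{Ad}(m)|_{\mathrm{Lie}(U_P)}|$, by identifying linear coordinates on the unipotent radical and reading off the conjugation action of the Levi.

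For $\delta_R$, I would use the explicit coordinates on $U_R$ already given in the excerpt: a $1\times 2$ row $v$, a $1\times 2$ row $r$, a symmetric $2\times 2$ block $u$, and a single scalar entry at position $(1,6)$, for a total of $2+2+3+1=8$ dimensions. Writing a Levi element as $m=\mathrm{diag}(w,x,y,z)$ subject to the similitude constraint $x=\nu y^{-T}$ with $\nu=wz$, direct matrix multiplication shows that conjugation sends $v\mapsto wvx^{-1}$, $r\mapsto wry^{-1}$, $u\mapsto xuy^{-1}=\nu y^{-T}uy^{-1}$, and $*\mapsto wz^{-1}*$. The only nonobvious piece is the determinant of $u\mapsto y^{-T}uy^{-1}$ on the three-dimensional space of symmetric $2\times 2$ matrices; diagonalizing $y$ with eigenvalues $\alpha,\beta$ shows that the three weights are $\alpha^{-2},(\alpha\beta)^{-1},\beta^{-2}$, whose product is $\det(y)^{-3}$. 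Multiplying the four contributions and substituting $|\det x|=|\nu|^2|\det y|^{-1}$ collapses the product to $|w|^6|\det y|^{-3}=|\nu|^6|z|^{-6}|\det y|^{-3}$.

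For the Borel of $\GL_{2,L}^*$, the unipotent is $\{\mb{1}{z}{}{1}:z\in L\}$, viewed as a $2$-dimensional $\mathbf Q$-vector space. Conjugation by the Levi element $\mathrm{diag}(t\ell^{-1},\ell)$ sends $z\mapsto t\ell^{-2}z$, so the relevant determinant is $N_{L/\mathbf Q}(t\ell^{-2})$. Taking $\mathbf Q$-absolute values and expressing the result in terms of $|\cdot|$ on $\mathbf Q$ and the $L$-normalized absolute value $|\cdot|_L$ gives the claimed formula. For $\delta_{P_4}$, the Siegel unipotent is exactly the symmetric $2\times 2$ matrices $u$, and a Levi element $\mathrm{diag}(x,y)$ satisfies $x=\nu y^{-T}$; the same calculation as for the $u$-piece of $U_R$ yields $|\nu|^3|\det y|^{-3}$.

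I do not expect any serious obstacles: all three computations are a matter of writing down conjugation actions by block matrix multiplication. The only real point of care is the linear-algebra lemma that the action $u\mapsto AuA^T$ on symmetric $2\times 2$ matrices has determinant $\det(A)^3$ (used both in the $\delta_R$ and $\delta_{P_4}$ calculations), together with consistent bookkeeping of the similitude constraint $x=\nu y^{-T}$ when simplifying the product of the four factors in $\delta_R$.
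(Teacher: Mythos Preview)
The paper states this lemma without proof, so there is nothing to compare against; your direct computation via $\delta_P(m)=|\det\mathrm{Ad}(m)|_{\mathrm{Lie}(U_P)}|$ is exactly the expected approach. Your arguments for $\delta_R$ and $\delta_{P_4}$ are correct, including the linear-algebra lemma that $u\mapsto AuA^{T}$ on symmetric $2\times 2$ matrices has determinant $(\det A)^3$.

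There is, however, a genuine gap in the middle computation. You correctly find that conjugation by $\mathrm{diag}(t\ell^{-1},\ell)$ sends $z\mapsto t\ell^{-2}z$ on $N_L\cong L$, so the relevant determinant is $N_{L/\mathbf Q}(t\ell^{-2})$. But since $t\in\mathbf Q^\times$ one has $N_{L/\mathbf Q}(t)=t^2$, hence
\[
|N_{L/\mathbf Q}(t\ell^{-2})|=|t|^2\,|\ell|_L^{-2},
\]
not $|t|^3|\ell|_L^{-2}$. Your last sentence (``gives the claimed formula'') glosses over this mismatch. The exponent $3$ in the paper is in fact the modulus character of the Borel of $H=\GL_2\boxtimes\GL_{2,L}^*$, evaluated at the torus element $\bigl(\mathrm{diag}(t,1),\,\mathrm{diag}(t\ell^{-1},\ell)\bigr)$: the unipotent radical is then $N_2\times N_L$, and the extra factor $|t_1/t_2|=|t|$ from the $\GL_2$ component accounts for the discrepancy. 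This is precisely the modulus character that enters the Iwasawa decomposition of the unfolded integral over $N(\mathbf Q_p)Z(\mathbf Q_p)\backslash H(\mathbf Q_p)$ and yields the factor $|t|^{-3}|\ell|^2$ appearing later in the paper. So either correct your final step to $|t|^2|\ell|_L^{-2}$ and flag the lemma's phrasing, or redo the computation for the Borel of $H$ to obtain $|t|^3|\ell|_L^{-2}$.
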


\subsection{Results on $P_D^\alpha$}
In this section we recall the definition of the function $P_D^\alpha$ on $\GSp_4$ from \cite{pollackShahKS} and a result related to it.

First, let $W_4$ be the symplectic space upon which $\GSp_4$ acts on the right and write $\wedge^2_0(W_4) = \ker\{\wedge^2 W_4 \rightarrow \nu\}$.  We set $V_5 = \wedge^2_0(W_4) \otimes \nu^{-1}$.  For a nonzero integer $D$ denote $v_D = De_1 \wedge f_2 + e_2 \wedge f_1$ in $V_5$.  If $\alpha$ is a Schwartz-Bruhat function on $V_5(\mathbf A)$, then we set
\[P_D^\alpha(g) = \sum_{\delta \in \GL_{2,L}^*(\Q)\backslash \GSp_4(\Q)}{\alpha(v_D\delta g)}.\]
This sum is well-defined because $\GL_{2,L}^*$ is the stabilizer of $v_D$ in $\GSp_4$.

When $p$ is split in $L$, we fix two uniformizers $\pi_1$, $\pi_2$ above $p$ such that $p= \pi_1\pi_2$, as follows.  Let $h$ be a fixed square root of $D$ inside $\mathbf Z_p$.  We define $\pi_1$ to be the element of $L_p^{\times}$ that maps to $p$ under the map $L_p\rightarrow \mathbf Q_p$ determined by $\sqrt{D}\mapsto h$ and maps to $1$ under the map determined by $\sqrt{D}\mapsto -h$.  The element $\pi_2$ is defined in the same way, with $h$ and $-h$ reversed. The following lemma will be useful in the sequel.
\begin{lemma}\label{lem:hlem} Suppose $p \nmid 2D$ is split in $L$.  If $\ell = \pi_1^k$, then $m_\ell$ is
\[\left(\begin{array}{cc} \frac{p^k+1}{2}&\frac{p^k-1}{2}h \\ \frac{p^k-1}{2}h^{-1}&\frac{p^k+1}{2} \end{array}\right),\]
and this matrix is right $\GL_2(\mathbf Z_p)$ equivalent to
\[\left(\begin{array}{cc} p^k&-h \\ &1 \end{array}\right).\]
Switching $h$ with $-h$ gives the analogous statement for $\pi_2$.
\end{lemma}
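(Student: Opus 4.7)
The plan is simple: read off the expansion $\pi_1^k = x + y\sqrt{D}$ from the defining property of $\pi_1$, substitute into the formula $m_\ell = \mm{x}{Dy}{y}{x}$ from (\ref{ML}), and then reduce to the claimed triangular form by two explicit column operations via elementary matrices in $\GL_2(\Z_p)$. The assertion for $\pi_2$ will follow by symmetry.

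For the coefficients, the two $\Q_p$-algebra maps $L_p \to \Q_p$ determined by $\sqrt{D}\mapsto \pm h$ send $\pi_1^k$ to $p^k$ and to $1$ respectively, so $x + yh = p^k$ and $x - yh = 1$, yielding $x = (p^k+1)/2$ and $y = (p^k-1)/(2h)$. Substituting these into $\mm{x}{Dy}{y}{x}$ and using $Dy = h^2 \cdot (p^k-1)/(2h) = h(p^k-1)/2$ recovers the first matrix in the statement.

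To reach the triangular form, I would first right multiply by $\mm{1}{-h}{0}{1}\in\GL_2(\Z_p)$, which performs the column operation $C_2 \mapsto C_2 - hC_1$ and produces $\mm{(p^k+1)/2}{-h}{(p^k-1)/(2h)}{1}$. Right multiplication by $\mm{1}{0}{-(p^k-1)/(2h)}{1}\in\GL_2(\Z_p)$ then clears the $(2,1)$-entry and turns the $(1,1)$-entry into $p^k$, producing the desired $\mm{p^k}{-h}{0}{1}$. Both elementary matrices lie in $\GL_2(\Z_p)$ because the hypothesis $p\nmid 2D$ forces $h,2\in\Z_p^\times$ and $p^k - 1 \in \Z_p$ in the range of interest.

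No serious obstacle is expected: the entire lemma reduces to solving a $2\times 2$ linear system and performing two column clearings, with the only point worth verifying being the $p$-integrality of $(p^k-1)/(2h)$, which is transparent under $p\nmid 2D$. The $\pi_2$ claim then requires no further argument, since by definition $\pi_2$ is obtained from $\pi_1$ by the substitution $h\leftrightarrow -h$, and the proof above is entirely functorial in this choice.
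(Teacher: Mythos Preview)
Your argument is correct and follows exactly the paper's approach: solve $x+hy=p^k$, $x-hy=1$ to get the first matrix, then reduce by elementary column operations. The paper's own proof merely says ``a simple computation'' for the second part, whereas you have written out the two column operations explicitly; both elementary matrices you use lie in $\GL_2(\Z_p)$ since $p\nmid 2D$ forces $2,h\in\Z_p^\times$ and $p^k-1\in\Z_p$ for $k\ge 0$, so there is no gap.
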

\begin{proof} For $\pi_1^k$, we must solve the equations $x + hy = p^k$ and $x-hy =1$ for $x$ and $y$, giving the first part.  The second part follows from a simple computation.  Switching $h$ and $-h$ gives the lemma for $\pi_2$.
\end{proof}

Let $U_{P,4}$ be the unipotent radical of the Siegel parabolic on $\GSp_4$, and by abuse of notation we write $\chi$ again for its restriction from $U_R$ to $U_{P,4}$.  Define
\[\alpha_\chi(g) = \int_{N_L(\mathbf A)\backslash U_{P,4}(\mathbf A)}\chi(u)\alpha(v_Dug)\,du = \prod_v{\alpha_\chi^v(g)}.\]
The following proposition computes $\alpha_\chi^v$ almost everywhere.
\begin{proposition}\label{alphachi} Denote by $T_L$ the torus of $\GL_{2,L}^*$, and $K_M$ the intersection of the Levi of the Siegel parabolic with $\GSp_4(\mathbf Z_p)$.  Suppose that $p$ does not divide $2D$, and that $\alpha^p$ is the characteristic function of $V_5(\mathbf Z_p)$.  If $m$ is in the Levi of the Siegel parabolic, then $\alpha_\chi^p(m) = 0$ if $m$ is not in $T_LK_M$.  If $m \in T_L$, then, in notation of (\ref{ML}), $\alpha_\chi^p(m) = |t||\ell|^{-1}$ when $|t| \leq |\ell|$ and $\alpha_\chi^p(m) = 0$ if $|t| > |\ell|$.
\end{proposition}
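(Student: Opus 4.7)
My approach is a direct calculation: parameterize $N_L\backslash U_{P,4}$ by a single variable, compute the action on $V_5$, and reduce to a linear-algebra condition over $\mathbf{Z}_p$. Comparing $n_\ell=\frac{1}{2}\mm{x/D}{y}{y}{x}$ with a general symmetric $u=\mm{u_{11}}{u_{12}}{u_{12}}{u_{22}}$ in $U_{P,4}$, the subgroup $N_L$ is cut out by the equation $u_{22}=Du_{11}$, so $\xi:=Du_{11}-u_{22}$ is a coordinate on $N_L\backslash U_{P,4}$. Taking the section $u_\xi$ with $u_{11}=u_{12}=0,\,u_{22}=-\xi$, direct calculation in $\wedge^2 W_4$ (using the right action of $U_{P,4}$: $e_i\mapsto e_i+\sum_j X_{ij}f_j$, $f_j\mapsto f_j$) yields $v_D u_\xi=v_D+\xi(f_1\wedge f_2)$, while $\chi(u_\xi)=\psi(-\xi)$.

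For $m=\diag(A,\lambda(A^T)^{-1})$ in the Siegel Levi (with $\lambda=\nu(m)$), I compute the action on $V_5=\wedge^2_0 W_4\otimes\nu^{-1}$ componentwise via $v\cdot m|_{V_5}=\lambda^{-1}\,v\cdot m|_{\wedge^2}$. Since $v_D\cdot m$ expands only in the four basis vectors $e_i\wedge f_j$, and $f_1\wedge f_2$ is a $\det E$-eigenvector under the Levi, the $\xi$ contribution decouples. A rearrangement using $E=\lambda(A^T)^{-1}$ rewrites the four $e_i\wedge f_j$-coefficients as the entries of the matrix $BJ_DB^{-1}$, where $B=A^T$ and $J_D=\mm{0}{D}{1}{0}$, while the $f_1\wedge f_2$-coefficient is $\xi\lambda/\det A$. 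The condition $v_D u_\xi m\in V_5(\mathbf{Z}_p)$ thus splits into (i) $BJ_DB^{-1}\in M_2(\mathbf{Z}_p)$ (depending only on $m$) and (ii) $\xi\lambda/\det A\in\mathbf{Z}_p$ (depending only on $\xi$). Granted (i), integrating $\psi(-\xi)$ against the indicator of (ii) gives $|\lambda/\det A|^{-1}$ when $|\lambda/\det A|\geq 1$ and $0$ otherwise.

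The main step is identifying condition (i) with $m\in T_L K_M$. Since $p\nmid 2D$, the ring $R:=\mathbf{Z}_p[J_D]=\mathbf{Z}_p[X]/(X^2-D)$ is a maximal order in $L_p$ (isomorphic to $\mathbf{Z}_p\times\mathbf{Z}_p$ if $p$ splits and to $\mathcal{O}_{L_p}$ if $p$ is inert). For any $N\in M_2(\mathbf{Z}_p)$ of characteristic polynomial $X^2-D$, the module $\mathbf{Z}_p^2$ on which $X$ acts via $N$ is rank-one torsion-free, hence free, over $R$; an $R$-generator $v$ provides the $\mathbf{Z}_p$-basis $\{v,Nv\}$ in which $N$ has matrix $J_D$, so $N=UJ_DU^{-1}$ for some $U\in\GL_2(\mathbf{Z}_p)$. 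Combined with the fact that the $\GL_2(\mathbf{Q}_p)$-centralizer of $J_D$ equals $m_{L_p^\times}$, this shows that (i) is equivalent to $B\in\GL_2(\mathbf{Z}_p)\,m_{L_p^\times}$, i.e., $A\in m_{L_p^\times}^T\GL_2(\mathbf{Z}_p)$, which unwinds via (\ref{ML}) precisely to $m\in T_L K_M$. This module-theoretic identification is the main substantive input. With it in hand the proposition follows: for $m\in T_L$ with parameters $(t,\ell)$, one has $B=tm_{\ell^{-1}}$, which commutes with $J_D$, so $BJ_DB^{-1}=J_D\in M_2(\mathbf{Z}_p)$ and $|\lambda/\det A|=|N(\ell)/t|=|\ell|/|t|$, yielding $\alpha_\chi^p(m)=|t|/|\ell|$ when $|t|\leq|\ell|$ and $0$ otherwise; for $m$ in the Levi but outside $T_L K_M$, condition (i) fails and the integrand is identically zero.
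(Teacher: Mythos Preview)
Your proof is correct and in fact more self-contained than the paper's own proof, which simply cites \cite[Proposition 4.3]{pollackShahKS} together with Lemma~\ref{lem:hlem}. The approaches differ in spirit: the paper's cited argument apparently proceeds via explicit coset representatives and a split/inert case analysis (hence the appeal to Lemma~\ref{lem:hlem} for the explicit form of $m_{\pi_i^k}$), whereas your key step identifies the integrality condition $BJ_DB^{-1}\in M_2(\mathbf{Z}_p)$ with membership in $T_LK_M$ uniformly, via the observation that any integral $N$ with characteristic polynomial $X^2-D$ makes $\mathbf{Z}_p^2$ a free rank-one module over the maximal order $\mathbf{Z}_p[X]/(X^2-D)$ (using $p\nmid 2D$). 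This module-theoretic argument is cleaner and handles the split and inert cases simultaneously. One small point you might make explicit is that the quotient Haar measure on $N_L\backslash U_{P,4}$ agrees with the standard measure $d\xi$ on $\mathbf{Q}_p$ under your linear coordinate $\xi=Du_{11}-u_{22}$, but this is routine.
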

\begin{proof} This is \cite[Proposition 4.2]{pollackShahKS} combined with Lemma \ref{lem:hlem}.
\end{proof}

\subsection{Unfolding}\label{subsec:unfolding}
We define $E(g,s)$ by
\[E(g,s) = \sum_{\gamma \in B(\mathbf Q)\backslash \GL_2(\mathbf Q)}{f(\gamma g,s)},\]
where $f \in \ind(\delta_B^s)$, $\delta_B$ is the modulus character of the Borel, and $f = \prod{f_p}$ is factorizable.  (We are \emph{not} using normalized induction.)  At almost all places, $f_p$ is normalized so that $f_p(bk,s) = \delta_p(b)^s$ for $b$ upper triangular in $\GL_2(\mathbf Q_p)$ and $k \in \GL_2(\mathbf Z_p)$.  We also define a normalized Eisenstein series by $E^*(g,s) = \zeta(2s)E(g,s)$.  Set $f_p^*(g,s) = \zeta_p(2s)f_p(g,s)$.

\begin{proposition} The global integral $I(\phi,s)$ unfolds to 
\[\int_{N(\mathbf A)Z(\mathbf A)\backslash H(\mathbf A)} f^*(g_1,s)\phi_{\chi}(g) dg,\]
where $f^*(g_1,s) = \zeta(2s)f(g_1,s)$ is the (normalized) section defining the Eisenstein series, and $N$ is the unipotent radical of the Borel subgroup of $H$.  The global integral $I'(\phi,s)$ unfolds to
\[\int_{N_2(\mathbf A)U_{P,4}(\mathbf A)Z(\mathbf A)\backslash (\GL_2 \boxtimes \GSp_4)(\mathbf A)} f^{*}(g_1,s)\phi_{\chi}(g)\alpha_{\chi}(g_2) dg,\]
where $N_2$ is the unipotent radical of the Borel of $\GL_2$, $U_{P,4}$ is the unipotent radical of the Siegel parabolic on $\GSp_4$, and $f, \phi_{\chi}$, and $\alpha_{\chi}$ are as above.
\end{proposition}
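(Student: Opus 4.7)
The plan is to carry out both unfoldings in parallel. The unfolding of $I(\phi,s)$ is essentially the one done by Gan-Gurevich in \cite{gg}; I focus here on $I'(\phi,s)$, which requires the additional step of unfolding $P_D^\alpha$, and indicate how both reduce to the same Fourier/orbit argument.

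First I would unfold $P_D^\alpha$. Since $\GL_{2,L}^*$ is the stabilizer of $v_D$ in $\GSp_4$, projection to the $\GSp_4$-coordinate gives a bijection
\[
H(\Q)\backslash(\GL_2\boxtimes \GSp_4)(\Q)\;\cong\; \GL_{2,L}^*(\Q)\backslash \GSp_4(\Q),
\]
and folding the sum defining $P_D^\alpha$ into the outer quotient yields
\[
I'(\phi,s) \;=\; \int_{H(\Q) Z(\mathbf A) \backslash (\GL_2\boxtimes\GSp_4)(\mathbf A)} E^*(g_1,s)\,\phi(g)\,\alpha(v_D g_2)\,dg.
\]
Next I would unfold the Eisenstein series $E^*(g_1,s)=\sum_{\gamma\in B(\Q)\backslash \GL_2(\Q)} f^*(\gamma g_1,s)$. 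Under the bijection $B(\Q)\backslash \GL_2(\Q)\cong \tilde B(\Q)\backslash H(\Q)$ induced by projection to the first coordinate (where $\tilde B$ denotes the Borel of $H$), the standard unfolding trick collapses the previous display into
\[
\int_{\tilde B(\Q) Z(\mathbf A) \backslash (\GL_2\boxtimes\GSp_4)(\mathbf A)} f^*(g_1,s)\,\phi(g)\,\alpha(v_D g_2)\,dg.
\]
The analogous step for $I(\phi,s)$ arrives at the same integrand over $\tilde B(\Q) Z(\mathbf A)\backslash H(\mathbf A)$.

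The heart of the argument is a Fourier expansion combined with cuspidality. Write $\tilde B=\tilde M\tilde N$ with unipotent radical $\tilde N = N_2\cdot N_L$ embedded in $H\subset\GSp_6$; a direct check from the definition of $\chi$ in Definition~\ref{def:iota} shows that $\tilde N\subset U_R$ and that $\chi$ restricts trivially to $\tilde N$. I would pull out the integral of $\phi$ over $\tilde N(\Q)\backslash\tilde N(\mathbf A)$, Fourier expand the resulting partial coefficient along $U_R/\tilde N$, and use the $\tilde M(\Q)$-action on the space of characters of $U_R/\tilde N$ to partition the sum into orbits. Cuspidality of $\phi$ eliminates the trivial character. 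The main obstacle is to show that every orbit other than that of $\chi$ contributes zero---either by cuspidality along a suitable proper parabolic, or, in the case of $I'$, by the support condition on $\alpha(v_D\,\cdot)$ together with Proposition~\ref{alphachi}. The surviving $\chi$-orbit reassembles via an inner unfolding against $\tilde M(\Q)$ into $\phi_\chi$, and for $I'$ a parallel treatment of the integral of $\alpha(v_D u g_2)$ over $N_L(\mathbf A)\backslash U_{P,4}(\mathbf A)$ against $\chi$ produces $\alpha_\chi$. Combined, the newly introduced $U_R$ and $U_{P,4}$ integrations merge with the leftover $\tilde M$ directions to give the outer quotients $N(\mathbf A)Z(\mathbf A)\backslash H(\mathbf A)$ and $N_2(\mathbf A)U_{P,4}(\mathbf A)Z(\mathbf A)\backslash(\GL_2\boxtimes\GSp_4)(\mathbf A)$ stated in the proposition. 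In a unique-model unfolding this orbit step would be vacuous; here we are squarely in the non-unique model setting of Piatetski-Shapiro--Rallis, and this orbit bookkeeping---rather than the Eisenstein unfolding itself---is where the genuine work lies.
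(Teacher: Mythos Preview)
Your overall strategy---unfold $P_D^\alpha$ first (for $I'$), then the Eisenstein series, then run a Fourier/orbit argument to isolate $\phi_\chi$---is correct and is exactly what the paper does: it simply cites Gan--Gurevich \cite{gg} for $I$ and says $I'$ proceeds identically after the extra first step.

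However, your Eisenstein step has a concrete error. Unfolding $E^*(g_1,s)=\sum_{\gamma\in B(\Q)\backslash\GL_2(\Q)}f^*(\gamma g_1,s)$ from the $H(\Q)$-quotient collapses to the quotient by the \emph{preimage} of $B(\Q)$ under the projection $H\to\GL_2$, namely $(B\boxtimes\GL_{2,L}^*)(\Q)$, and not by the Borel $\tilde B=B\boxtimes B_L$ of $H$. The bijection $B(\Q)\backslash\GL_2(\Q)\cong\tilde B(\Q)\backslash H(\Q)$ you assert is false: the right side is larger by a factor of $B_L(\Q)\backslash\GL_{2,L}^*(\Q)$. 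This is not cosmetic, because the rational symmetry remaining for the orbit bookkeeping is then the Levi of $B\boxtimes\GL_{2,L}^*$, which still contains all of $\GL_{2,L}^*(\Q)$, not merely the maximal torus $\tilde M$ you invoke; it is precisely this nonabelian piece that is used to collapse the nondegenerate characters down to the single $\chi$-orbit. With only a torus acting you would not obtain a unique surviving orbit. Separately, Proposition~\ref{alphachi} is a purely local computation at unramified primes and plays no role in the global unfolding of $I'$; the unwanted orbits for $I'$ vanish by cuspidality of $\phi$ exactly as for $I$, with $\alpha(v_D g_2)$ carried along passively and yielding $\alpha_\chi$ only after the $\chi$-coefficient has been isolated.
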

\begin{proof} The integral $I(\phi,s)$ is unfolded in Gan-Gurevich~\cite{gg}. For the integral $I'(\phi,s)$, one first unfolds the sum defining $P_D^\alpha$.  Then one proceeds exactly as in the case of the integral $I(\phi,s)$.  \end{proof}

Even though the Fourier coefficient $\phi_\chi$ does not factorize, there is still a local integral corresponding to the unfolded global integrals $I(\phi,s)$ and $I'(\phi,s)$.  Namely, for $\Lambda$ a $(U_R,\chi)$-model for the representation $\pi_p$ and a vector $v_0$ in the space of $\pi_p$, we define
\[I(\Lambda,v_0,s) = \int_{N(\Q_p)Z(\Q_p)\backslash H(\Q_p)} f_p^*(g_1,s)\Lambda(\pi_p(g)v_0)dg.\]
Similarly, we define
\[I'(\Lambda,v_0,s)=\int_{N_2(\Q_p)U_{P,4}(\Q_p)Z(\Q_p)\backslash (\GL_2 \boxtimes \GSp_4)(\Q_p)} f_p^{*}(g_1,s)\Lambda(\pi_p(g)v_0)\alpha^p_{\chi}(g_2) dg.\]
When all the data in the integrals $I(\Lambda,v_0,s), I'(\Lambda,v_0,s)$ are unramified at some finite prime $p$, we will rewrite these local integrals as a sum.  Assume now that $v_0$ is a spherical vector and write $\lambda(g)=\Lambda(\pi_p(g)v_0)$.  We need the following condition restricting the support of $\lambda$.
\begin{lemma}\label{lem:lambdanonvan} If $g \in R$ has Levi part $m$ and $\lambda(g) \neq 0$, then in the notation ($\ref{eqn:LeviR}$), $z$ divides the top row of $y$.  If $\lambda(\iota(t,\ell)) \neq 0$, then $|t| \leq |\ell| \leq 1$.
\end{lemma}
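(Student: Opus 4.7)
The strategy is to exploit the equivariance $\lambda(ugk) = \chi(u)\lambda(g)$ for $u \in U_R(\Q_p)$ and $k \in K_p := \GSp_6(\Z_p)$, combined with $\lambda(um) = \lambda(m \cdot m^{-1}um) = \lambda(m)$ whenever $m^{-1}um \in K_p$.  Comparing the two expressions forces $\chi(u) = 1$ for every $u \in U_R(\Q_p) \cap mK_p m^{-1}$ whenever $\lambda(m) \neq 0$.  In each part I therefore produce a specific $u$ in this intersection with $\chi(u) \neq 1$, contradicting $\lambda(m) \neq 0$.

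For part (i), specialize to $u = n_v \in N_V$ and $g = m = \diag(w,x,y,z)$ in the Levi of $R$.  A block-matrix calculation gives $m^{-1}n_v m = n_{v''}$ with $v'' = zv\,({}^t y)^{-1}$, equivalently $v = z^{-1}v''\,{}^t y$.  As $v''$ ranges over $\Z_p^{1\times 2}$, the first coordinate $v_1 = z^{-1}(v''_1 y_{11} + v''_2 y_{12})$ ranges over the $\Z_p$-module spanned by $y_{11}/z$ and $y_{12}/z$.  Since $\chi(n_v) = \psi(v_1)$, triviality of $\chi$ on all such $u$ is equivalent to $y_{11}/z,\ y_{12}/z \in \Z_p$, i.e.\ $z$ divides the top row of $y$.

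For part (ii), run the same argument for $m = \iota(t,\ell)$, so $z = 1$, $w = t$, $y = m_\ell$.  The cleanest viewpoint is to identify $N_V \cong L_p$ via $(v_1, v_2) \leftrightarrow v_1 + v_2\sqrt{D}$; in this basis ${}^t m_\ell$ represents multiplication by $\ell$ on $L_p$, so the conjugation $v \mapsto v\,({}^t m_\ell)^{-1}$ becomes division by $\ell$.  Hence $v''$ is integral precisely when $v \in \ell\,\mathcal{O}_{L_p}$, and $\chi(n_v) = \psi(\mathrm{Re}(v))$.  Testing the requirement $\mathrm{Re}(\ell\alpha) \in \Z_p$ on $\alpha = 1$ and $\alpha = \sqrt{D}$ yields $x',\ Dy' \in \Z_p$, hence $\ell \in \mathcal{O}_{L_p}$, in particular $|\ell| \le 1$.

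To obtain $|t| \le |\ell|$, repeat the analysis with $u = n_u \in N_U$.  By Definition~\ref{def:iota}, $\chi$ vanishes on the image of $N_L$ in $N_U$, which is the codimension-one subspace $\{u_{22} = Du_{11}\}$; hence $\chi$ descends to $\psi \circ q$ on the quotient $N_U/N_L \cong \Q_p$, where $q(u) = u_{22} - Du_{11}$.  A short computation gives ${}^t m_\ell\, e_{22}\, m_\ell = \left(\begin{smallmatrix} y'^2 & x'y' \\ x'y' & x'^2\end{smallmatrix}\right)$, on which $q$ evaluates to $x'^2 - Dy'^2 = N(\ell)$; by linearity and triviality of $q$ on the $N_L$-image, conjugation by $\iota(t,\ell)$ scales the quotient $N_U/N_L$ by the factor $N(\ell)/t$.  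Consequently, as $u''$ ranges over symmetric integer matrices, $q(u)$ ranges over $(t/N(\ell))\Z_p$, and triviality of $\chi$ there forces $t/N(\ell) \in \Z_p$, i.e.\ $|t| \le |N(\ell)| = |\ell|$.  The main subtlety is recognizing the need to pass to the quotient $N_U/N_L$---where $\chi$ is actually detected---rather than attempting to read off the $|t|$-bound from $N_U$ directly, which would yield a condition depending on the specific $\GL_2(\Z_p)$-representative of $m_\ell$ and potentially confuse the analysis.
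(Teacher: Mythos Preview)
Your argument is correct and follows essentially the same strategy as the paper's proof: exploit the transformation law $\lambda(ug)=\chi(u)\lambda(g)$ together with right $K_p$-invariance to force $\chi(u)=1$ on a suitable family of $u$.  The paper multiplies an integral $n_v$ (resp.\ $n_u$) on the right and conjugates it to the left, obtaining $\chi(m n_v m^{-1})=1$ for all $v\in V(\Z_p)$; you instead place $u$ on the left and require $m^{-1}um\in K_p$.  These are inverse conjugations of one another, and the resulting conditions on $m$ are identical.  For the bound $|t|\le|\ell|$ the paper simply asserts $\iota(t,\ell)\,n_u\,\iota(t,\ell)^{-1}\equiv n\bigl(\tfrac{t}{N(\ell)}u\bigr)\pmod{N_L}$, whereas you reach the same scaling factor by passing to the one-dimensional quotient $N_U/N_L$ and tracking the linear form $q(u)=u_{22}-Du_{11}$; the extra bookkeeping is sound and makes the invariance under $N_L$ explicit.

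Two very minor points.  First, the lemma is stated for $g=mn\in R$, not just for $g=m\in M_R$; you silently specialize to $g=m$.  This is harmless since $\lambda(mn)=\chi(mnm^{-1})\lambda(m)$, so $\lambda(g)\neq 0$ iff $\lambda(m)\neq 0$, but it is worth one sentence.  Second, when you write ``conjugation by $\iota(t,\ell)$ scales the quotient by $N(\ell)/t$,'' the computation you actually performed is $u\mapsto m^{-1}n_um$, i.e.\ conjugation by $\iota(t,\ell)^{-1}$; the factor $N(\ell)/t$ is correct for that direction, and your subsequent inversion to obtain $q(u)\in (t/N(\ell))\Z_p$ is right.
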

\begin{proof} Take $n_v$ with $v \in V(\mathbf Z_p)$ and suppose $g = m n$ with $m \in M_R$ and $n \in U_R$.  Then
\[\lambda(g) = \lambda(mnn_v) = \lambda(mn'n_vn) = \chi(mn' n_v m^{-1})\lambda(mn) = \chi(mn_vm^{-1})\lambda(g),\]
where $n' \in [U_R,U_R]$.  Hence $\lambda(g) \neq 0$ implies $\chi(m n_v m^{-1})$ is $1$ for all $n_v$ with $v \in V(\mathbf Z_p)$.  It follows that $z$ divides the top row of $y$.

For the second claim, note that the part above gives $\ell \in \mathcal O_L$, i.e.\ $|\ell| \leq 1$.  Suppose $u \in U(\mathbf Z_p)$.  Then
\[\lambda(\iota(t,\ell)) = \lambda(\iota(t,\ell)n_u) = \chi(\iota(t,\ell)n_u \iota(t,\ell)^{-1})\lambda(\iota(t,\ell)).\]
One computes 
\[\iota(t,\ell)n_u \iota(t,\ell)^{-1} = n\left(\frac{t}{N(\ell)}u\right) + n'\]
with $n' \in N_L$.  Thus $\lambda(\iota(t,\ell)) \neq 0$ implies $|\frac{t}{N(\ell)}| \leq 1$.
\end{proof}
Maintain the assumption that $\pi_p$ is unramified and that $v_0$ is a spherical vector of $\pi_p$.  We also assume that $p \nmid 2D$, $\alpha^p$ is the characteristic function of $V_5(\Z_p)$, and $f^*$ is right $\GL_2(\Z_p)$-invariant.  Applying the Iwasawa decomposition, Lemma \ref{modulusChars}, Proposition \ref{alphachi}, and Lemma \ref{lem:lambdanonvan}, we find that
\[I(\Lambda,v_0,s) = I'(\Lambda,v_0,s) = \zeta_p(2s)\sum_{|t| \leq |\ell| \leq 1}{|t|^{-3}|\ell|^2 \lambda(\iota(t,\ell))|t|^s}.\]
Here the sum extends over all $t \in \mathbf{Q}_p^\times$ and $\ell \in L_p^\times$ satisfying $|t| \leq |\ell| \leq 1$ modulo the action of $\mathbf{Z}_p^\times \times \mathcal O_L^\times$.  It is this sum that we will analyze in the following two sections.  In fact, we will prove Theorem \ref{thm:unramcalc}, which says that the sum is $\lambda(1)L(\pi_p,\Spin,s)$.  Combining this with Proposition \ref{prop:ram} we obtain the following result.
\begin{theorem} \label{mainThm} Given a cusp form $\phi$ in the space of $\pi$, there exist $\phi_1$ also in the space of $\pi$, a section $f^*$ for the Eisenstein series, and a sufficiently large finite set $S$ of finite primes such that $I(\phi_1,s) = I_\infty(\phi,s)L^S(\pi,\Spin,s)$, where
\[I_\infty(\phi,s) = \int_{N(\R)Z(\R)\backslash H(\R)} f^*(g_1,s)\phi_{\chi}(g) dg.\]
Additionally, there exists $\alpha \in \mathcal{S}(V_5(\mathbf A))$ so that $I'(\phi_1,s) = I'_\infty(\phi,s)L^S(\pi,\Spin,s)$, where 
\[I_\infty'(\phi,s) = \int_{N_2(\R)U_{P,4}(\R)Z(\R)\backslash (\GL_2 \boxtimes \GSp_4)(\R)} f^{*}(g_1,s)\phi_{\chi}(g)\alpha_{\chi}(g_2) dg.\]
\end{theorem}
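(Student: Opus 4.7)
The plan is to combine the unfolding just established with Theorem~\ref{thm:unramcalc} and Proposition~\ref{prop:ram}, following the Piatetski-Shapiro--Rallis strategy of peeling off the contribution at one unramified prime at a time.  First I would fix a factorizable pure tensor cusp form $\phi = \otimes_v \phi_v$, a factorizable section $f^* = \prod_v f_v^*$, and (for $I'$) a factorizable Schwartz-Bruhat function $\alpha = \prod_v \alpha_v$; then take $S$ to be a finite set of finite primes large enough that, for every $p \notin S \cup \{\infty\}$, $\pi_p$ is unramified, $\phi_p = v_p^0$ is spherical, $p \nmid 2D$, $f_p^*$ is the standard normalized unramified section, and $\alpha_p$ is the characteristic function of $V_5(\mathbf{Z}_p)$.

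The central step is to verify, for each $p \notin S \cup \{\infty\}$, that the $p$-part of the integral contributes $L(\pi_p, \Spin, s)$.  Fix the data at all other places and regard the integrand as a function of $g_p \in H(\mathbf{Q}_p)$.  Because $\phi$ is a pure tensor with spherical $p$-component, this function has the form $g_p \mapsto \Lambda_p(\pi_p(g_p) v_p^0)$ for the local $(U_R(\mathbf{Q}_p), \chi)$-model $\Lambda_p$ on $\pi_p$ obtained by replacing the $p$-component of $\phi$ by the argument and evaluating $\phi_\chi$ at the fixed adelic point.  Theorem~\ref{thm:unramcalc} applied to $\Lambda_p$ and $v_p^0$ then gives
\[\int_{N(\mathbf{Q}_p)Z(\mathbf{Q}_p) \backslash H(\mathbf{Q}_p)} f_p^*(g_{1,p}, s)\, \Lambda_p(\pi_p(g_p) v_p^0)\, dg_p = \Lambda_p(v_p^0)\, L(\pi_p, \Spin, s),\]
and $\Lambda_p(v_p^0)$ equals the original integrand with $g_p = 1$.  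Iterating over all $p \notin S \cup \{\infty\}$ extracts the factor $L^S(\pi, \Spin, s)$ and reduces the global integral to a product of integrals over the Archimedean and $S$-ramified places.  The argument for $I'$ is identical, with Proposition~\ref{alphachi} used to handle $\alpha^p_\chi$ at each unramified prime.

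Next, I would invoke Proposition~\ref{prop:ram} at the primes in $S$: by smearing $\phi$ against a compactly supported smooth function on $G(\mathbf{Q}_S)$ and selecting $f^*_S$ (and $\alpha_S$ for $I'$) appropriately, one produces a new cusp form $\phi_1$ in the space of $\pi$, agreeing with $\phi$ at infinity and at primes outside $S$, for which the local integral at each $p \in S$ collapses to $1$.  Since $\phi_1$ and $\phi$ coincide at the Archimedean place, the Archimedean integrand is unchanged, and this yields the desired identity $I(\phi_1; s) = I_\infty(\phi; s)\, L^S(\pi, \Spin, s)$ and its analogue for $I'$.

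The main subtlety is the peeling step above.  The global functional $\phi \mapsto \phi_\chi(1)$ does not factor as a tensor product of local models, so one cannot simply ``localize'' $\phi_\chi$ as a product of local Whittaker-type functions.  What rescues the argument is the assertion of Theorem~\ref{thm:unramcalc} that each unramified local integral depends on the local model $\Lambda_p$ only through the scalar $\Lambda_p(v_p^0)$.  This is the distinguishing feature of the non-unique model setting in the Piatetski-Shapiro--Rallis framework, and it is exactly what permits the iterated peeling off of unramified primes to produce a bona fide Euler product $L^S(\pi,\Spin,s)$ from a non-Eulerian Fourier coefficient.
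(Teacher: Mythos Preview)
Your proposal is correct and matches the paper's approach; the paper itself proves Theorem~\ref{mainThm} in a single sentence, simply asserting that it follows from combining Theorem~\ref{thm:unramcalc} with Proposition~\ref{prop:ram}, and your write-up supplies exactly the Piatetski-Shapiro--Rallis peeling argument that underlies that sentence.  The only points you might tighten are (i) the tacit reduction to a pure tensor $\phi$, and (ii) the usual caveat that the infinite peeling is first carried out for $\Re(s)$ large, where the partial $L$-function and the remaining integral converge absolutely, and then extended by meromorphic continuation---both are standard and implicit in the paper as well.
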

\section{Calculation with $\Delta$}\label{delta}
In this section and the next, we assume that $p \nmid 2D$ and that $\pi_p$ is unramified.  Denote by $\mathbf p$ the element $p \cdot 1_6$ of the center of $\GSp_6$.  When $p$ is split in $L$, pick $\pi_1, \pi_2$ two uniformizers above $p$ such that $p = \pi_1 \pi_2$.  We denote by $\tau$ the matrix
\[\left(\begin{array}{ccc|ccc} 
 1&  &  &  &  &  \\
 &  1&  &  &  &  \\
 &  &  p&  &  &  \\
\hline
 &  &  & p &  &  \\
 &  &  &  & 1 &  \\
 &  &  &  &  & p
\end{array}
\right).\]
The purpose of this section is to prove the following two theorems.
\begin{thm}\label{thm:deltainert}
Suppose that $p$ is inert in $L$.  Then
\[\int_{\GSp_6(\mathbf{Q}_p)} \lambda(g)\Delta(g)^s dg = \sum_{r \ge 0} \lambda(\iota(p^r,1))p^{6r-rs} - \sum_{r \ge 2} \lambda(\iota(p^{r-1},1)\tau)p^{6r-6-rs}.\]
\end{thm}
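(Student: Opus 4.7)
The plan is to reduce the integral over $\GSp_6(\Q_p)$ to a sum over torus representatives by combining the Iwasawa decomposition $\GSp_6(\Q_p) = U_R M_R K$ (with $K = \GSp_6(\Z_p)$), the left $(U_R, \chi)$-equivariance of $\lambda$, and the support constraints from Lemma \ref{lem:lambdanonvan}.

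First, bi-$K$-invariance of $\Delta^s$ and right $K$-invariance of $\lambda$ convert the integral into a multiple of
\[\int_{M_R} \delta_R^{-1}(m)\,|\nu(m)|^s\,\lambda(m)\,F(m)\,dm,\qquad F(m) := \int_{U_R}\chi(u)\,\charf(um)\,du,\]
after inserting $\lambda(um) = \chi(u)\lambda(m)$ and $\Delta^s(um) = |\nu(m)|^s\,\charf(um)$. The Fourier factor $F(m)$ is supported on a translate of a compact subgroup of $U_R$ and either equals a standard volume or vanishes, depending on whether $\chi$ is trivial on the associated integral lattice.

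Next, Lemma \ref{lem:lambdanonvan} combined with the Cartan decomposition of the Levi $M_R \simeq \GL_1\times\GL_1\times\GL_2$ restricts the contributing $m$ to representatives of the form $\iota(p^r, p^k)$ for integers $k \geq 0$ and $r \geq 2k$; here inertness of $p$ in $L$ gives $|\ell| \leq 1 \iff \ell \in p^k\mathcal{O}_L^\times$, and a right-$K$-translation reduces $\ell$ to $p^k$. Lemma \ref{modulusChars} then yields $|\nu(\iota(p^r,p^k))|^s\,\delta_R^{-1}(\iota(p^r,p^k)) = p^{6r - 6k - rs}$, producing the exponent $p^{6r-rs}$ for $k=0$ and $p^{6r-6-rs}$ for $k=1$. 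A short check with the signed-permutation element of $K$ that swaps the symplectic pairs $(1,6)$ and $(3,5)$ yields $\iota(p^{r-1},1)\tau \equiv \iota(p^r, p) \pmod K$, so by right $K$-invariance of $\lambda$ the $k=1$ contribution may be rewritten in terms of $\lambda(\iota(p^{r-1},1)\tau)$.

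The main remaining task is the explicit evaluation of $F(\iota(p^r,p^k))$. Writing out the integrality conditions $u\iota(p^r,p^k) \in M_6(\Z_p)$ in the coordinates of $u \in U_R$ yields a shifted $\Z_p$-lattice in $U_R$, and integration of $\chi = \psi(v_1 - Du_{11} + u_{22})$ against this lattice produces a positive contribution for $k=0$, a cancelling contribution for $k=1$ that accounts for the minus sign in the theorem, and vanishes for $k \geq 2$ because $\chi$ restricts nontrivially on a component of the lattice. The main obstacle is precisely this character-sum combinatorics: one must check that the integrality conditions expand correctly against the defining equation of $\chi$, and that the resulting cancellation reproduces exactly the two sums with the claimed signs and the vanishing for $k \geq 2$.
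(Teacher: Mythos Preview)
Your overall shape—Iwasawa to reduce to $\int_{M_R}\delta_R^{-1}|\nu|^s\lambda(m)F(m)\,dm$ with $F(m)=\int_{U_R}\chi(u)\,\charf(um)\,du$—is fine and matches the paper's starting point, but your second step does not go through. The integrand $\lambda(m)F(m)$ is only \emph{right} $K_M$-invariant (where $K_M=M_R\cap K$); neither factor is left $K_M$-invariant, since conjugating $\chi$ by a generic element of $K_M$ moves it. So the Cartan decomposition is unavailable, and $M_R/K_M$ is parametrized by $w=p^r$, $z=p^c$, and $y$ an arbitrary upper-triangular Hermite representative—strictly larger than the two-parameter family $\iota(p^r,p^k)$ (which has $y=p^kI_2$ and $z=1$). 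In fact a direct check gives $F(\iota(p^r,p))=0$: integrality of $u\cdot\iota(p^r,p)$ forces $u_{11},u_{22}\in p^{-1}\Z_p$, and the factors $\int_{p^{-1}\Z_p}\psi(-Du_{11})\,du_{11}$ and $\int_{p^{-1}\Z_p}\psi(u_{22})\,du_{22}$ both vanish since $p\nmid D$. Your attempted patch, that $\iota(p^{r-1},1)\tau\equiv\iota(p^r,p)\pmod K$, is also false as a right-$K$ statement: these elements are Weyl-conjugate (hence in the same $K$-double coset), but $\iota(p^r,p)^{-1}\iota(p^{r-1},1)\tau=\diag(p^{-1},1,p,1,p^{-1},p)\notin K$, so $\lambda$ need not agree on them.

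The negative term in the theorem actually arises from the Levi representatives with $y=\diag(p^a,1)$ and $z=p^a$; for $a=1$ this is $\iota(p^r,1)\tau$, and there one computes $F=-p^3$. Showing that precisely these (together with $a=0$, i.e.\ $\iota(p^r,1)$) survive, that all other Levi cosets give zero, and that the $\chi(n_v)=\psi(v_1)$ factor then kills the contributions with $a\ge 2$, is the real work. The paper organizes this differently: it splits $U_R=N_{U'}N_UN_V$, does the $N_{U'}$ and $N_U$ integrals first, and absorbs the $N_V$-variable into the parametrization by passing to the upper-triangular Hermite form $m$ of the lower-right $3\times 3$ block of $n_vg$ (so the entries $\gamma_1,\gamma_2$ of $m$ encode $v$). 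The main content is then the classification of those $m$ for which the inner integral does not vanish (the ``admissible'' $m$ of Theorem~\ref{thm:inertclassify}), which in the inert case forces $b=0$ and $a=c$ through a sequence of short but nontrivial claims; the theorem follows by summing over these and $r$.
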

\begin{thm}\label{thm:deltasplit}
Suppose that $p$ is split in $L$.  Then
\begin{align*}\int_{\GSp_6(\mathbf{Q}_p)} \lambda(g)\Delta(g)^s dg =& \sum_{\substack{a \ge 0 \\ r \ge a \\ i \in \set{1,2}}} \lambda(\iota(p^r,\pi_i^a))p^{6r-2a-rs} - \sum_{r\ge 2} \lambda(\iota(p^{r-1},1)\tau)p^{6r-6-rs}\\
&-2 \sum_{r \ge 2} \lambda(\iota(p^{r-2},1)\mathbf{p})p^{6r-8-rs} \\&- \sum_{\substack{a \ge 2\\ r \ge a+1 \\ i \in \set{1,2}}} \lambda(\iota(p^{r-2},\pi_1^{a-1})\mathbf{p})p^{6r-2a-6-rs}.
\end{align*}
\end{thm}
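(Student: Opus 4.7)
The plan is to follow the same strategy as Theorem~\ref{thm:deltainert}, with additional casework reflecting that in the split case there are two uniformizers $\pi_1, \pi_2$ above $p$ and the square root $h$ of $D$ fixed in Lemma~\ref{lem:hlem} lies in $\mathbf{Z}_p^\times$. First, using the right-$K$-invariance of $\lambda$ (with $K = \GSp_6(\mathbf{Z}_p)$) together with the bi-$K$-invariance of $\Delta^s$, I would normalize so that $\operatorname{vol}(K) = 1$ and apply the Iwasawa decomposition $\GSp_6(\mathbf{Q}_p) = U_R(\mathbf{Q}_p) M_R(\mathbf{Q}_p) K$. The transformation property $\lambda(umk) = \chi(u)\lambda(m)$ then reduces the integral to
\[\int_{\GSp_6(\mathbf{Q}_p)} \lambda(g)\Delta^s(g)\,dg = \int_{M_R(\mathbf{Q}_p)} \lambda(m)\,|\nu(m)|^s\,\delta_R(m)^{-1}\,I(m)\,dm,\]
where $I(m) := \int_{U_R(\mathbf{Q}_p)} \chi(u)\,\mathbf{1}_{um \in M_6(\mathbf{Z}_p)}\,du$. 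I would then further Iwasawa-decompose the $\GL_2$-block of $m = \diag(w, x, y, z)$ to reduce $y$ to upper-triangular form $\bigl(\begin{smallmatrix} y_1 & y_2 \\ 0 & y_3\end{smallmatrix}\bigr)$.

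The heart of the computation is the evaluation of $I(m)$. The condition $um \in M_6(\mathbf{Z}_p)$ cuts out a lattice in $U_R$ whose shape depends on the valuations of $w, z$ and the entries of $y$, and $\chi(u) = \psi(v_1 - D u'_{11} + u'_{22})$ must be integrated over this lattice. A change of variables on the symmetric block coordinates $u'_{11}, u'_{12}, u'_{22}$ exploiting $h^2 = D$ (for instance, $\beta := u'_{12} - h u'_{11}$ and $\gamma := u'_{22} - h u'_{12}$, under which $-D u'_{11} + u'_{22}$ collapses to $h\beta + \gamma$) partially trivializes the character and turns the $u'$-integration into an elementary Haar integral; repeating with $h \mapsto -h$ gives the other branch. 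I expect the conclusion that $I(m) \neq 0$ precisely when the Iwasawa $y$-block is $K_2$-equivalent to $\bigl(\begin{smallmatrix} p^a & \mp h \\ 0 & 1\end{smallmatrix}\bigr)$, which by Lemma~\ref{lem:hlem} identifies such $y$ with $m_{\pi_i^a}$ for some $a \ge 0$ and $i \in \{1, 2\}$, plus finitely many degenerate classes where $y_2 \equiv 0 \pmod{y_1}$ or where additional central factors can be extracted.

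Combining the main $I(m)$-contributions with the modulus character $\delta_R(m)^{-1} = |\nu|^{-6}|z|^6|\det y|^3$ from Lemma~\ref{modulusChars} and the factor $|\nu(m)|^s = |wz|^s$, then integrating against Haar measure on $M_R$, should produce the main term $\sum_{a \ge 0,\, r \ge a,\, i \in \{1,2\}} \lambda(\iota(p^r, \pi_i^a))\, p^{6r - 2a - rs}$. The degenerate classes yield overlapping parameterizations with the main sum, and the $\tau$- and $\mathbf{p}$-translate correction terms should arise from inclusion-exclusion: $\iota(p^{r-1}, 1)\tau$ captures a shift at the boundary of the $\GL_2$-Iwasawa (where $v(y_2)$ crosses $v(y_1)$), while $\iota(p^{r-2}, \ell)\mathbf{p}$ appears when $w$ and $z$ are simultaneously divisible by $p$, permitting extraction of the central element $\mathbf{p} = p \cdot 1_6$.

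The principal obstacle is the combinatorial bookkeeping of all contributing Iwasawa classes and the inclusion-exclusion needed to obtain the precise coefficients and signs, especially the factor $-2$ preceding the first $\mathbf{p}$-correction, which should reflect an overlap of the two $\pi_i$-branches at the central scaling (an effect with no analogue in the inert case). A secondary obstacle is verifying the explicit value of $I(m)$ in each case, which requires careful tracking of how the condition $um \in M_6(\mathbf{Z}_p)$ restricts the coordinates of $U_R$ under the twisted character $\chi$, and of the resulting volume factors at each boundary between the generic and degenerate regimes.
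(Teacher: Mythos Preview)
Your overall strategy—Iwasawa decomposition followed by evaluating the unipotent integral $I(m)$ and then summing over $M_R$—is exactly the paper's. Where your plan diverges from the paper, and where it has a genuine gap, is in the analysis of $I(m)$.

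The paper does \emph{not} compute $I(m)$ in one stroke. It factors $U_R$ as $N_{U'} \cdot N_U \cdot N_V$ and integrates in that order. After the $N_{U'}$- and $N_U$-integrals, one is left with a function $B(\cdot)$ not on $M_R$ but on the lower-right $3\times 3$ block of $n_v g$—an upper-triangular matrix $\left(\begin{smallmatrix} p^a & \beta & \gamma_1 \\ & p^b & \gamma_2 \\ & & p^c\end{smallmatrix}\right)$ whose off-diagonal column $(\gamma_1,\gamma_2)$ records the $N_V$-translate. The heart of the argument is then a classification (the paper's split-case theorem on admissible matrices) of exactly which such $3\times3$ blocks survive the $\chi$-integration, together with their cumulative measures $B(m)=p^{a+2b+c}$. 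There are four families, distinguished by the relations among $a,b,c$ and by congruences $\beta^2\equiv D$, $(\gamma_1\gamma_2^{-1})^2\equiv D$, and $-\beta_0(\gamma_1\gamma_2^{-1})\equiv D$ modulo suitable powers of $p$.

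Your expectation that $I(m)\neq 0$ precisely when the $2\times 2$ block $y$ is $\GL_2(\Z_p)$-equivalent to $\left(\begin{smallmatrix} p^a & \mp h \\ 0 & 1\end{smallmatrix}\right)$ is therefore wrong as stated: it captures only the first of the four families (the case $b=c=0$). In the remaining three families one has $c\ge 1$, the $y$-block is \emph{not} of that shape (e.g.\ $y=\diag(p^a,p^b)$ with $\beta=0$), and the admissibility condition lives instead on the $N_V$-coordinates $\gamma_1,\gamma_2$. These additional families are not ``finitely many degenerate classes''; they are three infinite families indexed by $(a,b,c)$, and they are precisely what produces the $\tau$- and $\mathbf{p}$-translate terms once one rewrites each $g_{m,r}$ via Lemma~\ref{lem:hlem}. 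So the correction terms are not an inclusion--exclusion artifact of overlapping parametrizations of a single family; each of the four sums in the theorem comes from its own family of admissible $3\times 3$ blocks.

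Your change of variables $\beta=u'_{12}-hu'_{11}$, $\gamma=u'_{22}-hu'_{12}$ is sound and in fact closely parallels substitutions the paper uses when verifying admissibility in individual cases (for instance in the $a>c=b\ge 1$ family). But it does not by itself linearize the lattice condition $um\in M_6(\Z_p)$, which mixes all three coordinates in a way that depends on $(a,b,c,\beta,\gamma_1,\gamma_2)$. To carry your plan through you would still need to discover the four-case classification, and at that point you are essentially reproducing the paper's proof.
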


Recall the definitions of $R, N_V, N_U,$ and $N_{U'}$ from Section \ref{subsec:groupdefs}.  Then
\begin{align}
	\nonumber &\int_{\GSp_6(\mathbf{Q}_p)} \lambda(g)\Delta^s(g) dg = \int_{R(\mathbf{Q}_p)} \delta_R^{-1}(g)\lambda(g)\Delta^s(g) dg\\
	\label{eqn:iterintegrals}=& \underbrace{\int_{M_R(\mathbf{Q}_p)}}_{\textrm{(D)}} \delta_R^{-1}(g)|\nu(g)|^s \lambda(g)\underbrace{\int_{N_V(\mathbf{Q}_p)}}_{\textrm{(C)}}\chi(n_v) \charf(n_vg) \underbrace{\int_{N_U(\mathbf{Q}_p)}}_{\textrm{(B)}} \chi(n_u)\\
	\nonumber&\hspace{6.5cm}\cdot\underbrace{\int_{N_{U'}(\mathbf{Q}_p)}}_{\textrm{(A)}} \charf(n_{u'}n_un_vg)\,du'\,du\,dv\,dg.
\end{align}
We evaluate these integrals from the inside out.

\subsection{Integral (A)}

\begin{defin}

Observe that any element of $\left(\GL_3(\mathbf Q_p) \cap M_3(\mathbf{Z}_p)\right)/\GL_3(\mathbf{Z}_p)$ may be represented uniquely by a matrix
\begin{equation} \label{eqn:mform} m = \(\begin{array}{ccc} p^a & \beta & \gamma_1 \\ 0 & p^b & \gamma_2 \\ 0 & 0 & p^c\end{array}\), \beta,\gamma_1 \in [0,p^a-1]\textrm{ and }\gamma_2 \in [0,p^b-1].\end{equation}
Such $m$ will arise as representatives of the right $\GL_3(\mathbf{Z}_p)$-equivalence class of the lower right hand corner of $g \in M_R(\mathbf{Q}_p)$.

We need only compute the inner integrals in (\ref{eqn:iterintegrals}) when $\lambda(g) \neq 0$.  By Lemma \ref{lem:lambdanonvan}, $\lambda(g) \neq 0$ implies that $a \geq c$ and $p^c|\beta$.  We define $\mathbf{M}$ to be the set of matrices $m$ of the form (\ref{eqn:mform}) meeting these two additional conditions.

For calculations, it will be more useful to also consider a larger, redundant set of representatives.  We define
\[\mathbf{M}'=\set{\left.\(\begin{array}{ccc} p^a & \beta & \gamma_1 \\ 0 & p^b & \gamma_2 \\ 0 & 0 & p^c\end{array}\)\right|\beta,\gamma_1,\gamma_2 \in \mathbf{Z}_p, a\ge c, p^c|\beta}.\]
There is a natural quotient map $\iota: \mathbf{M}' \rightarrow \mathbf{M}$ defined by replacing above-diagonal entries with representatives in (\ref{eqn:mform}).

We will also write $y$ for the minor $\mm{p^a}{\beta}{0}{p^b}$ of $m \in \mathbf{M}'$ or $\mathbf{M}$.

\end{defin}

We will be able to compute integrals (C) and (D) in (\ref{eqn:iterintegrals}) in terms of a sum over the entries of $\mathbf{M}$.

\begin{definition}

For $m \in \mathbf{M}'$, denote by $U_m$ the set of $u \in M_2(\mathbf Q_p)$ satisfying
\begin{itemize}
	\item ${}^tu = u$
	\item $uy \in M_2(\mathbf Z_p)$
	\item $(\gamma_1, \gamma_2)uy \in p^c\mathbf Z_p \oplus p^{\min(b,c)}\mathbf Z_p.$
\end{itemize}
Observe that $U_m$ is a subgroup of $M_2(\mathbf Q_p)$.
\end{definition}

\begin{remark}
One can check directly that if $m \in \mathbf{M}'$, $U_m$ depends only on its image $\iota(m)$ in $\mathbf{M}$.
\end{remark}

\begin{prop}\label{prop:uprimecond}
Suppose $n_v g \in M_6(\mathbf Z_p)$, and suppose the lower right $3 \times 3$ block of $n_v g$ is right $\GL_3(\mathbf{Z}_p)$-equivalent to $m \in \mathbf{M}'$.  Then
\begin{equation} \label{eqn:umeas} \int_{N_{U'}(\mathbf{Q}_p)} \charf(n_{u'}n_un_vg)\,du' = \begin{cases} p^{2c + \min{(b,c)}} &\mbox{if } u \in U_m \\ 0 & \mbox{otherwise}. \end{cases} \end{equation}
\end{prop}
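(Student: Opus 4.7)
The plan is to reduce the integrality condition $n_{u'}n_u n_v g \in M_6(\mathbf{Z}_p)$ to a single matrix equation $Sm \in M_3(\mathbf{Z}_p)$, where $S$ is a symmetric $3 \times 3$ matrix assembled from the $N_U$ and $N_{U'}$ parameters, and then to compute the Haar measure of the admissible $(r_1^{(u')}, r_2^{(u')}, s)$ directly.

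First I would use $n_v g \in M_6(\mathbf{Z}_p)$ to see that only the last three columns of rows $1, 2, 3$ of $n_{u'}n_u n_v g$ give non-trivial integrality conditions: rows $4, 5, 6$ are unchanged by $n_u$ and $n_{u'}$, and their first three columns vanish because $g \in R$ is block upper-triangular. Parametrizing $N_{U'}$ by $(r_1^{(u')}, r_2^{(u')}, s)$ at positions $(1,4), (1,5), (1,6)$ (the symplectic relations forcing $(2,6) = r_1^{(u')}$ and $(3,6) = r_2^{(u')}$), and using that the lower-right $3 \times 3$ block $Y$ of $n_v g$ is right $\GL_3(\mathbf{Z}_p)$-equivalent to $m$, the integrality of the perturbations of rows $2, 3, 1$ (in columns $4, 5, 6$) assembles into
\[
Sm \in M_3(\mathbf{Z}_p), \qquad S = \begin{pmatrix} u_{11} & u_{12} & r_1^{(u')} \\ u_{12} & u_{22} & r_2^{(u')} \\ r_1^{(u')} & r_2^{(u')} & s \end{pmatrix}.
\]
The symmetry of $S$ neatly encodes the intertwining of $n_u$ and $n_{u'}$.

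Next I would expand $Sm$ into nine scalar conditions and sort them by the parameters they involve. The four upper-left entries depend only on $u$ and are precisely the condition $uy \in M_2(\mathbf{Z}_p)$, matching the second bullet in the definition of $U_m$. The $(1,3)$ and $(2,3)$ entries pin $r_1^{(u')}$ and $r_2^{(u')}$ to cosets of $p^{-c}\mathbf{Z}_p$, with representatives depending on $u$ and $(\gamma_1, \gamma_2)$, and the $(3,3)$ entry freely pins $s$ to a coset of $p^{-c}\mathbf{Z}_p$ of measure $p^c$. The crux is the analysis of the $(3,1)$ and $(3,2)$ entries: using $a \geq c$, the $(3,1)$ entry combined with the $(1,3)$ coset is compatible precisely when $\gamma_1 u_{11} + \gamma_2 u_{12} \in p^{c-a}\mathbf{Z}_p$, equivalently when the first coordinate of $(\gamma_1, \gamma_2)uy$ lies in $p^c \mathbf{Z}_p$. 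Using $p^c \mid \beta$ to absorb $r_1^{(u')}\beta$ modulo $\mathbf{Z}_p$, the $(3,2)$ entry translates into a second coset constraint on $r_2^{(u')}$ modulo $p^{-b}\mathbf{Z}_p$, which is compatible with the $(2,3)$ coset iff the second coordinate of $(\gamma_1, \gamma_2)uy$ lies in $p^{\min(b,c)}\mathbf{Z}_p$; when compatible, the intersection of the two cosets is a coset of $p^{-\min(b,c)}\mathbf{Z}_p$ of measure $p^{\min(b,c)}$.

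Assembling these observations, the conditions on $u$ that emerge are exactly those defining $U_m$. For $u \in U_m$, the admissible set of $(r_1^{(u')}, r_2^{(u')}, s)$ has Haar measure $p^c \cdot p^{\min(b,c)} \cdot p^c = p^{2c + \min(b,c)}$, giving the claim; for $u \notin U_m$ at least one compatibility fails and the set is empty. The main obstacle will be the careful bookkeeping of the coset intersections, especially the uniform derivation of the factor $p^{\min(b,c)}$ from the $(3,2)$ entry, which must treat the cases $b \geq c$ and $b < c$ on the same footing via the divisibility $p^c \mid \beta$.
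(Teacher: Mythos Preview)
Your proposal is correct and follows essentially the same route as the paper: both reduce integrality of $n_{u'}n_un_vg$ to the condition that the upper-right $3\times 3$ block of $n_{u'}n_u$ times $m$ lies in $M_3(\mathbf{Z}_p)$, and then extract the condition $u\in U_m$ and the measure $p^{2c+\min(b,c)}$ from this. The only difference is organizational: you package the block (after the row permutation $2,3,1$) as a symmetric matrix $S$ and read off the condition and the measure simultaneously via coset intersections, whereas the paper writes the block as $\left(\begin{smallmatrix} r & u_3' \\ u & {}^tr\end{smallmatrix}\right)$, first eliminates $r,u_3'$ to get the existence condition ${}^t\gamma\, u y \in \mathbf{Z}_p^2 y + p^c\mathbf{Z}_p^2$, and then computes the measure separately by shifting to the homogeneous case.
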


\begin{proof}

Since the integrand and domain are right $\GL_3(\mathbf{Z}_p)$-invariant, we may assume the lower right $3 \times 3$ block of $n_vg$ is exactly $m$.  Putting our matrices in block form, we write $m = \left(\begin{array}{c|c}y & \gamma \\ \hline  & p^c\end{array}\right)$ and we write $\left(\begin{array}{c|c} r & u_3' \\ \hline u & \,^tr \end{array}\right)$ for the upper right $3 \times 3$ block of $n_{u'}n_{u}$.

A matrix multiplication shows that the integrality of $n_{u'}n_un_vg$ is equivalent to
\begin{equation} \label{eqn:intconds} u\cdot y \in M_2(\mathbf{Z}_p), u\gamma + p^c\,^tr \in \mathbf{Z}_p^2, ry \in \mathbf{Z}_p^2, \textrm{ and } r\gamma+p^cu_3' \in \mathbf{Z}_p.\end{equation}
Since $u_3'$ can always be chosen to satisfy the last condition, the existence of an element $u'$ making $\charf(n_{u'}n_un_vg) = 1$ is equivalent to $uy \in M_2(\mathbf{Z}_p)$, $ry \in \mathbf{Z}_p^2$, and $^t\gamma u + p^c r \in \mathbf{Z}_p^2$.  Multiplying this last condition by $y$, we find that integrality of $n_{u'}n_un_vg$ is equivalent to $uy \in M_2(\mathbf{Z}_p)$ and $\,^t\gamma uy$ in $\mathbf{Z}_p^2y + p^c\mathbf{Z}_p^2$.  Using the form of $y$ above, $\mathbf{Z}_p^2y + p^c\mathbf{Z}_p^2$ is equal to $p^c\mathbf Z_p \oplus p^{\min(b,c)}\mathbf Z_p.$  This proves that the integral is nonvanishing precisely when $u$ is in $U_m$.

Assuming that $u$ is in $U_m$, we now compute the integral $\int_{N_{U'}(\mathbf{Q}_p)} \charf(n_{u'}n_un_vg)\,du'$.   Since $u$ is in $U_m$, there is $u_0'$ in $N_{U'}$ so that $n_{u'_0}n_un_vg$ is integral.  Changing variables in the integral to shift by $u_0'$, we find that we must compute the measure of matrices $\left(\begin{array}{c|c} r & u_3' \\ \hline  & \,^tr \end{array}\right)$ so that the product
\[\left(\begin{array}{c|c} r & u_3' \\ \hline  & \,^tr \end{array}\right)\left(\begin{array}{c|c}y & \gamma \\ \hline  & p^c\end{array}\right)\]
is in $M_3(\mathbf{Z}_p)$.  The product is in $M_3(\mathbf{Z}_p)$ precisely when $ry$ and $p^c r$ in $\mathbf{Z}_p^2$ and $r\gamma + p^cu_3'$ in $\mathbf{Z}_p$.  Using the special form of $y$ above, the first condition occurs exactly when $r$ is in $p^{-c}\mathbf{Z}_p \oplus p^{-\min(b,c)}\mathbf Z_p$.  Thus the total measure of such matrices is $p^{2c + \min(b,c)}$, as claimed.

\end{proof}

\subsection{Integral (B)}

For each $m \in \mathbf{M}$, we must calculate
\[\int_{N_U} \chi(n_u) \charf_{U_m}(u) \,du.\]
As $U_m$ is a group, this is nonzero only when $U_m \subseteq \ker \chi$.

\begin{definition}
We say that an element $m' \in M_3(\mathbf{Z}_p)$ is called \emph{admissible} if it is right $\GL_3(\mathbf{Z}_p)$-equivalent to an element $m \in \mathbf{M}$ with $U_m \subseteq \ker \chi$.  Define the \emph{cumulative measure} $B(m')$ associated to an admissible $m'$ to be $p^{2c + \min{(b,c)}}\mathrm{meas}(U_m)$, and define $B(m')=0$ for any $m' \in M_3(\mathbf{Z}_p)$ that is not admissible.
\end{definition}

For the purpose of evaluating the iterated integral (\ref{eqn:iterintegrals}), we may replace the two innermost integrals by the cumulative measure $B(m')$.  Most of the effort in this section is devoted to finding the admissible $m \in \mathbf{M}$. The answer depends on whether $p$ is split or inert in $L$.  We will first prove some claims that apply in both cases.  For convenience, we will use the larger set $\mathbf{M}'$.

The proofs of these claims will follow from careful selection of a matrix $u$ to check that if certain conditions do not hold, we have $U_m \not\subseteq \ker\chi$.  It will be useful to write out the matrices in the defining conditions for $U_m$ explicitly: if $u = \mm{u_{11}}{u_{12}}{u_{12}}{u_{22}}$, then
\begin{align} \label{eqn:umcondexplicit}
	uy &= \mb{u_{11}p^a}{u_{11}\beta+u_{12}p^b}{u_{12}p^a}{u_{12}\beta+u_{22}p^b}\\
	\nonumber\textrm{ and }(\gamma_1,\gamma_2)uy &= (u_{11}p^a\gamma_1+u_{12}p^a\gamma_2, (u_{11}\beta+u_{12}p^b)\gamma_1 + (u_{12}\beta+u_{22}p^b)\gamma_2).
\end{align}

\begin{claim} \label{claim:cgeb}
If $m \in \mathbf{M}'$ is admissible, then $c \ge b$.
\end{claim}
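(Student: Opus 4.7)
The plan is to prove the contrapositive: assuming $b > c$, I will exhibit a single element $u \in U_m$ with $\chi(n_u) \neq 1$, thereby contradicting admissibility.

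Recall that $\chi(n_u) = \psi(-D u_{11} + u_{22})$, where $\psi$ is the standard additive character, trivial on $\mathbf{Z}_p$ but nontrivial on $p^{-1}\mathbf{Z}_p$. It therefore suffices to produce $u \in U_m$ with $u_{11} = 0$ and $u_{22} \notin \mathbf{Z}_p$. I would try the simplest candidate of this shape, namely
\[ u = \begin{pmatrix} 0 & 0 \\ 0 & p^{-1} \end{pmatrix}. \]
Under $b > c \geq 0$ one has $b \geq 1$, so the explicit formulas in (\ref{eqn:umcondexplicit}) give $uy = \diag(0,\, p^{b-1})$, which lies in $M_2(\mathbf{Z}_p)$. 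Likewise $(\gamma_1,\gamma_2)uy = (0,\, \gamma_2 p^{b-1})$ lies in $p^{b-1}\mathbf{Z}_p \oplus p^{b-1}\mathbf{Z}_p \subseteq p^c\mathbf{Z}_p \oplus p^{\min(b,c)}\mathbf{Z}_p$, since $b-1 \geq c = \min(b,c)$ and $\gamma_2 \in \mathbf{Z}_p$. Hence $u \in U_m$, while $\chi(n_u) = \psi(p^{-1}) \neq 1$, as required.

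I do not anticipate any real obstacle in carrying out this plan. The key conceptual point, which guided the choice of $u$, is that the lower-right corner of $y$ has $p$-adic valuation $b$, so when $b > c$ this valuation strictly exceeds the $p^c$-slack appearing in the third condition defining $U_m$. That excess is precisely what lets me place a $p^{-1}$ pole in the lower-right entry of $u$ without violating either integrality condition, while simultaneously ensuring that $u_{22}$ contributes nontrivially to the character. I expect that the subsequent claims of this section will require analogous constructions, placing a pole in a different entry of $u$ paired with the appropriate entry of $y$, to pin down the remaining constraints on the exponents $a,b,c$ of an admissible $m$.
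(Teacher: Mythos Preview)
Your proof is correct and is exactly the paper's argument, written out with the verifications the paper leaves implicit: both take $u=\mm{0}{0}{0}{p^{-1}}$ and observe that under $b>c$ this lies in $U_m$ but not in $\ker\chi$.
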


\begin{proof}
If $b > c$, then $\mm{0}{0}{0}{p^{-1}} \in U_m$, so $U_m \not\subseteq \ker \chi$.
\end{proof}

\begin{claim} \label{claim:rowdivis}
If $m \in \mathbf{M}'$ is admissible, then one of $p^a, \beta,$ or $\gamma_1$ is not divisible by $p$.  Similarly, if $m$ is admissible, one of $p^b$ or $\gamma_2$ is not divisible by $p$.
\end{claim}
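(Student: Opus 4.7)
The plan is to prove the contrapositive in each case: if the divisibility hypotheses fail, we exhibit an explicit $u \in U_m$ with $\chi(n_u) \neq 1$, thereby showing that $U_m \not\subseteq \ker \chi$ and hence $m$ is not admissible. Recall from the definition of $\chi$ restricted to $N_U$ that, for $u = \mm{u_{11}}{u_{12}}{u_{12}}{u_{22}}$, one has $\chi(n_u) = \psi(-Du_{11} + u_{22})$. Throughout we may use Claim \ref{claim:cgeb} to replace $\min(b,c)$ by $b$, together with the defining conditions $a \geq c$ and $p^c \mid \beta$ from the set $\mathbf{M}'$.

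For the first assertion, suppose all of $p^a$, $\beta$, and $\gamma_1$ are divisible by $p$ (so $a \geq 1$, $p \mid \beta$, and $p \mid \gamma_1$). I would take
\[u = \left(\begin{array}{cc} p^{-1} & 0 \\ 0 & 0 \end{array}\right)\]
and check the three defining conditions of $U_m$ via (\ref{eqn:umcondexplicit}). Integrality of $uy$ is immediate since $a \geq 1$ and $p \mid \beta$. For the condition $(\gamma_1, \gamma_2) uy \in p^c \mathbf{Z}_p \oplus p^b \mathbf{Z}_p$, the first coordinate $p^{a-1}\gamma_1$ lies in $p^a \mathbf{Z}_p \subseteq p^c \mathbf{Z}_p$ using $p \mid \gamma_1$ and $a \geq c$, while the second coordinate $p^{-1}\beta\gamma_1$ has $p$-adic valuation at least $-1 + c + 1 = c \geq b$ using $p^c \mid \beta$ and $p \mid \gamma_1$. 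Finally, $\chi(n_u) = \psi(-D/p) \neq 1$ since $p \nmid 2D$, giving the required contradiction.

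For the second assertion, suppose both $p^b$ and $\gamma_2$ are divisible by $p$ (so $b \geq 1$ and $p \mid \gamma_2$). The analogous choice
\[u = \left(\begin{array}{cc} 0 & 0 \\ 0 & p^{-1} \end{array}\right)\]
gives $uy = \mm{0}{0}{0}{p^{b-1}} \in M_2(\mathbf{Z}_p)$ and $(\gamma_1,\gamma_2)uy = (0, p^{b-1}\gamma_2) \in p^c\mathbf{Z}_p \oplus p^b \mathbf{Z}_p$ using $p \mid \gamma_2$. Then $\chi(n_u) = \psi(1/p) \neq 1$, again contradicting admissibility.

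There is no real obstacle here: the main step is merely selecting the simplest diagonal $u$ that achieves a nontrivial character value, and verifying the three $U_m$-conditions is a direct computation using the standing hypotheses $a \geq c$, $p^c \mid \beta$, and $c \geq b$. The only point requiring a moment's care is the second coordinate of $(\gamma_1,\gamma_2)uy$ in the first case, where one uses the combined divisibilities $p^c \mid \beta$ and $p \mid \gamma_1$ to offset the denominator $p^{-1}$ coming from $u_{11}$.
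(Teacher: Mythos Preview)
Your proof is correct and essentially identical to the paper's: both arguments exhibit the diagonal matrices $u=\mm{p^{-1}}{0}{0}{0}$ and $u=\mm{0}{0}{0}{p^{-1}}$ in the two respective cases, verify membership in $U_m$ using $a\ge c$, $p^c\mid\beta$, and Claim~\ref{claim:cgeb}, and obtain $\chi(n_u)\neq 1$. Your write-up simply spells out the $U_m$ verifications in a bit more detail than the paper does.
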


\begin{proof}
For the first case, we observe that if $a\ge 1$, $p|\beta$, and $p|\gamma_1$, then by the conditions $c \le a$ and $p^c|\beta$ together with Claim \ref{claim:cgeb}, we have $p^c|p^{a-1}\gamma_1$ and $p^b|p^{-1}\beta\gamma_1$, so $\mm{p^{-1}}{0}{0}{0} \in U_m$ and thus $U_m \not\subseteq \ker \chi$.

For the second case, we similarly find $\mm{0}{0}{0}{p^{-1}} \in U_m$, so $U_m \not\subseteq \ker \chi$.
\end{proof}

\begin{claim}
If $m \in \mathbf{M}'$ is admissible, then we have either $a = c$ or $b = c$.
\end{claim}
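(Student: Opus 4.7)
The plan is to establish the contrapositive: if $a > c$ and $b < c$, then $m$ is not admissible.  First I would observe that Claim \ref{claim:cgeb} already gives $c \ge b$, so the two strict inequalities $a > c$ and $b < c$ together force $c \ge 1$, which is what makes a nontrivial character value on $U_m$ possible.

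The core step is to exhibit an explicit $u \in U_m$ with $\chi(u) \ne 1$.  Since $\chi(u) = \psi(-Du_{11} + u_{22})$ depends only on the diagonal of $u$, I would try the test matrix $u = \diag(x, 0)$, aiming for $v(x) = -1$ (which makes $\chi(u) = \psi(-Dx) \ne 1$ since $p \nmid 2D$ and $\psi$ has conductor $\mathbf Z_p$).  Substituting this $u$ into (\ref{eqn:umcondexplicit}) and using $\min(b,c) = b$ from Claim \ref{claim:cgeb}, the conditions defining $u \in U_m$ collapse to the four inequalities
\[
v(x) \ge -a, \quad v(x) \ge -v(\beta), \quad v(x) \ge c - a - v(\gamma_1), \quad v(x) \ge b - v(\beta) - v(\gamma_1),
\]
the first two from $uy \in M_2(\mathbf Z_p)$ and the last two from the $\gamma$-condition, where the second row of $uy$ vanishes so $\gamma_2$ drops out entirely.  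Under the standing hypotheses $a \ge c+1 \ge 2$, $v(\beta) \ge c$ (from $p^c \mid \beta$), $c \ge b+1 \ge 1$, and $v(\gamma_1) \ge 0$ (from $\gamma_1 \in \mathbf Z_p$), each of these four right-hand sides is at most $-1$, so the choice $v(x) = -1$ is permissible.  This produces $u \in U_m$ with $\chi(u) \ne 1$, contradicting admissibility.

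The main obstacle is recognizing the right test matrix.  The choice $\diag(x,0)$ is dictated by the hypothesis $a > c \ge 1$, which guarantees enough room to take $v(x) < 0$; the symmetric candidate $\diag(0,y)$ would fail precisely in the case $b = 0$, which is one of the cases being ruled out.  Once the test matrix is in hand, the verification is purely routine valuation arithmetic.
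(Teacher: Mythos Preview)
Your proof is correct and follows exactly the same approach as the paper: assuming $a>c>b$, you exhibit the test matrix $\diag(x,0)$ with $v(x)=-1$ in $U_m\setminus\ker\chi$, which is precisely the paper's choice $u=\mm{p^{-1}}{0}{0}{0}$.  Your valuation bookkeeping is just a more detailed version of the paper's one-line verification that this $u$ lies in $U_m$.
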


\begin{proof}
Assume that $a>c>b$.  Since $a \ge c+1 \ge 1$, $p|\beta$, and $c \ge b+1$, it follows that $\mm{p^{-1}}{0}{0}{0} \in U_m$.  So either $a = c$ or $c = b$.
\end{proof}

We may now state the classification of the admissible $m \in \mathbf{M}$ and corresponding measures $B(m)$.

\begin{remark} \label{remark:entrydiv}
Due to the right $\GL_3(\mathbf{Z}_p)$-action, for an element $m \in \mathbf{M}$, the matrix entries $\beta$ and $\gamma_1$ of $m$ can be thought of as elements of $\mathbf{Z}/p^a\mathbf{Z}$, and the entry $\gamma_2$ of $m$ can be thought of as an element of $\mathbf{Z}/p^b\mathbf{Z}$.  It makes sense, therefore, to write $\beta^{-1},\gamma_1^{-1},$ and $\gamma_2^{-1}$ for the inverses in these rings, taken to lie in $[0,p^a-1]$ or $[0,p^b-1]$, whenever these entries are invertible.  We will use this notation in the following classification result.
\end{remark}

\begin{theorem}[Inert case] \label{thm:inertclassify}
Suppose $p$ inert, and $m \in \mathbf{M}$ is admissible.  Then $m$ is of the form
\[\(\begin{array}{ccc} p^a & 0 & \gamma_1 \\  & 1 & 0 \\  &  & p^a\end{array}\),\]
where $p \nmid \gamma_1$ if $a \geq 1$.  The cumulative measure for such an $m$ is $B(m)=p^{2a}$.
\end{theorem}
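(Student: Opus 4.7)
The plan is to classify admissible $m \in \mathbf{M}$ by case analysis on the shape forced by the preliminary claims, exhibiting explicit elements $u \in U_m$ to test the character $\chi$. The inert hypothesis will enter through the fact that $D$ is a non-square modulo $p$, and will be used in both main cases. Combining the established claims, we have $c \geq b$, either $a = c$ or $b = c$, and the row-divisibility conditions of Claim \ref{claim:rowdivis}; this naturally splits into Case A ($a = c$) and Case B ($b = c < a$).

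In Case A, the conditions $p^c \mid \beta$ and $\beta \in [0, p^a - 1]$ force $\beta = 0$. To rule out $b \geq 1$, observe that Claim \ref{claim:rowdivis} then forces $\gamma_1, \gamma_2 \in \mathbf{Z}_p^\times$. Setting $u_{12} = p^{-b}$ and solving for $u_{11}, u_{22}$ modulo $\mathbf{Z}_p$ from the integrality conditions on $(\gamma_1, \gamma_2)uy$ yields $u_{11} \equiv -\gamma_1^{-1}\gamma_2 p^{-b}$ and $u_{22} \equiv -\gamma_2^{-1}\gamma_1 p^{-b}$ modulo $\mathbf{Z}_p$. A direct computation then gives
\[\chi(u) = \psi\!\left(p^{-b}\gamma_1^{-1}\gamma_2^{-1}(D\gamma_2^2 - \gamma_1^2)\right),\]
which is nontrivial because, $p$ being inert, $(\gamma_1/\gamma_2)^2 \not\equiv D \pmod{p}$, so $v_p(D\gamma_2^2 - \gamma_1^2) = 0 < b$. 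Hence $b = 0$, which forces $\gamma_2 = 0$; when $a \geq 1$, Claim \ref{claim:rowdivis} additionally forces $p \nmid \gamma_1$. This recovers the stated normal form.

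For Case B, I will show that no $m$ is admissible. Writing $\tilde\beta = \beta/p^c \in \mathbf{Z}_p$, set $e = v_p(\gamma_1 - \tilde\beta \gamma_2)$ and $k = \max(0, c - e)$, and consider
\[u = \mm{p^{-a+k}}{-p^{-a+k}\tilde\beta}{-p^{-a+k}\tilde\beta}{p^{-a+k}\tilde\beta^2}.\]
Using $\beta = p^c \tilde\beta$, a direct computation shows that $uy = \mm{p^k}{0}{-p^k\tilde\beta}{0}$ and $(\gamma_1, \gamma_2)uy = (p^k(\gamma_1 - \tilde\beta\gamma_2), 0)$, so the choice of $k$ ensures $u \in U_m$. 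But $\chi(u) = \psi(p^{-a+k}(\tilde\beta^2 - D))$, and inertness again gives $v_p(\tilde\beta^2 - D) = 0$, so the argument has valuation $-a + k \leq -a + c < 0$. Hence $\chi(u) \neq 1$, contradicting admissibility.

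Finally, for $m$ of the stated normal form, a direct computation yields $U_m = \{u = {}^tu : u_{11}, u_{12}, u_{22} \in \mathbf{Z}_p\}$ of measure $1$, giving $B(m) = p^{2c + \min(b,c)} \cdot 1 = p^{2a}$. The hardest step will be the design of $u$ in Case B: the choices $u_{12} = -u_{11}\tilde\beta$ and $u_{22} = u_{11}\tilde\beta^2$ make the cross-terms in $uy$ vanish, reducing the constraint on $(\gamma_1, \gamma_2)uy$ to the single condition that fixes $k$, while the ratio $u_{22}/u_{11} = \tilde\beta^2$ arranges for the character value to depend on $\tilde\beta^2 - D$, which is a $p$-adic unit precisely by the inert hypothesis.
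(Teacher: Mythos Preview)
Your proof is correct and follows the same overall strategy as the paper: use the preliminary claims to narrow the possibilities, then exhibit explicit test elements $u\in U_m$ whose $\chi$-value detects the quadratic obstruction $\tilde\beta^2\equiv D$ or $\gamma^2\equiv D$ that fails in the inert case.

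The organization differs slightly. The paper treats all $b\ge 1$ uniformly with the single test element $u_0=p^{-b}\mm{-\gamma^{-1}}{1}{1}{-\gamma}$ (this is your Case~A element rewritten with $\gamma=\gamma_1\gamma_2^{-1}$), and then handles $b=0$ by splitting into $b=c=0$ (forcing $a=0$ via $u=p^{-a}\mm{1}{-\beta}{-\beta}{\beta^2}$) and $b=0,\,a=c>0$. You instead split by $a=c$ versus $b=c<a$. Your Case~B construction, with the parameter $k=\max(0,c-e)$, absorbs both of the paper's $b=c<a$ subcases into a single argument: for $b=c=0$ it reduces to $k=0$ and reproduces the paper's element exactly, while for $b=c\ge 1$ it gives a different (but equally valid) witness than the paper's $u_0$. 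This unification is a mild simplification for the inert theorem in isolation; the paper's organization is chosen because its claims are shared verbatim with the split-case Theorem~\ref{thm:splitclassify}.
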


\begin{theorem}[Split case] \label{thm:splitclassify}
Suppose $p$ is split.  There are several cases:
\begin{itemize}
\item ($b = 0, c = 0$.)  The admissible $m \in \mathbf{M}$ are then of the form
\[\(\begin{array}{ccc} p^a & \beta & \gamma_1 \\  & 1 & 0 \\  &  & 1\end{array}\),\]
where $\beta^2 \equiv D \pmod {p^a}$.  The cumulative measure for such an $m$ is $B(m)=p^a$.
\item ($b = 0, a = c, a \ge 1$.) The admissible $m \in \mathbf{M}$ are then of the form
\[\(\begin{array}{ccc} p^a & 0 & \gamma_1 \\  & 1 & 0 \\  &  & p^a\end{array}\),\]
where $p \nmid \gamma_1$.  The cumulative measure for such an $m$ is $B(m)=p^{2a}$.
\item ($a = c \ge b \ge 1$.)  The admissible $m \in \mathbf{M}$ are then of the form
\[\(\begin{array}{ccc} p^a & 0 & \gamma_1 \\  & p^b & \gamma_2 \\  &  & p^a\end{array}\),\]
where $(\gamma_1,p)= (\gamma_2,p) = 1$, and $(\gamma_1\cdot\gamma_2^{-1})^2 \equiv D \pmod {p^b}$.  The cumulative measure for such an $m$ is $B(m)=p^{2a+2b}$.
\item ($a > c =b \geq 1$.) Since in this case $p^c|\beta$, we may define $\beta' = p^{-c}\beta$.  The admissible $m \in \mathbf{M}$ are then of the form
\[\(\begin{array}{ccc} p^a & \beta & \gamma_1 \\  & p^c & \gamma_2 \\  &  & p^c\end{array}\),\]
where $p^c | \beta$, $(\gamma_1,p) = (\gamma_2,p) = 1$, $(\gamma_1\cdot\gamma_2^{-1})^2 \equiv D \pmod{p^c}$, $(\beta')^2 \equiv D \pmod{p^{a-c}}$, and $\beta' + (\gamma_1\cdot\gamma_2^{-1}) \equiv 0 \pmod{p^{\min{(a-c,c)}}}$.  The cumulative measure for such an $m$ is $B(m)=p^{a+3c}$.
\end{itemize}
\end{theorem}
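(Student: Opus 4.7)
The plan is to prove Theorem \ref{thm:splitclassify} by case analysis matching the four cases in the statement, which are already distinguished by the constraints $c \ge b$ and ($a = c$ or $b = c$) established in the preceding claims. In each case the strategy is: (i) use the explicit formulas \eqref{eqn:umcondexplicit} to write the defining conditions of $U_m$ as linear congruences on $(u_{11}, u_{12}, u_{22})$; (ii) parametrize $U_m$; (iii) impose $\chi(n_u) = 1$ on $U_m$, which since $v = 0$ reduces to $-Du_{11} + u_{22} \in \mathbf{Z}_p$; and (iv) compute $\mathrm{meas}(U_m)$ to get $B(m) = p^{2c+b} \mathrm{meas}(U_m)$.

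Cases 1--3 are direct. In Cases 2 and 3, the combination $a = c$ with $p^c \mid \beta$ and $\beta \in [0, p^a - 1]$ forces $\beta = 0$, so $y$ is diagonal and the constraints decouple. In Case 1 ($b = c = 0$), writing $u_{11} = p^{-a} x_{11}$ yields $u_{12} \equiv -u_{11}\beta \pmod{\mathbf{Z}_p}$ and $u_{22} \equiv -u_{12}\beta \pmod{\mathbf{Z}_p}$, so $-Du_{11} + u_{22} \equiv p^{-a} x_{11}(\beta^2 - D) \pmod{\mathbf{Z}_p}$, giving the admissibility condition $\beta^2 \equiv D \pmod{p^a}$ and $\mathrm{meas}(U_m) = p^a$.

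Case 4 is the technical heart. With $\beta = p^c \beta'$, I would introduce new variables $X = p^a u_{11}$, $Y = p^a u_{12}$, $W = (X\beta' + Y)/p^{a-c}$, and $Z$ via $u_{22} = -p^{-a} Y \beta' + p^{-c} Z$. Then integrality of $uy$ becomes $X, W, Z \in \mathbf{Z}_p$, while the two $(\gamma_1, \gamma_2) u y$ conditions become
\begin{equation*}
(\gamma_1 - \gamma_2 \beta') X + \gamma_2 p^{a-c} W \equiv 0 \pmod{p^c}, \qquad \gamma_1 W + \gamma_2 Z \equiv 0 \pmod{p^c}.
\end{equation*}
In the subcase $\gamma_1 - \gamma_2 \beta' \in \mathbf{Z}_p^\times$, the variables $X$ and $Z$ are each determined modulo $p^c$ by the free parameter $W \in \mathbf{Z}_p$. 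Substituting into the admissibility expression $-Du_{11} + u_{22} = p^{-a}(\beta'^2 - D) X + p^{-c}(Z - W\beta')$ and requiring it to lie in $\mathbf{Z}_p$ for all free parameters yields $\beta'^2 \equiv D \pmod{p^{a-c}}$ and, via the identity $(\beta'^2 - D) \gamma_2^2 + (\gamma_1 - \gamma_2 \beta')(\gamma_1 + \gamma_2 \beta') = \gamma_1^2 - D \gamma_2^2$, also $(\gamma_1/\gamma_2)^2 \equiv D \pmod{p^c}$. The third condition $\beta' + \gamma_1/\gamma_2 \equiv 0 \pmod{p^{\min(a-c, c)}}$ then follows from the factorization $\beta'^2 - (\gamma_1/\gamma_2)^2 \equiv 0 \pmod{p^{\min(a-c, c)}}$, using that $\beta' - \gamma_1/\gamma_2$ is a unit.

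I expect the main obstacle to be ruling out the complementary subcase $\gamma_1 - \gamma_2 \beta' = p^j \eta$ with $\eta \in \mathbf{Z}_p^\times$ and $j \ge 1$ (equivalently $\beta' \equiv \gamma_1/\gamma_2 \pmod p$). Here $X$ is determined only modulo $p^{c-j}$, so the $X$-variation in the admissibility expression already forces $\beta'^2 \equiv D \pmod{p^{a+j-c}}$; even granting this, taking $W$ a unit with the other free parameters zero produces a term $-p^{-c}(\gamma_1/\gamma_2 + \beta')W$ of valuation exactly $-c$ (since $\gamma_1/\gamma_2 + \beta' \equiv 2\beta'$ is a unit), while the remaining contribution to the coefficient of $W$ has valuation at least $a - 2c > -c$ and cannot cancel it. Hence admissibility fails in this subcase. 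Finally, tracking the Jacobian $p^{a + 2c}$ of $(X, W, Z) \mapsto (u_{11}, u_{12}, u_{22})$ and the volume $p^{-2c}$ of the $(X, W, Z)$ solution set gives $\mathrm{meas}(U_m) = p^a$ and hence $B(m) = p^{a + 3c}$.
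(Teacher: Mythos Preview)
Your Case 4 approach differs from the paper's. The paper derives each necessary congruence by exhibiting a specific test element of $U_m$ (one matrix for $\beta_0^2 \equiv D$, another for $-\beta_0\gamma \equiv D$), and then proves sufficiency by a separate change of variables that splits into the subcases $a-b \ge b$ and $b \ge a-b$. Your uniform $(X,W,Z)$ substitution handles necessity and sufficiency at once in the ``main subcase'' and avoids that case split, which is a genuine streamlining: after your substitution the admissibility functional is visibly linear in the free parameters $X',W,Z'$, and the two nontrivial coefficients are $p^{c-a}(\beta'^2-D)$ and $-p^{-c}(\gamma-\beta')^{-1}(\gamma^2-D)$, reading off conditions (A) and (B) directly.

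There is, however, a gap in your complementary-subcase argument. The step ``taking $W$ a unit with the other free parameters zero'' presumes $W$ ranges freely over $\mathbf{Z}_p$, but when $j > a-c$ and $a < 2c$ the congruence $p^j\eta X + \gamma_2 p^{a-c} W \equiv 0 \pmod{p^c}$ forces $W \in p^{\min(j,c)-(a-c)}\mathbf{Z}_p$, so $W$ cannot be a unit. In that regime the roles of $X$ and $W$ swap; writing $W = p^{\min(j,c)-(a-c)} W'$ with $W'$ free and repeating your valuation argument on the $W'$-coefficient (now of valuation $\min(j,c) - a < 0$ rather than $-c$) still kills admissibility, so the patch is short. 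The paper avoids this issue entirely: having established all three necessary congruences by test elements, it deduces $\beta_0-\gamma \in \mathbf{Z}_p^\times$ \emph{a posteriori} from $\beta_0(\beta_0-\gamma) \equiv 2D$, so the complementary subcase never needs direct analysis. You should also record the one-line converse: the theorem's condition $\beta'+\gamma \equiv 0 \pmod{p}$ combined with $p\nmid 2D$ forces $\beta'-\gamma$ to be a unit, so your characterization matches the theorem's.
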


\begin{remark}
The cumulative measure is $p^{a + 2b + c}$ in all cases.
\end{remark}

\begin{remark}
These conditions are all invariant under adding a multiple of $p^c$ to either $\gamma_1$ or $\gamma_2$.
\end{remark}

We now prove a succession of claims that will establish Theorems \ref{thm:inertclassify} and \ref{thm:splitclassify}.
\begin{claim}
Suppose that $m \in \mathbf{M}'$ has $b \ge 1$.  If $p$ is inert, $m$ is not admissible.  If $p$ is split and $m$ is admissible, then $(\gamma_1\cdot \gamma_2^{-1})^2 \equiv D \pmod {p^b}$.
\end{claim}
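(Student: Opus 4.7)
The plan is to exhibit an explicit symmetric matrix $u \in U_m$ whose value of $\chi(n_u) = \psi(-Du_{11} + u_{22})$ measures $\gamma_1^2 - D\gamma_2^2 \pmod{p^b}$, and to deduce the stated condition from the requirement $U_m \subseteq \ker\chi$. Once this is done, the inert case follows because $D$ is a non-square modulo $p$ when $p$ is inert in $L = \mathbf{Q}(\sqrt{D})$, so no solution of $x^2 \equiv D \pmod{p^b}$ exists for $b \geq 1$.

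First, since $b \geq 1$, Claim~\ref{claim:rowdivis} forces $\gamma_2 \in \mathbf{Z}_p^\times$, so $\gamma_2^{-1} \in \mathbf{Z}_p$. I would then set
\[
u_{11} = p^{-b}\gamma_2^{\,2}, \qquad u_{12} = -p^{-b}\gamma_1\gamma_2, \qquad u_{22} = p^{-b}\gamma_1^{\,2},
\]
and verify directly that $u = \bigl(\begin{smallmatrix} u_{11} & u_{12} \\ u_{12} & u_{22}\end{smallmatrix}\bigr) \in U_m$ using the explicit formulas (\ref{eqn:umcondexplicit}). The condition $uy \in M_2(\mathbf{Z}_p)$ reduces to checking that each of $p^{a-b}\gamma_2^{\,2}$, $p^{a-b}\gamma_1\gamma_2$, $p^{-b}\gamma_2^{\,2}\beta - \gamma_1\gamma_2$, and $-p^{-b}\gamma_1\gamma_2\beta + \gamma_1^{\,2}$ is integral; the first two are integral because $a \geq c \geq b$ (Claim~\ref{claim:cgeb}), and the latter two because $p^c \mid \beta$ together with $c \geq b$ gives $p^{-b}\beta \in \mathbf{Z}_p$. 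For the condition $(\gamma_1,\gamma_2)uy \in p^c\mathbf{Z}_p \oplus p^b\mathbf{Z}_p$, both coordinates actually vanish identically: the first coordinate is $p^{a-b}(\gamma_2^{\,2}\gamma_1 - \gamma_1\gamma_2^{\,2}) = 0$, and the second is $(p^{-b}\gamma_2^{\,2}\beta - \gamma_1\gamma_2)\gamma_1 + (-p^{-b}\gamma_1\gamma_2\beta + \gamma_1^{\,2})\gamma_2 = 0$ after cancellation.

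Having $u \in U_m$ in hand, I would then compute
\[
\chi(n_u) = \psi(-Du_{11} + u_{22}) = \psi\!\left(p^{-b}\bigl(\gamma_1^{\,2} - D\gamma_2^{\,2}\bigr)\right),
\]
so $U_m \subseteq \ker\chi$ implies $\gamma_1^{\,2} \equiv D\gamma_2^{\,2} \pmod{p^b}$, which is the stated congruence $(\gamma_1\gamma_2^{-1})^2 \equiv D \pmod{p^b}$ (using that $\gamma_2$ is a unit). When $p$ is inert, $D$ is not a square modulo $p$, so this congruence has no solution for $b \geq 1$; hence $m$ is not admissible.

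I do not anticipate a significant obstacle: the main content is picking the right $u$, which is essentially forced by the desire to make the two $(\gamma_1,\gamma_2)uy$ coordinates vanish while getting a nontrivial contribution to $-Du_{11} + u_{22}$. The choice $(u_{11}, u_{12}, u_{22}) = p^{-b}(\gamma_2^{\,2}, -\gamma_1\gamma_2, \gamma_1^{\,2})$ is the unique (up to scalar) symmetric rank-one matrix killing the vector $(\gamma_1,\gamma_2)$ on the right, which explains both the vanishing of the $(\gamma_1,\gamma_2)uy$ conditions and the emergence of $\gamma_1^{\,2} - D\gamma_2^{\,2}$ from $\chi$.
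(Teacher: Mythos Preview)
Your argument is correct and follows essentially the same strategy as the paper: exhibit a single test element $u \in U_m$ whose $\chi$-value detects $\gamma_1^2 - D\gamma_2^2 \pmod{p^b}$. The paper's choice is $u_0 = p^{-b}\left(\begin{smallmatrix} -\gamma^{-1} & 1 \\ 1 & -\gamma \end{smallmatrix}\right)$ with $\gamma = \gamma_1\gamma_2^{-1}$, which is precisely $-\gamma_1^{-1}\gamma_2^{-1}$ times your matrix; since both $\gamma_1$ and $\gamma_2$ are units (by Claim~\ref{claim:rowdivis}, using $a\ge c\ge b\ge 1$ and $p^c\mid\beta$), the two test elements are unit scalar multiples of one another and yield the same conclusion.
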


\begin{proof} \label{claim:bge1gammanecc}
If $b \ge 1$, then by Claim \ref{claim:cgeb}, $a \ge c \ge 1$, so by Claim \ref{claim:rowdivis} and Lemma \ref{lem:lambdanonvan}, $p$ cannot divide either $\gamma_1$ or $\gamma_2$, so $\gamma=\gamma_1\cdot \gamma_2^{-1}$ is in $\mathbf{Z}_p^\times$.  Observe that $u_0=p^{-b}\mm{-\gamma^{-1}}{1}{1}{-\gamma} \in U_m$.  Moreover, $\chi(u_0) = \psi(-p^{-b}\gamma^{-1}(\gamma^2-D))$.  To have $u_0\in\ker\chi$, we need $\gamma^2 \equiv D\pmod{p^b}$.  In the inert case, this is not possible.
\end{proof}

We will maintain this notation: for $m \in \mathbf{M}'$, if $b \ge 1$, we set $\gamma = \gamma_1\cdot\gamma_2^{-1} \in \mathbf{Z}_p^\times$.

\begin{claim} \label{claim:bge1gammasuff}
For $m \in \mathbf{M}'$, suppose that $b \geq 1$, $p$ splits in $L$, and $c = a$.  Then $(\gamma_1\cdot\gamma_2^{-1})^2 \equiv D \pmod{p^b}$ is a necessary and sufficient condition for $m$ to be admissible and, in this case, we have $B(m)=p^{2c + 2b}$.
\end{claim}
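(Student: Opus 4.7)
The necessity of $(\gamma_1\gamma_2^{-1})^2 \equiv D \pmod{p^b}$ is already established by Claim \ref{claim:bge1gammanecc}, so my plan is to prove sufficiency and compute $B(m)$ by a direct calculation from the defining conditions of $U_m$. First, I would exploit the hypotheses to simplify those conditions. Since $a = c$ and $p^c \mid \beta$ by the definition of $\mathbf{M}'$, we have $\beta = p^a\beta''$ with $\beta'' \in \mathbf{Z}_p$. Using (\ref{eqn:umcondexplicit}), the condition $uy \in M_2(\mathbf{Z}_p)$ collapses to $u_{11} \in p^{-a}\mathbf{Z}_p$, $u_{12} \in p^{-b}\mathbf{Z}_p$, and $u_{22} \in p^{-b}\mathbf{Z}_p$, because the $u_{ij}\beta$ terms are automatically integral. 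With $\min(b,c)=b$ and $\gamma_1,\gamma_2 \in \mathbf{Z}_p^\times$ (from Claim \ref{claim:rowdivis} together with $b \geq 1$), the condition $(\gamma_1,\gamma_2)uy \in p^c\mathbf{Z}_p \oplus p^b\mathbf{Z}_p$ reduces, after dividing by $p^a$ and $p^b$ respectively, to
\begin{align*}
\gamma_1 u_{11} + \gamma_2 u_{12} &\in \mathbf{Z}_p,\\
\gamma_1 u_{12} + \gamma_2 u_{22} &\in \mathbf{Z}_p;
\end{align*}
the remaining $\beta$-contributions in the second entry are automatically in $p^b\mathbf{Z}_p$ once the first line holds.

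Next, I would extract the admissibility criterion by solving these linear relations modulo $\mathbf{Z}_p$. Since $\gamma_1,\gamma_2$ are units, we obtain $u_{11} \equiv -\gamma^{-1} u_{12}$ and $u_{22} \equiv -\gamma u_{12} \pmod{\mathbf{Z}_p}$. Substituting into $\chi(n_u) = \psi(-Du_{11}+u_{22})$ and using that $\psi$ is trivial on $\mathbf{Z}_p$ yields
\[\chi(n_u) = \psi\bigl(-\gamma^{-1}(\gamma^2-D)u_{12}\bigr).\]
As $u_{12}$ ranges freely over $p^{-b}\mathbf{Z}_p$, the containment $U_m \subseteq \ker\chi$ holds if and only if $\gamma^{-1}(\gamma^2-D) \in p^b\mathbf{Z}_p$, which, since $\gamma$ is a unit, is exactly $\gamma^2 \equiv D \pmod{p^b}$.

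Finally, I would compute the Haar measure of $U_m$ in the space of symmetric matrices by parameterizing: $u_{12}$ ranges over $p^{-b}\mathbf{Z}_p$ with total measure $p^b$, and for each fixed $u_{12}$ both $u_{11}$ and $u_{22}$ lie in single cosets of $\mathbf{Z}_p$ (inside $p^{-a}\mathbf{Z}_p$ and $p^{-b}\mathbf{Z}_p$, respectively), each of measure $1$. Hence $\mathrm{meas}(U_m) = p^b$, and $B(m) = p^{2c+b}\cdot p^b = p^{2c+2b}$. The only real subtlety is the careful absorption of the $\beta$-contributions in the very first step, which uses $p^a \mid \beta$ together with $a \ge b$; beyond that, the argument is a direct unwinding.
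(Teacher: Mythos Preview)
Your argument is correct and follows essentially the same approach as the paper's proof: both directly unwind the defining conditions of $U_m$ using $a=c$ and $p^c\mid\beta$, reduce to the two linear congruences $\gamma_1 u_{11}+\gamma_2 u_{12}\in\mathbf{Z}_p$ and $\gamma_1 u_{12}+\gamma_2 u_{22}\in\mathbf{Z}_p$, and then read off both the admissibility condition and the measure from the free parameter $u_{12}\in p^{-b}\mathbf{Z}_p$. Your use of the factorization $\beta=p^a\beta''$ makes the absorption of the $\beta$-terms slightly more explicit than in the paper, but the logic is the same.
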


\begin{proof}
Let $u = \mm{u_{11}}{u_{12}}{u_{12}}{u_{22}}$, and suppose that $u \in U_m$.  By the integrality condition on $uy$, we have $p^au_{11}=p^cu_{11}$ integral, so $\beta u_{11}$ is also integral, and (looking at the upper-right entry of $uy$) $p^bu_{12}$ is integral.  So since $c \ge b$, $\beta u_{12}$ is integral, and (looking at the bottom-right entry of $uy$) $p^bu_{22}$ is integral.  By the condition on $(\gamma_1,\gamma_2)uy$ and the assumption $a=c$, we need $u_{11}p^a\gamma_1+u_{12}p^a\gamma_2 \in p^a \mathbf{Z}_p$, or $u_{11}\gamma+u_{12} \in \mathbf{Z}_p$.  In particular, since $p^bu_{12}$ is integral, $p^bu_{11}$ is also integral.  Therefore, $p^bu$ is integral.

We also need $(u_{11}\beta+u_{12}p^b)\gamma_1 + (u_{12}\beta+u_{22}p^b)\gamma_2$ in $p^b \mathbf{Z}_p$, or equivalently
\[(u_{11}\gamma+u_{12})\beta+(u_{12}\gamma + u_{22})p^b\]
in $p^b \mathbf{Z}_p.$ By the preceding calculation, this reduces to $u_{12}\gamma + u_{22} \in \mathbf{Z}_p$.  By setting $u_{12} = p^{-b}\alpha$ and solving for the other entries, we deduce that up to addition of a diagonal integral matrix, $u$ has the form $p^{-b}\alpha\mm{-\gamma^{-1}}{1}{1}{-\gamma}$ for $\alpha \in \mathbf{Z}_p$.  We checked in Claim \ref{claim:bge1gammanecc} that $\gamma^2 \equiv D \pmod{p^b}$ is a necessary condition.  When $\gamma$ satisfies this congruence, any $u$ of the form just calculated has $\chi(u)=1$, so $m$ is admissible under this condition.

Once the choice of $u_{12} \in p^{-b}\mathbf{Z}_p$ has been made, the choices of $u_{11}$ and $u_{22}$ have measure 1.  So using Proposition \ref{prop:uprimecond}, we obtain a cumulative measure of $p^{2c+2b}$.

\end{proof}

For $m \in \mathbf{M}'$, we set $\beta_0 = p^{-c}\beta \in \mathbf{Z}_p$.

\begin{claim}
For $m \in \mathbf{M}'$, assume that $a > c=b \ge 1$ and that $p$ is split in $L$.  Then if $m$ is admissible, we must have $(\beta_0,p) = 1$ and $\beta_0^2 \equiv D \pmod{p^{a-b}}$.
\end{claim}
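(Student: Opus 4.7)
The claim imposes necessary conditions on admissible $m \in \mathbf{M}'$, so I would prove it by contrapositive: if either $p \mid \beta_0$ or $\beta_0^2 \not\equiv D \pmod{p^{a-b}}$, I will exhibit an explicit symmetric $u \in U_m$ with $\chi(u) \neq 1$. In both cases $u$ is chosen so that $uy$ has a simple form, making the integrality and $(\gamma_1,\gamma_2)uy$ conditions from the definition of $U_m$ transparent, while making $-Du_{11}+u_{22}$ pair nontrivially with $\psi$.

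For the divisibility condition, I would try the simplest candidate $u = \mm{p^{-1}}{0}{0}{0}$. Working through (\ref{eqn:umcondexplicit}), integrality of $uy$ and the first component of the $(\gamma_1,\gamma_2)uy$ condition follow at once from $a \ge 2$ and $a > b$. The second component of the $(\gamma_1,\gamma_2)uy$ condition reduces to $p^{b-1}\beta_0\gamma_1 \in p^b \mathbf{Z}_p$, and this is exactly where the hypothesis $p \mid \beta_0$ is needed to put $u$ in $U_m$. But then $\chi(u) = \psi(-Dp^{-1}) \neq 1$ since $p \nmid 2D$, so $m$ cannot be admissible.

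Now assume $\beta_0 \in \mathbf{Z}_p^\times$. The naive candidate $u_0 = p^{-a}\mm{1}{-\beta_0}{-\beta_0}{\beta_0^2}$ has the pleasant feature that the off-diagonal entries of $u_0 y$ cancel, leaving $u_0 y$ integral; the downside is that the condition $(\gamma_1,\gamma_2)u_0 y \in p^c\mathbf{Z}_p \oplus p^c\mathbf{Z}_p$ forces $\beta_0 \equiv \gamma \pmod{p^b}$, which is not among our hypotheses. To remove this obstruction I would instead take $u = p^b u_0 = p^{b-a}\mm{1}{-\beta_0}{-\beta_0}{\beta_0^2}$, producing $uy = \mm{p^b}{0}{-p^b\beta_0}{0}$ and $(\gamma_1,\gamma_2)uy = p^b(\gamma_1 - \beta_0\gamma_2, 0)$; since $\gamma_1 - \beta_0\gamma_2 \in \mathbf{Z}_p$ and $c = b$, this automatically lies in $p^c\mathbf{Z}_p \oplus p^c\mathbf{Z}_p$, and $u \in U_m$. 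Then $\chi(u) = \psi(p^{b-a}(\beta_0^2 - D))$, and triviality of this character is exactly the condition $\beta_0^2 \equiv D \pmod{p^{a-b}}$.

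The main subtlety is choosing the correct power of $p$ by which to scale the rank-one matrix $\mm{1}{-\beta_0}{-\beta_0}{\beta_0^2}$: too small an exponent places $u$ outside $U_m$, while too large trivializes $\chi(u)$ and yields no information. The correct scaling $p^{b-a}$ is forced by the equality $c = b$ governing the target in the $(\gamma_1,\gamma_2)uy$ condition. I expect an analogous, slightly more elaborate, scaling argument will be needed to handle the companion congruence $\beta_0 + \gamma_1\gamma_2^{-1} \equiv 0 \pmod{p^{\min(a-c,c)}}$ mentioned in Theorem \ref{thm:splitclassify}.
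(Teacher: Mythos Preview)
Your proposal is correct and matches the paper's proof essentially verbatim: both parts use precisely the same test matrices $u = \mm{p^{-1}}{0}{0}{0}$ and $u = p^{b-a}\mm{1}{-\beta_0}{-\beta_0}{\beta_0^2}$, with the same verifications that each lies in $U_m$ and the same computation of $\chi(u)$. Your additional remarks about the scaling being forced by the $c=b$ condition and the anticipated companion argument are accurate commentary but go beyond what the paper records.
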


\begin{proof}
If $p|\beta_0$, we observe that $\mm{p^{-1}}{0}{0}{0} \in U_m$, so we must have $(\beta_0,p)=1$.  For the second claim, observe that $u=p^{b-a}\mm{1}{-\beta_0}{-\beta_0}{\beta_0^2} \in U_m$ and $\chi(u) = \psi(\frac{\beta_0^2-D}{p^{a-b}})$.  If $m$ is admissible, we must have $\beta_0^2 \equiv D \pmod{p^{a-b}}$.
\end{proof}

\begin{claim}
For $m \in \mathbf{M}'$, assume that $a > c=b \ge 1$ and that $p$ is split in $L$.  Then if $m$ is admissible, we must have $-\beta_0(\gamma_1\cdot\gamma_2^{-1}) \equiv D \pmod {p^{\min(a-b,b)}}.$
\end{claim}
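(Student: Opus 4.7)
The plan is to mirror the preceding claims in this sequence: exhibit a specific $u \in U_m$ and extract a congruence from $\chi(u) = 1$.  For orientation, the previous claim already gives $\beta_0^2 \equiv D \pmod{p^{a-b}}$, and applying Claim \ref{claim:bge1gammanecc} to the present setting also gives $\gamma^2 \equiv D \pmod{p^b}$, where $\gamma := \gamma_1 \gamma_2^{-1}$.  Setting $k := \min(a-b,\, b)$, these imply $\beta_0^2 \equiv \gamma^2 \pmod{p^k}$, so $\beta_0 \equiv \pm \gamma \pmod{p^k}$ (since $\beta_0, \gamma \in \mathbf{Z}_p^\times$); the target claim amounts to ruling out the $+$ sign.

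To achieve this, I would introduce the element
\[
u \;=\; p^{-k}\mm{0}{1}{1}{-(\beta_0+\gamma)}.
\]
Verifying $u \in U_m$ reduces to two direct checks.  First, $uy \in M_2(\mathbf{Z}_p)$: the nonzero entries of $uy$ are $p^{a-k}$, $p^{b-k}$, and $-\gamma p^{b-k}$, all integral because $k \leq b \leq a$.  Second, $(\gamma_1,\gamma_2)uy$ must lie in $p^b \mathbf{Z}_p \oplus p^b \mathbf{Z}_p$; its second component vanishes identically from $\gamma_1 - \gamma_2\gamma = 0$, while the first component is $\gamma_2 p^{a-k}$, of valuation at least $b$ since $k \leq a-b$.

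Admissibility then forces $\chi(u) = 1$.  Because $u_{11} = 0$ and $u_{22} = -p^{-k}(\beta_0+\gamma)$, this reads $\psi(-p^{-k}(\beta_0+\gamma)) = 1$, i.e.\ $\beta_0 + \gamma \in p^k \mathbf{Z}_p$.  Multiplying by $-\beta_0$ and invoking $\beta_0^2 \equiv D \pmod{p^k}$ (which follows from the previous claim together with $k \leq a-b$) yields $-\beta_0 \gamma \equiv \beta_0^2 \equiv D \pmod{p^k}$, which is the desired congruence.  There is no real obstacle in this argument; the only substantive choice is the form of $u$, which I would motivate by noting that placing a $0$ in the $(1,1)$-entry eliminates the $-Du_{11}$ contribution to $\chi$, leaving $u_{22}$ alone to control $\chi(u)$ and be tuned so that it reads off exactly the quantity $-p^{-k}(\beta_0+\gamma)$ whose vanishing modulo $\mathbf{Z}_p$ is equivalent to the claim.
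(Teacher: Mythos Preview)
Your argument is correct, but it differs from the paper's. The paper chooses the diagonal test element $u = p^{-b'}\mm{1}{0}{0}{-\beta_0\gamma}$ with $b' = \min(a-b,b)$; one checks $u \in U_m$ just as you did, and then $\chi(u) = \psi(p^{-b'}(-D - \beta_0\gamma))$ yields the congruence $-\beta_0\gamma \equiv D \pmod{p^{b'}}$ in one step, with no appeal to the previous claim. Your off-diagonal element instead produces the intermediate congruence $\beta_0 + \gamma \equiv 0 \pmod{p^k}$, which you then convert to the stated form via $\beta_0^2 \equiv D \pmod{p^{a-b}}$. The paper's route is more self-contained for this claim; on the other hand, your intermediate congruence $\beta_0 + \gamma \equiv 0$ is exactly the form in which the condition is recorded in the final classification (Theorem~\ref{thm:splitclassify}), so your path is arguably closer to how the result is eventually used.
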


\begin{proof}
Define $b' = \min(a-b,b)$.  Observe that $u = p^{-b'}\mm{1}{0}{0}{-\beta_0\gamma} \in U_m$.  We must have $\chi(u) = \psi(p^{-b'}(-D-\beta_0\gamma)) = 1$, so $-\beta_0(\gamma_1\cdot\gamma_2^{-1}) \equiv D \pmod {p^{\min(a-b,b)}}$.
\end{proof}

\begin{claim}\label{claim:ageceb}
For $m \in \mathbf{M}'$, assume that $a > c=b \ge 1$ and that $p$ is split in $L$.  If $(\beta_0,p) = 1,$ $\beta_0^2 \equiv D \pmod{p^{a-b}},$ $\gamma^2 \equiv D \pmod{p^c},$ and $-\beta_0(\gamma_1\cdot\gamma_2^{-1}) \equiv D \pmod{p^{\min(a-b,b)}},$ then $m$ is admissible and $B(m) = p^{a+3c}$.
\end{claim}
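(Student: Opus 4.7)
The plan is to parameterize $U_m$ explicitly, then separately use the hypotheses to (a) compute its measure and (b) verify $\chi$ is trivial on it. To parameterize, I would first use the integrality condition $uy \in M_2(\mathbf{Z}_p)$ (in the explicit form given by \eqref{eqn:umcondexplicit}) together with $c=b$ and $\beta = p^b\beta_0$. This forces $u_{11}, u_{12} \in p^{-a}\mathbf{Z}_p$, $u_{11}\beta_0 + u_{12} \in p^{-b}\mathbf{Z}_p$, and $u_{12}\beta_0 + u_{22} \in p^{-b}\mathbf{Z}_p$, so one can write $u_{11} = p^{-a}\alpha$, $u_{12} = -p^{-a}\alpha\beta_0 + p^{-b}\delta$, and $u_{22} = p^{-a}\alpha\beta_0^2 - p^{-b}\delta\beta_0 + p^{-b}\epsilon$ with $\alpha,\delta,\epsilon \in \mathbf{Z}_p$; the resulting set has measure $p^{a+2b}$.

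Next I would impose the two conditions coming from $(\gamma_1,\gamma_2)uy \in p^c\mathbf{Z}_p \oplus p^b\mathbf{Z}_p$. After substitution and dividing through by the unit $\gamma_2$, these reduce to
\begin{align*}
\text{(I)}&\quad \alpha(\gamma - \beta_0) + p^{a-b}\delta \in p^b\mathbf{Z}_p,\\
\text{(II)}&\quad \delta\gamma + \epsilon \in p^b\mathbf{Z}_p.
\end{align*}
The hypotheses $\gamma^2 \equiv D \pmod{p^b}$ and $-\beta_0\gamma \equiv D \pmod{p^{\min(a-b,b)}}$ combine to give $\gamma + \beta_0 \equiv 0 \pmod{p^{\min(a-b,b)}}$, whence $\gamma - \beta_0 \equiv 2\gamma \pmod{p}$, a unit since $p \nmid 2D$. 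Thus (I) cuts $\alpha$ and (II) cuts $\epsilon$ into single residue classes modulo $p^b$, each reducing the measure by a factor of $p^b$, yielding $\mathrm{meas}(U_m) = p^a$.

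Then I would check that $\chi(u) = \psi(u_{22} - Du_{11})$ is trivial on $U_m$. Substitution gives $u_{22} - Du_{11} = p^{-a}\alpha(\beta_0^2 - D) + p^{-b}(\epsilon - \delta\beta_0)$; writing $\beta_0^2 - D = p^{a-b}A$ with $A \in \mathbf{Z}_p$, integrality is equivalent to $\alpha A + \epsilon - \delta\beta_0 \equiv 0 \pmod{p^b}$. Using (II) to eliminate $\epsilon$, this becomes $\alpha A \equiv \delta(\gamma+\beta_0) \pmod{p^b}$. Multiplying both sides by the unit $\gamma-\beta_0$ and using $\gamma^2 \equiv D \pmod{p^b}$ to simplify $\delta(\gamma+\beta_0)(\gamma-\beta_0) = \delta(\gamma^2 - \beta_0^2) \equiv -p^{a-b}A\delta \pmod{p^b}$, the congruence reduces to $A\bigl(\alpha(\gamma - \beta_0) + p^{a-b}\delta\bigr) \equiv 0 \pmod{p^b}$, which holds by (I). Finally, Proposition~\ref{prop:uprimecond} yields $B(m) = p^{2c + \min(b,c)} \cdot \mathrm{meas}(U_m) = p^{3c}\cdot p^a = p^{a+3c}$.

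The main obstacle is the coupling between the three square-root congruences ($\beta_0^2 \equiv D$, $\gamma^2 \equiv D$, and $\gamma + \beta_0 \equiv 0$, each modulo distinct powers of $p$) and the two cross conditions coming from the third defining condition of $U_m$. The trick that makes things work is to multiply the desired congruence by the unit $\gamma - \beta_0$, which converts the obstruction into an expression involving $\gamma^2 - \beta_0^2 = (\gamma^2 - D) - (\beta_0^2 - D)$, where both square-root congruences combine cleanly and the result lines up exactly with (I).
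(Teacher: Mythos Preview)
Your argument is correct. Both you and the paper parameterize $U_m$ explicitly by solving $uy \in M_2(\mathbf{Z}_p)$ for $u_{12}$ and $u_{22}$ in terms of $u_{11}$ plus correction terms, then translate the condition on $(\gamma_1,\gamma_2)uy$ into two linear congruences. The difference is that the paper splits into the cases $a-b \ge b$ and $b \ge a-b$, choosing the substitution $u_{12}' = u_{12} + \beta_0 u_{11}$ in the first and $u_{12}' = u_{12} + \gamma u_{11}$ in the second so that $\chi$-triviality becomes immediate from $\beta_0+\gamma \equiv 0 \pmod{p^b}$ (first case) or $\gamma^2 \equiv D \pmod{p^b}$ (second). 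You instead keep a single parameterization throughout and handle $\chi$-triviality by the multiplication-by-$(\gamma-\beta_0)$ trick, which rewrites $\delta(\gamma+\beta_0)(\gamma-\beta_0) = \delta(\gamma^2 - \beta_0^2) \equiv -p^{a-b}A\delta \pmod{p^b}$ and thereby collapses the desired congruence to $A$ times condition (I). This uses \emph{both} square-root congruences simultaneously rather than one per case, and avoids the case split entirely; the paper's route is a bit more elementary at the cost of repetition.
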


\begin{proof}
If we add the congruences $\beta_0^2 \equiv D \pmod{p^{a-b}}$ and $-\beta_0(\gamma_1\cdot\gamma_2^{-1}) \equiv D \pmod{p^{\min(a-b,b)}}$, we obtain $\beta_0(\beta_0-\gamma) \equiv 2D \pmod{p^{\min(b,a-b)}}$.  Since $\beta_0$ and $2D$ are both $p$-adic units (the latter by hypothesis) and $\min(a-b,b)>1$, we deduce that $\beta_0-\gamma$ is also a $p$-adic unit.  We can also subtract the congruences to similarly obtain $\beta_0+\gamma \equiv 0 \pmod{p^{\min(a,b)}}$.

From here on, we will handle the cases $a -b \ge b$ and $b \ge a-b$ separately.

{\bf Case $a - b \ge b$}: We may write any symmetric $2\times 2$ matrix in the form
\[u=\mb{u_{11}}{-\beta_0u_{11}+u_{12}'}{-\beta_0u_{11}+u_{12}'}{\beta_0^2u_{11}-(\beta_0+\gamma)u_{12}' + u_{22}'}.\]
The integrality conditions become
\begin{align*}
uy = \mb{p^a u_{11}}{p^bu_{12}'}{-p^a\beta_0 u_{11}+p^au_{12}'}{-\gamma p^b u_{12}'+p^bu_{22}'} &\in M_2(\mathbf{Z}_p)\\ \textrm{ and }\gamma_2(p^a(\gamma-\beta_0)u_{11}+p^au_{12}',p^bu_{22}') &\in p^b\mathbf{Z}_p^2.
\end{align*}
These conditions imply $u_{12}' \in p^{-b}\mathbf{Z}_p$ and $u_{22}' \in \mathbf{Z}_p$.  The inequality $a-b \ge b$ implies $p^au_{12}' \in p^b\mathbf{Z}_p$.  It follows from the second integrality condition that $(\gamma-\beta_0)p^au_{11} \in p^b\mathbf{Z}_p$ as well.  We showed above that $\gamma-\beta_0 \in \mathbf{Z}_p^\times$, so $p^au_{11} \in p^b\mathbf{Z}_p$.

Conversely, if $u_{11} \in p^{b-a}\mathbf{Z}_p$, $u_{12}' \in p^{-b}\mathbf{Z}_p$, and $u_{22}' \in \mathbf{Z}_p$, the integrality conditions hold.  Moreover, we have
\[\chi(u) = \psi(-Du_{11}+\beta_0^2u_{11}-(\beta_0+\gamma)u_{12}' + u_{22}') = \psi((\beta_0^2-D)u_{11}-(\beta_0+\gamma)u_{12}')=1\]
since $\beta_0^2-D \in p^{a-b}\mathbf{Z}_p$ and $\beta_0+\gamma \in p^b \mathbf{Z}_p$.  It follows that the cumulative measure is $p^{a-b+b}\cdot p^{2c+\min(b,c)} = p^{a+3c}$.

{\bf Case $b \ge a-b$}: We may write any symmetric $2 \times 2$ matrix in the form
\[u = \mb{u_{11}}{-\gamma u_{11} + u_{12}'}{-\gamma u_{11} + u_{12}'}{\gamma^2 u_{11} - (\beta_0+\gamma)u_{12}' + u_{22}'}.\]
The integrality conditions become
\begin{align*}
	uy = \mb{p^au_{11}}{p^b(\beta_0-\gamma)u_{11}+p^bu_{12}'}{-p^a\gamma u_{11}+p^au_{12}'}{p^b\gamma(\gamma-\beta_0)u_{11}-p^b\gamma u_{12}'+p^bu_{22}'} &\in M_2(\mathbf{Z}_p)\\
	\textrm{ and }\gamma_2(p^au'_{12},p^bu_{22}') &\in p^b\mathbf{Z}_p^2.
\end{align*}
For these conditions to hold, we must have $u_{22}' \in \mathbf{Z}_p$ and $u_{12}' \in p^{b-a}\mathbf{Z}_p$.  In particular, since $b \ge a-b$, $p^bu_{12}' \in \mathbf{Z}_p$.  Since $\gamma-\beta_0 \in \mathbf{Z}_p^\times$, we obtain $p^bu_{11} \in \mathbf{Z}_p$ from the condition on $p^b(\beta_0-\gamma)u_{11}+p^bu_{12}'$.  Conversely, if $u_{22}' \in \mathbf{Z}_p$, $u_{12}' \in p^{b-a}\mathbf{Z}_p$, and $p^bu_{11} \in \mathbf{Z}_p$, the integrality conditions are met.

We calculate that under these conditions,
\[\chi(u) = \psi((\gamma^2-D)u_{11} - (\gamma+\beta_0)u_{12}' + u_{22}') = 1\]
since $u_{22}' \in \mathbf{Z}_p$, $p^{a-b}|\gamma+\beta_0$, and $p^b|\gamma^2-D$.  It follows that the cumulative measure is $p^{a-b+b}\cdot p^{2c+\min(b,c)} = p^{a+3c}$.

\end{proof}
This finishes the cases with $b \ge 1$.

\begin{claim} Assume that $b= c =0$ and that $m \in \mathbf{M}'$ is admissible.  If $p$ is inert, then $U_m \subseteq \ker \chi$ implies  $a = 0$ (so $a=c$).  If $p$ is split, $U_m \subseteq \ker \chi$ implies that $\beta^2 \equiv D \pmod{p^a}$.  These are sufficient conditions too, and we have $B(m)=p^a$ in both cases.
\end{claim}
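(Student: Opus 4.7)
The plan is to exploit the very special form of $y = \mm{p^a}{\beta}{0}{1}$ in this case, together with the vacuous nature of the third condition defining $U_m$ (since $p^c = p^{\min(b,c)} = 1$ makes the row constraint trivially implied by $uy \in M_2(\mathbf{Z}_p)$), to parametrize $U_m$ explicitly and read off when $\chi$ restricted to $U_m$ is trivial.

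First I would write $u = \mm{u_{11}}{u_{12}}{u_{12}}{u_{22}}$ and, using the explicit formula for $uy$ in (\ref{eqn:umcondexplicit}), deduce that $u \in U_m$ if and only if $u_{11} \in p^{-a}\mathbf{Z}_p$, $u_{12} \in p^{-a}\mathbf{Z}_p$, $u_{11}\beta + u_{12} \in \mathbf{Z}_p$, and $u_{12}\beta + u_{22} \in \mathbf{Z}_p$. Since $\beta \in \mathbf{Z}_p$, the third of these conditions already implies $u_{12} \in p^{-a}\mathbf{Z}_p$; similarly there is no upper bound on $u_{22}$ beyond the fourth condition. Hence $U_m$ is parametrized freely by $u_{11} \in p^{-a}\mathbf{Z}_p$, $u_{12} \in -u_{11}\beta + \mathbf{Z}_p$, and $u_{22} \in -u_{12}\beta + \mathbf{Z}_p$.

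Next I would plug into $\chi(u) = \psi(-Du_{11} + u_{22})$ and use the two coset conditions to rewrite this modulo $\mathbf{Z}_p$ as
\[
\chi(u) = \psi\bigl(-Du_{11} - u_{12}\beta\bigr) = \psi\bigl(-Du_{11} + u_{11}\beta^2\bigr) = \psi\bigl(u_{11}(\beta^2 - D)\bigr).
\]
Requiring this to vanish for every $u_{11} \in p^{-a}\mathbf{Z}_p$ gives the necessary and sufficient condition $\beta^2 \equiv D \pmod{p^a}$. In the inert case, $D$ is a non-square mod $p$, so this forces $a = 0$; in the split case, Hensel lifting yields solutions for every $a$, giving the stated classification.

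Finally, to compute $B(m)$, I would observe that $u_{11}$ contributes a factor of $p^a$ from the measure of $p^{-a}\mathbf{Z}_p$, while $u_{12}$ and $u_{22}$ each vary over a single coset of $\mathbf{Z}_p$ and hence contribute $1$ apiece. Thus $\mathrm{meas}(U_m) = p^a$, and combining with the factor $p^{2c + \min(b,c)} = 1$ from Proposition \ref{prop:uprimecond} gives $B(m) = p^a$ in both cases. The step most in need of care is the rewriting of $\chi$ modulo $\mathbf{Z}_p$; since $\psi$ is only defined on $\mathbf{Q}_p/\mathbf{Z}_p$, one must justify the substitutions by noting that each coset condition defines the relevant variable only up to an element of $\mathbf{Z}_p$, on which $\psi$ is trivial.
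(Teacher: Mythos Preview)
Your proof is correct and follows essentially the same approach as the paper: parametrize $U_m$ via $u_{11}\in p^{-a}\mathbf{Z}_p$ together with the two coset constraints on $u_{12}$ and $u_{22}$, then reduce $\chi(u)$ modulo $\mathbf{Z}_p$ to $\psi(u_{11}(\beta^2-D))$ to read off both the admissibility criterion and the measure. The only cosmetic difference is that the paper first exhibits the single test element $p^{-a}\mm{1}{-\beta}{-\beta}{\beta^2}\in U_m$ to isolate the necessary condition before doing the full parametrization, whereas you derive necessity and sufficiency simultaneously.
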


\begin{proof}
First observe that if $b=c=0$, the conditions $uy \in \mathbf{M}_2(\mathbf{Z}_p)$ and ${}^tu=u$ are sufficient to verify that $u \in U_m$.  Observe that $p^{-a}\mm{1}{-\beta}{-\beta}{\beta^2} \in U_m$.  For $m$ to be admissible, we need $\chi(u) = \psi(p^{-a}(\beta^2-D))=1$ or $\beta^2 \equiv D \pmod{p^a}$.  If $p$ is inert, this forces $a=0$ as claimed, and the calculation of $B(m)$ is clear.

So assume $p$ is split and $\beta^2 \equiv D \pmod{p^a}$.  For $u = \mm{u_{11}}{u_{12}}{u_{12}}{u_{22}}$, $uy=\mm{p^au_{11}}{\beta u_{11}+u_{12}}{p^au_{12}}{\beta u_{12}+u_{22}}$.  The condition $u \in U_m$ is equivalent to $u_{11} \in p^{-a}\mathbf{Z}_p$, $u_{12} = -\beta u_{11}+ u_{12}'$ for some $u_{12}' \in \mathbf{Z}_p$, and $u_{22} = -\beta u_{12} + u_{22}'$ for some $u_{22}'\in\mathbf{Z}_p$.  For $u$ of this form,
\[\chi(u) = \psi(-Du_{11}+u_{12}) = \psi(u_{11}(\beta^2-D))=1.\]
The choice of $u_{11}$ gives a measure of $p^a$, the elements $u_{12}'$ and $u_{22}'$ are integral, and $b=c=0$, so the cumulative measure is $B(m)=p^a$.
\end{proof}

\begin{claim} Assume that $b = 0$ and $a = c > 0$.  Then $m \in \mathbf{M}'$ is admissible if and only if $p^a | \beta$ and $(p,\gamma_1) = 1$.  The cumulative measure in this case is $B(m)=p^{2a}$.
\end{claim}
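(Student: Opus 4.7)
\emph{Proof plan.} The plan is to analyze $U_m$ explicitly in the case $b=0$, $a=c>0$ and then determine when $U_m \subseteq \ker \chi$. Note first that since $m \in \mathbf{M}'$ we automatically have $p^c \mid \beta$, which with $c = a$ gives $\beta \in p^a \mathbf{Z}_p$; thus the condition ``$p^a \mid \beta$'' in the statement is already built in, and the substantive content is the criterion $(p, \gamma_1) = 1$ together with the computation of $B(m)$.

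With $y = \mm{p^a}{\beta}{0}{1}$ and $u = \mm{u_{11}}{u_{12}}{u_{12}}{u_{22}}$, the formulas in (\ref{eqn:umcondexplicit}) show that the integrality condition $uy \in M_2(\mathbf{Z}_p)$ reduces to $u_{11} \in p^{-a}\mathbf{Z}_p$, $u_{12}' := \beta u_{11} + u_{12} \in \mathbf{Z}_p$, and $u_{22}' := \beta u_{12} + u_{22} \in \mathbf{Z}_p$; the condition $p^a u_{12} \in \mathbf{Z}_p$ is automatic because $\beta \in p^a \mathbf{Z}_p$. The constraint $(\gamma_1, \gamma_2)uy \in p^a \mathbf{Z}_p \oplus \mathbf{Z}_p$ is trivial in the second coordinate, while the first coordinate says $\gamma_1 u_{11} + \gamma_2 u_{12} \in \mathbf{Z}_p$. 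Substituting $u_{12} = u_{12}' - \beta u_{11}$ and using $\beta u_{11} \in \mathbf{Z}_p$, this collapses to $\gamma_1 u_{11} \in \mathbf{Z}_p$.

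For sufficiency, assume $(p, \gamma_1) = 1$. Then $\gamma_1 u_{11} \in \mathbf{Z}_p$ forces $u_{11} \in \mathbf{Z}_p$, and combined with $\beta \in p^a \mathbf{Z}_p$ this gives $u_{12}, u_{22} \in \mathbf{Z}_p$. Hence $\chi(u) = \psi(-Du_{11} + u_{22}) = 1$ for every $u \in U_m$, so $m$ is admissible. In this case $U_m$ has measure $1$, and Proposition \ref{prop:uprimecond} then yields $B(m) = p^{2c + \min(b,c)} = p^{2a}$.

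For necessity, suppose $p \mid \gamma_1$ and let $k = v_p(\gamma_1) \in \mathbf{Z}_{\ge 1} \cup \{\infty\}$. I would test the element $u$ with $u_{11} = p^{-\min(a,k)}$ and $u_{12}' = u_{22}' = 0$. This $u$ lies in $U_m$ by construction (the constraint $\gamma_1 u_{11} \in \mathbf{Z}_p$ is ensured by the choice of $\min(a,k)$), and because $\beta \in p^a \mathbf{Z}_p$ both $u_{12} = -\beta u_{11}$ and $u_{22} = \beta^2 u_{11}$ remain integral. Consequently $\chi(u) = \psi(-Du_{11} + u_{22})$ has argument of $p$-adic valuation $-\min(a, k) < 0$ since $p \nmid D$, so $\chi(u) \neq 1$ and $m$ is not admissible. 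The main obstacle is producing a test element that actually lies in $U_m$ despite the restriction on $u_{11}$; the hypothesis $\beta \in p^a \mathbf{Z}_p$ is precisely what keeps the off-diagonal and bottom-right entries of $u$ integral when $u_{11}$ has negative valuation.
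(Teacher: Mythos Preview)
Your argument is correct and follows essentially the same route as the paper: both analyze $U_m$ directly using $p^a\mid\beta$ (which, as you note, is forced by $m\in\mathbf{M}'$ and $c=a$), reduce the defining conditions to $u_{11}\in p^{-a}\mathbf{Z}_p$, $u_{12},u_{22}\in\mathbf{Z}_p$, and $\gamma_1 u_{11}\in\mathbf{Z}_p$, and then read off admissibility and $B(m)=p^{2a}$.  The only cosmetic difference is in the necessity direction: the paper simply cites Claim~\ref{claim:rowdivis} (since $a\ge 1$ and $p\mid\beta$ force $p\nmid\gamma_1$), whereas you construct an explicit test element $u$ with $u_{11}=p^{-\min(a,v_p(\gamma_1))}$, which amounts to reproving that claim in this special case.
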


\begin{proof}
If $m$ is admissible, then $p^a|\beta$ by definition of $\mathbf{M}'$ and $(pm\gamma_1)=1$ by Claim \ref{claim:rowdivis}.  Now suppose that these conditions hold and that $u = \mm{u_{11}}{u_{12}}{u_{12}}{u_{22}} \in U_m$.  Using $p^a|\beta$, the condition $uy\in M_2(\mathbf{Z}_p)$ simplifies to $u_{11} \in p^{-a}\mathbf{Z}_p$, $u_{12}\in\mathbf{Z}_p$, and $u_{22} \in \mathbf{Z}_p$.  Using $u_{12}\in\mathbf{Z}_p$, the condition on $(\gamma_1, \gamma_2)uy$ simplifies to $\gamma_1p^au_{11} \in p^a\mathbf{Z}_p$.  Since $(p,\gamma_1)=1$, we require $u_{11} \in \mathbf{Z}_p$.  It is clear that $U_m \subseteq \ker\chi$ and that the cumulative measure is $B(m)=p^{2a}$.
\end{proof}

This concludes the proof of Theorems \ref{thm:inertclassify} and \ref{thm:splitclassify}.

\subsection{Integral (C)}

The element $g \in M_R$ is determined by its lower right hand $3 \times 3$ minor and upper left hand entry $w$.  By right $K$-invariance, to calculate integral (C) in (\ref{eqn:iterintegrals}), we may assume that $w=p^r$ for some $r \in \mathbf{Z}_{\ge 0}$.  For $m \in \mathbf{M}'$ and $r \in \mathbf{Z}_{\ge 0}$, write $g_{m,r}$ for the element of $M_R$ determined by $m$ and $r$.  We must evaluate
\begin{equation} \label{eqn:intc} \int_{V(\mathbf Q_p)} \chi(v)\charf_{M_6(\mathbf{Z}_p)}(n_vg_{m,r})B(m) \,dv.\end{equation}
Observe that integrality of $n_vg_{m,r}$ is equivalent to integrality of $vx$ and $vz$.  If $r \ge a$ and $m$ is admissible, then $w{}^ty^{-1}$ is integral.  Hence, as $vx = vzw{}^{t}y^{-1}$, $vz$ integral implies $vx$ integral for admissible $m$.  Thus, the integral (\ref{eqn:intc}) just gives $B(m)$ when $r \geq a$.  In fact, the integral is $0$ when $r < a$.
\begin{prop}
Suppose that $m \in \mathbf{M}$ is admissible.  If $r < a$ then
\[\int_{V(\mathbf Q_p)} \chi(v)\charf_{M_6(\mathbf{Z}_p)}(n_vg_{m,r})B(m) \,dv = 0.\]
\end{prop}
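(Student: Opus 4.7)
The plan is to show that $\int_V \chi(v) \charf(n_v g_{m,r})\,dv = 0$, which suffices since $B(m) \neq 0$ for admissible $m$. I will combine the integrality conditions already noted (equivalent to $vx, vz \in \Z_p^2$, where $x = p^{r+c}\cdot{}^ty^{-1}$) with the nontriviality of the character $\chi(n_v) = \psi(v_1)$ on $N_V$.

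First, if $r + c < a$, the $(1,1)$-entry of $x$ equals $p^{r+c-a}$, which has negative valuation, so $x$ is non-integral, $g_{m,r}$ is non-integer in its middle block, and $\charf(n_v g_{m,r}) = 0$ identically. This handles the admissible case $b = c = 0$ of Theorem \ref{thm:splitclassify} (where $r + c = r < a$). In the remaining admissible cases we have $c \geq 1$ and $r + c \geq a$.

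Writing $v = p^{-c} v'$ with $v' \in \Z_p^2$ (forced by $vz \in \Z_p^2$), the condition $vx \in \Z_p^2$ becomes a coset condition $v'_1 \equiv v'_2 \beta p^{-b} \pmod{p^{a-r}}$ together with $v'_2 \in p^{\max(0, b-r)}\Z_p$; here $\beta p^{-b} \in \Z_p$ by admissibility ($p^c \mid \beta$ and $c \geq b$). Writing $\chi(v) = \psi(p^{-c} v'_1)$ and integrating over this coset via the substitution $v'_1 = v'_2 \beta p^{-b} + p^{a-r} s$ with $s \in \Z_p$ yields a factor $p^{-(a-r)} \int_{\Z_p} \psi(p^{a-r-c} s)\,ds$, which vanishes whenever $a - r - c < 0$. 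Combined with $c \geq a - r$, only the boundary case $c = a - r$ can still contribute.

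The last step is to rule out this boundary case for admissible $m$ with $r < a$. Going through the remaining bullets of Theorems \ref{thm:inertclassify} and \ref{thm:splitclassify}, $\gamma_1$ is forced to be coprime to $p$ in every such configuration. The requirement that the lower right $3 \times 3$ block of $n_v g_{m,r}$ reduce to $m$ under right $\GL_3(\Z_p)$-action (implicit in the setup through the factor $B(m)$) demands $v_1 p^c \equiv -\gamma_1 \pmod{p^a}$, placing $v'_1$ in a coset incompatible with the integrality coset above: since $\gamma_1$ is a $p$-adic unit and $c = a - r$, the shift $-\gamma_1/p^c$ cannot be absorbed into $p^{a-r}\Z_p$. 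Thus the integrand is supported on the empty set and the integral vanishes. The main technical obstacle is the bookkeeping for bullets 3 and 4 of Theorem \ref{thm:splitclassify}, where the coupling of $\gamma_1, \gamma_2, \beta_0$ under admissibility requires tracking both the $v_1$ and $v_2$ constraints simultaneously.
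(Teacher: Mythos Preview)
Your argument has a genuine gap rooted in a mis-reading of the role of $B(m)$. When you write ``show that $\int_V \chi(v)\,\charf(n_v g_{m,r})\,dv = 0$, which suffices since $B(m) \neq 0$'', you treat $B(m)$ as a constant that may be pulled outside the integral. But in the iterated integral the factor $B$ is applied to the $\GL_3(\Z_p)$-class of the lower-right $3\times 3$ block of $n_v g$, and this class depends on $v$ through $(\gamma_1,\gamma_2)$. The integral $\int_V \chi(v)\,\charf\,dv$ that you set out to compute in steps~2--3 is therefore not the right target, and it need not vanish: already in the inert case with $r=0$ and $a=c\ge 1$, the support of $\charf$ is $v_1\in\Z_p$, $v_2\in p^{-a}\Z_p$, and $\int_{\Z_p}\psi(v_1)\,dv_1=1$. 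Knowing $\int_V\chi\,\charf\,dv=0$ does not imply $\int_V\chi\,\charf\,B(m(v))\,dv=0$, so your oscillatory-integral argument in the regime $c>a-r$ does not by itself establish the proposition.

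The paper's proof is more direct and does not pass through any oscillatory cancellation (except in one case, which is in fact already covered by your step~1). It identifies $v$ with $p^{-c}(\gamma_1,\gamma_2)$ --- that is, it looks at the \emph{specific} $v$ for which the lower-right block of $n_v g$ reduces to the given admissible $m$ --- and simply checks that the integrality condition $vx\in\Z_p^2$ fails for that $v$. When $\beta=0$ (the inert case and split bullets~2,~3) this is the one-line observation $p^{r-a}\gamma_1\notin\Z_p$ because $\gamma_1$ is a unit. In the split case $a>c=b\ge 1$ with $\beta\neq 0$, the condition becomes $p^{r-a}(\gamma_1-\beta_0\gamma_2)\in\Z_p$, and one must show $\gamma_1-\beta_0\gamma_2$ is a unit; this follows from the admissibility constraint $\beta_0+\gamma\equiv 0\pmod p$ (so that $\gamma-\beta_0$ is a unit), established earlier in the classification. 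Your step~4 contains the germ of this, but even there the case $\beta\neq 0$ is not handled: the integrality coset is $v_1'\equiv v_2'\beta_0\pmod{p^{a-r}}$, a coset of units, so ``$\gamma_1$ is a unit'' alone does not yield a contradiction --- one needs the further relation between $\gamma$ and $\beta_0$, as you acknowledge but do not carry out.
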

\begin{proof}
Using the notation in (\ref{eqn:mform}), we calculate
\begin{align*} vx &= vzw{}^ty^{-1} = (p^{-c}\gamma_1,p^{-c}\gamma_2)\mb{p^{r+c-a}}{}{-\beta p^{r+c-a-b}}{p^{r+c-b}}\\ &=(p^{r-a}\gamma_1-p^{r-a-b}\beta\gamma_2,p^{r-b}\gamma_2).\end{align*}
In the inert case, we have $\beta=0$, so we require $p^{r-a}\gamma_1 \in \mathbf{Z}_p$.  However, if $a=0$ then $r < a$ is impossible, while if $a \ge 1$, $p\nmid \gamma_1$, so $p^{r-a}\gamma_1$ can never be integral if $r < a$.

We now handle the split cases; note that we combine the second and third cases of Theorem \ref{thm:splitclassify} below.  We write $v = (v_1,v_2)$ below.
\begin{itemize}
	\item ($b=0,c=0$.) We have $\gamma_2=0$, so the integrality condition simplifies to $p^{r-a}\gamma_1 \in \mathbf{Z}_p$.  Since $c=0$, we have $v=(\gamma_1,\gamma_2)$, so $\chi(v)\charf_{M_6(\mathbf{Z}_p)}(n_vg_{m,r})B(m) = \psi(v_1)B(m)$ on the domain where $v_1\in p^{a-r}\mathbf{Z}_p$ and $v_2 \in \mathbf{Z}_p$.  The desired vanishing follows.
	\item ($a=c \ge b$.) The argument is identical to the one in the inert case. 
	\item ($a > c = b \ge 1$.) Note that the conditions $p\nmid\gamma_2$ and $p^{r-b}\gamma_2 \in \mathbf{Z}_p$ force $b \le r < a$.  Write $\beta'=p^{-c}\beta$.  We require $p\nmid \gamma_1$ and $b=c$, so if $b\le r < a$, the integrality condition reduces to $p^{r-a}(\gamma_1 - \beta_0\gamma_2) \in \mathbf{Z}_p$.  Since $\gamma_2$ is a unit, this is equivalent to $\gamma-\beta_0 \in p^{a-r}\mathbf{Z}_p$.  Finally, using the condition $p\nmid 2D$, we showed in the proof of Claim \ref{claim:ageceb} that $\gamma-\beta_0$ is a unit, a contradiction if $r < a$.
\end{itemize}
\end{proof}

\subsection{Integral (D)}

As described earlier, integral (C) in (\ref{eqn:iterintegrals}) vanishes unless $r \geq a$.  Integral (D) is therefore the sum over $m \in \mathbf{M}$ and $r \ge a$ of $B(g_{m,r})\delta_R^{-1}(g_{m,r})|\nu(g_{m,r})|^s\lambda(g_{m,r})$.  Using Theorems \ref{thm:inertclassify} and \ref{thm:splitclassify} to enumerate the $m \in \mathbf{M}$, we deduce Theorems \ref{thm:deltainert} and \ref{thm:deltasplit}.  In the split case, one uses Lemma \ref{lem:hlem} to rewrite the result in the desired form.
\section{Calculation with $N$}\label{N}
Here we give the calculation of $N(s) * \zeta_p(2s-6)\sum{|t|^{s-6}|l|^2\lambda(\iota(t,l))}.$
\subsection{Hecke operators}
In the definition of $N$ above, $T_{0,3}$ is the Hecke operator that is the characteristic function of 
\[\GSp_6(\mathbf Z_p)\diag(1,1,1,p,p,p)\GSp_6(\mathbf Z_p),\]
$T_{3,3}$ is the Hecke operator that is the characteristic function of 
\[\GSp_6(\mathbf Z_p)\diag(p,p,p,p,p,p)\GSp_6(\mathbf Z_p),\]
and $T_{2,3}$ is the Hecke operator that is the characteristic function of 
\[\GSp_6(\mathbf Z_p)\diag(1,p,p,p,p,p^2)\GSp_6(\mathbf Z_p).\]

We first reduce these Hecke operators to $\GL_3$ Hecke operators.  To this end, suppose $m$ is an element of $\left(\GL_3(\mathbf Q_p) \cap M_3(\mathbf Z_p)\right)$, such that $p m^{-1} \in M_3(\mathbf Z_p)$. Consider the double coset
\[\GL_3(\mathbf Z_p)m\GL_3(\mathbf Z_p),\]
and suppose it has a coset decomposition
\[ \coprod_\beta{ v_\beta \GL_3(\mathbf Z_p)}.\] 
We use the shorthand $\lambda(g[m]_p)$ to mean
\[\sum_\beta {\lambda(g\tilde{v}_\beta)},\]
where $\tilde{v}_\beta$ is the element of the Levi of the Siegel parabolic with similitude $p$ whose top left $3 \times 3$ block is $v_\beta$. Write $M_P$ for the Levi of the Siegel parabolic.
\begin{proposition}\label{GL3T03} Suppose $g \in M_P$, $g = \iota(t,\ell)$, with $|t| \leq |\ell| \leq 1$.  Then 
\begin{align*}T_{0,3}\lambda(g) &= \lambda\left(g\left[\begin{array}{ccc} 1 & & \\ & 1 & \\ & & 1 \end{array}\right]_p\right) + p\lambda\left(g\left[\begin{array}{ccc} p & & \\ & 1 & \\ & & 1 \end{array}\right]_p\right) + p^3\lambda\left(g\left[\begin{array}{ccc} p & & \\ & p & \\ & & 1 \end{array}\right]_p\right) \\ &+ p^6\lambda\left(g\left[\begin{array}{ccc} p & & \\ & p & \\ & & p \end{array}\right]_p\right).\end{align*}
\end{proposition}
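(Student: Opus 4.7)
The plan is to compute the Hecke operator $T_{0,3}$ by unwinding the double coset decomposition via the Iwasawa decomposition with respect to the Siegel parabolic $P = M_P U_P$ of $\GSp_6$.  Every element of the double coset $\GSp_6(\mathbf Z_p) \cdot \diag(1,1,1,p,p,p) \cdot \GSp_6(\mathbf Z_p)$ lies in $P(\mathbf Q_p) \cdot \GSp_6(\mathbf Z_p)$, and integrality together with the similitude-$p$ condition forces a representative of the form $\tilde v \cdot n$ with $\tilde v \in M_P$ of similitude $p$ and $n \in U_P$.  The top-left $3\times 3$ block $v$ of $\tilde v$ satisfies $v, pv^{-1} \in M_3(\mathbf Z_p)$, so its elementary divisors lie in $\set{1, p}$; there are therefore exactly four $\GL_3(\mathbf Z_p)$-double cosets of such $v$, represented by $\diag(p^{a_1}, p^{a_2}, p^{a_3})$ with $a_i \in \set{0,1}$, which account for the four terms in the claimed formula.

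For each fixed Levi representative $\tilde v$, I would count the unipotent parts $n$ giving inequivalent right $\GSp_6(\mathbf Z_p)$-cosets by writing $n$ in terms of a symmetric parameter $S$ (with the symmetry condition dictated by the paper's form $J$).  Integrality of $\tilde v \cdot n$ reduces to $vS \in M_3(\mathbf Z_p)$; counting the entries of $S$ modulo $M_3(\mathbf Z_p)$ subject to the symplectic constraint yields the weights $1, p, p^3, p^6$ for the four types $v = \diag(1,1,1), \diag(p,1,1), \diag(p,p,1), \diag(p,p,p)$ respectively, exactly matching the coefficients in the statement.

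To pass from coset reps back to $\lambda$, I would exploit the $(U_R, \chi)$-equivariance of $\lambda$.  Since $g = \iota(t,\ell) \in M_R \subseteq R$ normalizes $U_R$, and $M_P$ normalizes $U_P \subseteq U_R$, the conjugate $g\tilde v n \tilde v^{-1} g^{-1}$ lies in $U_R$, so
\[\lambda(g \tilde v n) = \chi\!\left(g \tilde v n \tilde v^{-1} g^{-1}\right)\lambda(g \tilde v).\]
It then suffices to check that, for each Levi rep $\tilde v$ and for $n$ in the set of coset reps counted above, this character is identically trivial.  Summing over $n$ then contributes the counted power of $p$, and summing over the $\GL_3$-coset reps $v_\beta$ within each type gives $\lambda(g[m]_p)$ by definition.

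The main obstacle is this triviality verification.  Choosing $U_P$-reps $n$ concretely in terms of the free entries of $S$ identified in the counting step, one must evaluate $g \tilde v n \tilde v^{-1} g^{-1}$ in $U_R$-coordinates and check that its components feeding into the formula $\chi(n) = \psi(v_1 - Du_{11} + u_{22})$ land in $\mathbf Z_p$.  The support condition $|t| \le |\ell| \le 1$ from Lemma \ref{lem:lambdanonvan} is expected to play a crucial role: combined with the restriction on elementary divisors of $v$, it should force the denominators that arise from conjugating by $g$ and by $\tilde v$ to cancel, making the character trivial on the relevant finite set.  This is in the spirit of the support analysis carried out at the end of Section \ref{spin:global}, but with a more intricate accounting of powers of $p$ across the four cases.
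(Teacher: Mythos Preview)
Your plan is correct and matches the paper's approach essentially step for step: decompose the $T_{0,3}$ double coset through the Siegel parabolic, sort the Levi parts into the four $\GL_3(\mathbf Z_p)$-double cosets with elementary divisors in $\{1,p\}$, count the unipotent reps to get the weights $1,p,p^3,p^6$, and then use the $(U_R,\chi)$-equivariance to collapse each unipotent sum to a scalar times $\lambda(g\tilde v)$.

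For what you call ``the main obstacle,'' the paper resolves it more cleanly than the cancellation-of-denominators picture you sketch, by separating the two conjugations rather than treating them jointly. First, because the similitude is $p$ (and only because of this), one can choose the unipotent representatives $u \in U(x)/U_P(\mathbf Z_p)$ so that $\tilde v\, u\, \tilde v^{-1}$ is already in $M_6(\mathbf Z_p)$, hence in $U_R(\mathbf Z_p)$; this is a direct check on each of the four types. Second, and independently, the hypothesis $|t| \le |\ell| \le 1$ on $g = \iota(t,\ell)$ implies that $\chi(g\, n\, g^{-1}) = 1$ for every $n \in U_R(\mathbf Z_p)$, since conjugation by such $g$ contracts the $v$- and $u$-coordinates entering $\chi$. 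Chaining these two facts gives $\chi(g\tilde v\, u\, \tilde v^{-1} g^{-1}) = 1$ for all reps at once, with no case-by-case denominator tracking needed.
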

\begin{proof} The decomposition into cosets of Hecke operators for the symplectic group is standard; see, for example, Andrianov \cite[Lemma 3.49]{a2}.  However, the proof of the proposition is facilitated by a good choice of coset representatives, and its truth depends on the hypothesis that $g = \iota(t,\ell)$ with $|t| \leq |\ell| \leq 1$, so we briefly explain.

For an element $m \in M_P$ define
\[U(m) = \{u \in U_{P}(\mathbf Q_p) : mu \in M_6(\mathbf Z_p)\}.\]
Also, for ease of notation, denote by $X_0$ the set of representatives appearing in the sum for the operator $\left[1_3\right]_p$.  Similarly, denote by $X_1, X_2, X_3$ the sets of representatives appearing in the operators 
\[\left[\begin{array}{ccc} p & & \\ & 1 & \\ & & 1 \end{array}\right]_p,\; \left[\begin{array}{ccc} p & & \\ & p & \\ & & 1 \end{array}\right]_p,\; \text{ and }\left[\begin{array}{ccc} p & & \\ & p & \\ & & p \end{array}\right]_p,\]
respectively.

It follows from the result \cite[Lemma 3.49]{a2} mentioned above that for general $g$ one has
\[T_{0,3}\lambda(g) = \sum_{0 \leq i \leq 3}\sum_{x \in X_i}\sum_{\substack{u \in U(x) \\ \mod {U_P(\mathbf Z_p)}}}{{{\lambda(gxu)}}}.\]
We have
\[\lambda(gxu) = \chi(gxux^{-1}g^{-1})\lambda(gx).\]
One may check that for the $x$ appearing above, representatives $u \in U(x)/U_P(\mathbf Z_p)$ may be chosen so that $xux^{-1} \in M_6(\mathbf Z_p)$.  (This simplification is limited to the similitude $p$ case.)  Now, if $g = \iota(t,\ell)$, with $|t| \leq |\ell| \leq 1$, and $n \in U_R(\mathbf Z_p)$, then $\chi(gng^{-1}) = 1$.  Hence, to compute $T_{0,3}\lambda(g)$, one must just count the sizes of the sets $U(x)/U_P(\mathbf Z_p)$.  These sizes are $1, p, p^3,$ or $p^6$ if $x$ is in $X_0, X_1, X_2,$ or $X_3$, respectively.  The proposition follows.
\end{proof}

Set $\epsilon_L(p)$ to be $1$ if $p$ is split in $L$, and $-1$ if $p$ is inert.  Then we have the following decomposition of $T_{2,3}$.
\begin{proposition}\label{GL3T23} Suppose $g = \iota(t,\ell)$, with $|t| \leq |\ell| \leq 1$.  Then
\begin{align*} T_{2,3} \lambda(g) =&\lambda\left(g\left(\begin{array}{c|c} 1_3 & \\ \hline & p 1_3 \end{array}\right)\left[\begin{array}{ccc} 1 & & \\ & p & \\ & & p \end{array}\right]_p\right) + p^4\lambda\left(g\left(\begin{array}{c|c} p 1_3 & \\ \hline & 1_3 \end{array}\right)\left[\begin{array}{ccc} p & & \\ & 1 & \\ & & 1 \end{array}\right]_p\right)\\
&+ (p^3-1 + \delta_{|t|,|\ell|}(-p^3 + \epsilon_L(p)p^2))\lambda(\iota(t,\ell)).\end{align*}
Here $\delta_{|t|,|\ell|}$ is $1$ when $|t| = |\ell|$ and is $0$ otherwise.
\end{proposition}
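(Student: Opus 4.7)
The plan is to follow the template of the proof of Proposition \ref{GL3T03}, taking into account that $T_{2,3}$ has similitude $p^2$ and that one family of coset representatives will produce a nontrivial exponential sum that cannot be avoided by a good choice of representatives. Using the standard decomposition of the $T_{2,3}$ double coset (\cite[Lemma 3.49]{a2}) together with Iwasawa to place representatives inside the Siegel parabolic $P$, one writes
\[T_{2,3}\lambda(g)=\sum_{i}\sum_{x\in Y_i}\sum_{u\in U(x)/U_P(\mathbf Z_p)}\lambda(gxu),\]
where each $Y_i$ groups the Levi representatives of a fixed $\GL_3(\mathbf Z_p)$-double-coset type, and $U(x)=\{u\in U_P(\mathbf Q_p):xu\in M_6(\mathbf Z_p)\}$. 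The representatives split naturally into three families corresponding to the three terms of the proposition.

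For the first two families, the Levi part factors as $\mm{1_3}{}{}{p1_3}\tilde v_\beta$, respectively $\mm{p1_3}{}{}{1_3}\tilde v_\beta$, where $\tilde v_\beta$ ranges over similitude-$p$ Siegel Levi elements whose top-left block has Smith normal form $(1,p,p)$, respectively $(p,1,1)$. For these families, the key calculation from the proof of Proposition \ref{GL3T03} goes through verbatim: representatives $u\in U(x)/U_P(\mathbf Z_p)$ can be chosen so that $xux^{-1}\in M_6(\mathbf Z_p)$, so that $\chi(gxux^{-1}g^{-1})=1$ under the standing hypothesis $|t|\le|\ell|\le 1$. Enumerating the sizes of $U(x)/U_P(\mathbf Z_p)$ in each family (which come out to $1$ and $p^4$ respectively) then reproduces the first two terms of the proposition.

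The real content is in the third family, which consists of cosets for which $gx$ is right-$\GSp_6(\mathbf Z_p)$-equivalent to $p\cdot g\cdot u'$ for some $u'\in U_R(\mathbf Z_p)$. Using $\pi(p\cdot 1_6)v_0=\omega(p)v_0$ together with the identity $\lambda(gxu)=\chi(gxux^{-1}g^{-1})\lambda(gx)$, the total contribution of this family reduces to
\[\omega(p)\lambda(\iota(t,\ell))\sum_{u'}\chi(\iota(t,\ell)u'\iota(t,\ell)^{-1}),\]
where $u'$ ranges over a finite subset of $U_P(\mathbf Z_p)/p U_P(\mathbf Z_p)$ cut out by the integrality condition on $gxu$. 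Using the formulas (\ref{ML}) to compute the conjugation $\iota(t,\ell)u'\iota(t,\ell)^{-1}$ and recalling that $\chi|_{U_R}$ involves $v_1-Du_{11}+u_{22}$, this character pulls back to a quadratic-exponential in the entries of $u'$ whose coefficients involve $t/N(\ell)$, $\ell$, and $D$. In the generic case $|t|<|\ell|$, additive-character orthogonality collapses the sum to $p^3-1$. In the boundary case $|t|=|\ell|$ an additional fiber of $u'$'s becomes integral, and the residual sum degenerates to a one-dimensional quadratic Gauss sum over the residue field governed by the norm form of $L_p/\mathbf Q_p$; this evaluates to $\epsilon_L(p)p^2$ and yields the correction $-p^3+\epsilon_L(p)p^2$.

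The main obstacle is this last step. It requires precisely identifying which $u'$ are integral in the boundary regime $|t|=|\ell|$ and then carrying out the explicit evaluation of the residual quadratic Gauss sum. The calculation ultimately relies on the classical fact that the quadratic character of $L_p/\mathbf Q_p$ at an unramified prime $p\nmid 2D$ equals $\epsilon_L(p)$, which is the unique place where the splitting behavior of $p$ in $L$ enters the statement. Everything else is combinatorial bookkeeping of coset counts, modulus characters, and the explicit action of $\iota(t,\ell)$ on $U_P$.
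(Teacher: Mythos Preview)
Your outline matches the paper's approach: decompose $T_{2,3}$ via \cite[Lemma 3.49]{a2} into three families, handle the first two exactly as in Proposition~\ref{GL3T03}, and reduce the third to a character sum that detects the splitting of $p$ in $L$.  The coset counts $1$ and $p^4$ for the first two families are correct.

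However, your treatment of the third family contains a genuine misconception.  The index set for this family is what the paper calls $U(p)'$: the elements $u\in U(p\cdot 1_6)/U_P(\mathbf Z_p)$ for which $(p\cdot 1_6)u$ has rank one over $\mathbf F_p$.  This set is \emph{fixed}, of cardinality $p^3-1$, and does not depend on $g=\iota(t,\ell)$ at all.  What changes with $g$ is the value of the character $\chi(gug^{-1})$.  When $|t|<|\ell|$, one has $\chi(gug^{-1})=1$ for every $u\in U(p)'$ because conjugation by $\iota(t,\ell)$ scales the relevant entries by $t/N(\ell)\in p\mathbf Z_p$; so the sum equals $p^3-1$ simply because each of the $p^3-1$ terms is $1$, not by any orthogonality argument.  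When $|t|=|\ell|$, conjugation by $\iota(t,\ell)$ acts trivially on the $N_U$-part, so $\chi(gug^{-1})=\chi(u)$, and one must evaluate $\sum_{u\in U(p)'}\chi(u)$ directly.

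Your statements that ``additive-character orthogonality collapses the sum to $p^3-1$'' and that ``an additional fiber of $u'$'s becomes integral'' in the boundary case are therefore both incorrect descriptions of the mechanism.  The correct computation in the boundary case parametrizes rank-one symmetric $3\times 3$ matrices over $\mathbf F_p$ and evaluates $\sum\chi(u)$; this is a point count on the conic $-Dx^2+y^2=0$ rather than a Gauss sum, and it yields $-p^3+\epsilon_L(p)p^2$ according to whether $D$ is a square modulo $p$.  Once you fix these two points, the rest of your argument goes through and coincides with the paper's.
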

\begin{proof} By Andrianov \cite[Lemma 3.49]{a2} and arguments similar to those in the proof of Proposition \ref{GL3T03}, if we set $U(p)'$ to be the elements $u$ of $U(p1_6)$ such that $(p1_6)u$ has rank one over $\mathbf F_p$, then
\begin{align*}
	T_{2,3}\lambda(g) =& \lambda\left(g\left(\begin{array}{c|c} 1_3 & \\ \hline & p 1_3 \end{array}\right)\left[\begin{array}{ccc} 1 & & \\ & p & \\ & & p \end{array}\right]_p\right) + p^4\lambda\left(g\left(\begin{array}{c|c} p 1_3 & \\ \hline & 1_3 \end{array}\right)\left[\begin{array}{ccc} p & & \\ & 1 & \\ & & 1 \end{array}\right]_p\right)\\&+\sum_{u \in U(p)'}{\lambda(gu)}.\end{align*}
The calculation of the term with coefficient $p^4$ uses that $g = \iota(t,\ell)$ with $|t| \leq |\ell| \leq 1$ together with integrality properties of a nice choice of representatives.  We have $\lambda(gu) = \chi(gug^{-1})\lambda(g)$ and
\[\sum_{u \in U(p)'}{\chi(gug^{-1})} = p^3 -1 + \sum_{u \in U(p)'}{\left(\chi(gug^{-1})-1\right)}.\]
If $|t| < |\ell|$, then $\chi(gug^{-1}) = 1$, and the sum vanishes.  If $|t| = |\ell|$, then $\chi(gug^{-1}) = \chi(u)$, and one evaluates the sum to be $-p^3 + \epsilon_L(p)p^2$.
\end{proof}

We will reduce the $\GL_3$ Hecke operators above to the following $\GL_2$ Hecke operators.
\begin{definition} Define
\[T(u) = \left(\begin{array}{c|c|c|c}1 & & & \\ \hline & u& & \\ \hline & & p{}^{t}u^{-1} & \\ \hline & & &p \end{array}\right).\]
For $g \in G$, we denote $(T_p \lambda)(g)$ to be the sum $\sum_u{ \lambda\left(gT(u)\right)}$ where $u$ in the sum varies over the set of size $p+1$ containing $\left(\begin{array}{cc} 1 & \\ & p\end{array}\right)$ and $\left(\begin{array}{cc} p & a\\ & 1\end{array}\right)$ for distinct representatives $a$ of the integers modulo $p$.  Similarly, we define
\[T'(u) = \left(\begin{array}{c|c|c|c}p & & & \\ \hline & u& & \\ \hline & & p{}^{t}u^{-1} & \\ \hline & & &1 \end{array}\right)\]
and denote $(T'_p\lambda)(g)$ to be the sum $\sum_u{ \lambda\left(gT'(u)\right)}$, where $u$ varies over the same set.
\end{definition}

We will relate $T_p$ and $T'_p$ shortly, and then reduce all the Hecke operators down to $T_p$.  To do this, we first need a lemma.
\begin{lemma}\label{identities} We have the equalities
\begin{itemize}
\item $\iota(t,\ell)\diag(1,1,1,p,p,p) = \iota(t/p,\ell) (p 1_6),$ 
\item $\iota(t,\ell)\diag(p,p,p,1,1,1) = \iota(pt,\ell),$
\item $\iota(t,\ell)\diag(p,1,1,p,p,1) = \iota(pt,p\ell),$ and
\item $\iota(t,\ell)\diag(1,p,p,1,1,p) = \iota(t/p,\ell/p)(p1_6).$ \end{itemize} \end{lemma}
Using these identities we can prove the following.
\begin{lemma}\label{T'T} $(T'_p\lambda)((t,\ell)) = (T_p\lambda)(\iota(p^2t,p\ell))$.
\end{lemma}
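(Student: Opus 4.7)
The plan is to relate $T'(u)$ and $T(u)$ by finding a scalar $c\in\Q_p^\times$ and an element $\iota(t',\ell')$ of the torus such that
\[ \iota(t,\ell)\, T'(u) \;=\; c\cdot \iota(t',\ell')\, T(u) \]
as elements of $\GSp_6(\Q_p)$, for every $u$ in the common indexing set. Since the sums defining $T_p\lambda$ and $T'_p\lambda$ run over the same set of $u$'s, such an identity lets us convert one sum directly to the other.

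To find $c, t', \ell'$, I would write both sides as block-diagonal matrices with blocks of size $1+2+2+1$. On the left one gets $\mathrm{diag}(pt,\ t\,{}^t m_\ell^{-1}\,u,\ p\,m_\ell\,{}^t u^{-1},\ 1)$, and on the right $c\cdot \mathrm{diag}(t',\ t'\,{}^t m_{\ell'}^{-1}\,u,\ p\,m_{\ell'}\,{}^t u^{-1},\ p)$. The first and last blocks immediately force $c=p^{-1}$ and $t'=p^2 t$. The inner blocks then force $m_{\ell'}=p\,m_\ell$, and using that $\ell\mapsto m_\ell$ is $\Q$-linear (as is clear from the explicit formula $m_\ell=\mm{x}{Dy}{y}{x}$ for $\ell=x+y\sqrt{D}$), this gives $\ell'=p\ell$. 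A quick check confirms that with these choices the remaining middle-block equation is automatically satisfied.

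Having established $\iota(t,\ell)\,T'(u) = p^{-1}\cdot\iota(p^2 t,\, p\ell)\,T(u)$, I would apply $\lambda$ to both sides. The factor $p^{-1}\,1_6$ lies in the center of $\GSp_6$, so it acts on $\lambda$ through the central character as multiplication by $\omega(p)^{-1}$. Summing over $u$ in the common indexing set yields
\[ (T'_p\lambda)(\iota(t,\ell)) \;=\; \sum_u \lambda(\iota(t,\ell)\,T'(u)) \;=\; \omega(p)^{-1}\sum_u \lambda(\iota(p^2 t,p\ell)\,T(u)) \;=\; \omega(p)^{-1}(T_p\lambda)(\iota(p^2 t, p\ell)), \]
which is the claim.

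I do not anticipate a real obstacle here: the whole argument is a direct matrix identity followed by invocation of the central character, and the only mild subtlety is confirming the linearity of $\ell\mapsto m_\ell$ so that $m_{p\ell}=p\,m_\ell$. This is transparent from the formula in Section~\ref{subsec:groupdefs}.
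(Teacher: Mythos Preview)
Your proof is correct and is essentially the same as the paper's: both establish the matrix identity $\iota(t,\ell)\,T'(u) = (p^{-1}1_6)\,\iota(p^2t,p\ell)\,T(u)$ and then invoke the central character. The only cosmetic difference is that the paper verifies this identity by factoring $T'(u) = (p^{-1}1_6)\,\diag(p1_3,1_3)\,\diag(p,1,1,p,p,1)\,T(u)$ and applying the torus identities of Lemma~\ref{identities}, whereas you read it off directly from a block-by-block comparison.
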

\begin{proof}
This follows from Lemma \ref{identities} and the equality
\[T'(u) = (p^{-1}1_6)\left(\begin{array}{c|c} p 1_3 & \\ \hline & 1_3 \end{array}\right)\left(\begin{array}{ccc|ccc} p&  &  &  &  &  \\  &  1&  &  &  &  \\ &  &  1&  &  &  \\ \hline &  &  & p &  &  \\ &  &  &  & p &  \\  &  &  &  &  & 1 \end{array} \right)T(u).\]
\end{proof}
Now we have the following simplification of the action of $T_{0,3}$.
\begin{proposition}\label{GL2T03}
We have
\begin{align*}
	T_{0,3} \lambda(\iota(t,\ell)) &=\lambda(\iota(t/p,\ell)) + p^3\lambda(\iota(pt,p\ell)) + p(T_p\lambda)(\iota(t,\ell))\\ & + p^4(T_p\lambda)(\iota(p^2t,p\ell))+ p^3\lambda(\iota(t/p,\ell/p)) + p^6\lambda(\iota(pt,\ell)).
\end{align*}
\end{proposition}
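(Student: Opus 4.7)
The plan is to start from Proposition \ref{GL3T03}, which writes $T_{0,3}\lambda(\iota(t,\ell))$ as a sum of four terms, each involving a $\GL_3$ Hecke operator $[v]_p$ acting through lifts $\tilde v_\beta$ in the Siegel Levi of $\GSp_6$ with similitude $p$.  I would evaluate each of these four terms separately and verify that they sum to the claimed right-hand side.

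The two single-coset terms are the easiest.  The lifts of $1_3$ and $p\,1_3$ to the Siegel Levi are $\diag(1_3, p\,1_3)$ and $\diag(p\,1_3, 1_3)$, respectively.  Applying the first two identities of Lemma \ref{identities} (and using the central character $\omega(p)$ for the extra $p\,1_6$ factor in the former case) immediately yields the first and last terms $\omega(p)\lambda(\iota(t/p,\ell))$ and $p^6\lambda(\iota(pt,\ell))$ of the claimed formula.

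For $p\,\lambda(\iota(t,\ell)[\diag(p,1,1)]_p)$ and $p^3\,\lambda(\iota(t,\ell)[\diag(p,p,1)]_p)$, the approach is to choose upper-triangular Hermite-normal-form representatives for the $p^2+p+1$ right $\GL_3(\mathbf{Z}_p)$-cosets in each double coset, indexed by the position(s) of the elementary divisor(s) $p$.  For each representative $v_\beta$ I would compute $\iota(t,\ell)\tilde v_\beta$ and then use right $\GSp_6(\mathbf{Z}_p)$-invariance of $\lambda$ together with the support restrictions of Lemma \ref{lem:lambdanonvan} to simplify.  Certain groups of cosets combine, after $\GSp_6(\mathbf{Z}_p)$-normalization, into the representatives $T(u)$ and $T'(u)$ that define $T_p$ and $T'_p$; the remaining cosets collapse, via Lemma \ref{identities}, to diagonal shifts of the form $\iota(pt,p\ell)$ or $\iota(t/p,\ell/p)$.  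Lemma \ref{T'T} is then invoked to convert the $T'_p$-contribution at $\iota(p^2 t, p\ell)$ into the $T_p$-form that appears in the claimed formula.

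The main difficulty is the combinatorial bookkeeping: one must identify which coset representatives give non-vanishing contributions (Lemma \ref{lem:lambdanonvan} is the key tool here), match each surviving representative to either a $T_p$-coset, a $T'_p$-coset, or a diagonal shift, and verify that all numerical coefficients ($p$, $p^3$, $p^4\omega(p)^{-1}$, $p^3\omega(p)$) emerge correctly from this matching.  The support hypothesis $|t|\le|\ell|\le 1$ ensures that the character $\chi$ is trivial on the relevant unipotent conjugations (as already exploited in the proof of Proposition \ref{GL3T03}), so no extra $\chi$-factors complicate the identifications.
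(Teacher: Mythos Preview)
Your approach is essentially the same as the paper's: start from Proposition \ref{GL3T03}, dispose of the two single-coset terms via Lemma \ref{identities}, decompose each of the two $\GL_3$ double cosets $[\diag(p,1,1)]_p$ and $[\diag(p,p,1)]_p$ into its $p^2+p+1$ upper-triangular representatives, and regroup the resulting terms into a diagonal shift plus a $T_p$ (for the first) or a diagonal shift plus a $T'_p$ (for the second), the latter then converted via Lemma \ref{T'T}.  The paper spells this out for $[\diag(p,1,1)]_p$: the $p^2$ cosets with $p$ in the top-left slot all contribute $\lambda(\iota(pt,p\ell))$ after the unipotent factor is absorbed using $\chi$-triviality, and the remaining $p+1$ cosets assemble exactly into $(T_p\lambda)(\iota(t,\ell))$.

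One point to correct: you describe Lemma \ref{lem:lambdanonvan} as ``the key tool'' for identifying which cosets give non-vanishing contributions.  In fact no cosets vanish in this computation; all $p^2+p+1$ representatives in each double coset contribute, and the work lies entirely in the \emph{regrouping}.  The relevant simplification is not a support argument but the observation (already used in the proof of Proposition \ref{GL3T03}) that for $g=\iota(t,\ell)$ with $|t|\le|\ell|\le1$ one has $\chi(gng^{-1})=1$ for $n\in U_R(\mathbf Z_p)$, so the upper-triangular unipotent parts of the representatives can be discarded.  Drop the reference to Lemma \ref{lem:lambdanonvan} and your outline matches the paper exactly.
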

\begin{proof} We apply Lemmas \ref{identities} and \ref{T'T} to Proposition \ref{GL3T03} and use the well-known coset decompositions for Hecke operators on $\GL_3$.  For example,
\begin{align*}
	&\GL_3(\mathbf Z_p)\left(\begin{array}{ccc}p & & \\ &1& \\ & &1\end{array}\right)\GL_3(\mathbf Z_p) =\\ &\coprod_{a,b}{\left(\begin{array}{ccc}p & a& b\\ &1& \\ & &1\end{array}\right)\GL_3(\mathbf Z_p)} \coprod{} \coprod_{a}{\left(\begin{array}{ccc}1 & & \\ &p& a\\ & &1\end{array}\right)\GL_3(\mathbf Z_p)}
\coprod{\left(\begin{array}{ccc}1 & & \\ &1& \\ & &p\end{array}\right)\GL_3(\mathbf Z_p)}.
\end{align*}
Here $a$ and $b$ range over distinct representatives of the integers modulo $p$.

We have $\chi(gng^{-1}) = 1$ for $n \in U_R(\mathbf Z_p)$ and $g = \iota(t, \ell)$ with $|t| \leq |\ell| \leq 1$ as in the proof of Proposition \ref{GL3T03}.  Using this with Lemma \ref{identities}, the first term above leads to $p^2\lambda(\iota(pt,p\ell))$.  The second and third terms combine to give $T_p\lambda(\iota(t,\ell))$.

We handle the other terms in Proposition \ref{GL3T03} similarly.  The first and fourth terms are immediate; the third term (with the $p^3$ coefficient) gives a $T_p'$, which one converts to a $T_p$ via Lemma \ref{T'T}.
\end{proof}

\begin{proposition}\label{GL2T23}
We have
\begin{align*}
	T_{2,3} \lambda((t,\ell)) &=p(T_p\lambda)(\iota(pt,p\ell)) + \lambda(\iota(t/p^2,\ell/p)) + p^6\lambda(\iota(p^2t,p\ell)) \\ &+p^4(T_p\lambda)(\iota(pt,\ell)) + (p^3-1 + \delta_{|t|,|\ell|}(-p^3 + \epsilon_L(p)p^2))\lambda(\iota(t,\ell)).
\end{align*}
\end{proposition}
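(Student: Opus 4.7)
The proof will parallel that of Proposition \ref{GL2T03}. Beginning with the formula of Proposition \ref{GL3T23}, the first step is to use Lemma \ref{identities} to rewrite the two prefactor matrices as $g\cdot\diag(1_3,p1_3)=\iota(t/p,\ell)(p1_6)$ and $g\cdot\diag(p1_3,1_3)=\iota(pt,\ell)$. Using centrality of $p1_6$ and the central character $\omega$, this reduces the first Hecke term in Proposition \ref{GL3T23} to $\omega(p)\lambda(\iota(t/p,\ell)[\diag(1,p,p)]_p)$ and the second to $p^4\lambda(\iota(pt,\ell)[\diag(p,1,1)]_p)$. The scalar term at the end of Proposition \ref{GL3T23} passes through unchanged.

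Next, I will observe that $[\diag(1,p,p)]_p$ and $[\diag(p,p,1)]_p$ are the same Hecke operator on $\GL_3$ (their elementary divisors coincide), so both $\GL_3$ Hecke operators appearing here are exactly the two that were already analyzed, at a general base point $\iota(\tau,\lambda)$, in the proof of Proposition \ref{GL2T03}. From that proof I will extract two substitution rules: first, that $[\diag(p,1,1)]_p$ applied at $\iota(\tau,\lambda)$ yields $p^2\lambda(\iota(p\tau,p\lambda))+(T_p\lambda)(\iota(\tau,\lambda))$, where the $p^2$ term comes from the upper triangular representatives of the form $\left(\begin{smallmatrix}p&a&b\\0&1&0\\0&0&1\end{smallmatrix}\right)$ and the $T_p$ from the remaining $p+1$ representatives; second, that $[\diag(1,p,p)]_p$ applied at $\iota(\tau,\lambda)$ yields $\omega(p)\lambda(\iota(\tau/p,\lambda/p))+p\,\omega(p)^{-1}(T_p\lambda)(\iota(p^2\tau,p\lambda))$, with the $T_p$ term obtained from an intermediate $T_p'$ contribution via Lemma \ref{T'T}.

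Substituting $(\tau,\lambda)=(t/p,\ell)$ into the second rule and multiplying by the outer $\omega(p)$ gives $\omega(p)^2\lambda(\iota(t/p^2,\ell/p))+p(T_p\lambda)(\iota(pt,p\ell))$; substituting $(\tau,\lambda)=(pt,\ell)$ into the first rule and multiplying by the outer $p^4$ gives $p^6\lambda(\iota(p^2t,p\ell))+p^4(T_p\lambda)(\iota(pt,\ell))$. Adding these two contributions to the scalar term produces the claimed identity.

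The main obstacle will be verifying the substitution rules at these particular shifted base points, since for instance $(\tau,\lambda)=(t/p,\ell)$ can violate the hypothesis $|\tau|\le|\lambda|\le 1$ that underlay the proof of Proposition \ref{GL2T03}. Fortunately, Lemma \ref{lem:lambdanonvan} forces $\lambda$ to vanish on all $\iota$-arguments outside this support, so one can check that each coset-representative contribution vanishes whenever the left-hand side does. Equivalently, the $\chi$-triviality argument on conjugates of $U_R(\Z_p)$ by $\iota(\tau,\lambda)$ from the proof of Proposition \ref{GL2T03} must be re-examined at the actual shifted base points, with the integrality of each lift $\tilde v_\beta$ arranged so that all relevant unipotent conjugates remain inside $U_R(\Z_p)$.
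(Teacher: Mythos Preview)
Your proposal is correct and follows the same approach the paper intends by its one-line proof (``completely analogous to Proposition \ref{GL2T03}''): start from Proposition \ref{GL3T23}, rewrite the prefactors via Lemma \ref{identities}, and then decompose the two $\GL_3$ Hecke operators exactly as in the proof of Proposition \ref{GL2T03} using the same coset representatives and Lemma \ref{T'T}.

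One comment on your ``obstacle'': rather than framing the issue as applying the substitution rules at a shifted base point $\iota(t/p,\ell)$ where the hypothesis $|t/p|\le|\ell|$ may fail, it is cleaner to observe that you are really computing $\lambda(g\cdot\diag(1_3,p1_3)\cdot\tilde v_\beta)$ for the \emph{original} $g=\iota(t,\ell)$. The combined element $\diag(1_3,p1_3)\tilde v_\beta$ lies in the Levi $M_R$, and its unipotent part (after factoring out the torus piece that produces the shifted $\iota$-argument) can be chosen integral; the $\chi$-triviality argument then applies at $g$ itself, where $|t|\le|\ell|\le 1$ holds by hypothesis. This sidesteps the need to invoke Lemma \ref{lem:lambdanonvan} as a separate vanishing check and matches what the paper does implicitly.
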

\begin{proof} This is completely analogous to Proposition \ref{GL2T03}.\end{proof}

\subsection{Some lemmas}
We will soon use Proposition \ref{GL2T03} and \ref{GL2T23} to compute $N(s)$ applied to the local Dirichlet series.  Before doing so, we need some lemmas concerning $\lambda$.
\begin{lemma} \label{TpVanish}
If $|\ell| = |p^{r-2}|$, then $(T_p \lambda)(\iota(p^{r-1},p\ell)) = 0$.
\end{lemma}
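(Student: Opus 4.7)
The plan is to show that every summand in $(T_p \lambda)(\iota(p^{r-1}, p\ell)) = \sum_u \lambda(\iota(p^{r-1}, p\ell) T(u))$ vanishes individually, by strengthening the support criterion of Lemma \ref{lem:lambdanonvan}. For $m \in M_R$ with Levi data $(w, x, y, z)$, the combination of right $K$-invariance and left $(U_R, \chi)$-equivariance of $\lambda$ forces $\chi(m n m^{-1}) = 1$ for every $n \in U_R(\mathbf Z_p)$ whenever $\lambda(m) \neq 0$. Applying this with $n$ running through $N_U(\mathbf Z_p)$ and using the relation $x = wz\,{}^t y^{-1}$ yields three integrality conditions on $B := y^{-1}$ in terms of the quadratic form $q(a,b) = -D a^2 + b^2$:
\[
wz \cdot q(B_{1*}) \in \mathbf Z_p, \qquad 2wz \cdot q(B_{1*}, B_{2*}) \in \mathbf Z_p, \qquad wz \cdot q(B_{2*}) \in \mathbf Z_p,
\]
where $B_{i*}$ denotes the $i$-th row of $B$ and $q$ is extended to a bilinear form in the middle condition.

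Writing $\ell_0 = p\ell = x_0 + y_0 \sqrt{D}$, the hypothesis $|\ell| = |p^{r-2}|$ forces $v_p(N(\ell_0)) = r$, so $N(\ell_0) = p^r \eta$ for a unit $\eta \in \mathbf Z_p^\times$; also $wz = p^r$ for each element $\iota(p^{r-1}, \ell_0) T(u)$. For each representative $u$ appearing in the definition of $T_p$, I would compute $B = y^{-1}$ from $y = m_{\ell_0} \cdot p\,{}^t u^{-1}$ and evaluate the three conditions. The algebraic identities $-D x_0^2 + D^2 y_0^2 = -D \cdot N(\ell_0)$ and $-D(a x_0 - y_0)^2 + (-a D y_0 + x_0)^2 = (1 - D a^2) N(\ell_0)$ immediately reduce the obstructions to the following: for $u = \mathrm{diag}(1, p)$, the first condition becomes $-D/(p^2 \eta)$; for $u = \bigl(\begin{smallmatrix} p & a \\ & 1 \end{smallmatrix}\bigr)$ with $a \not\equiv 0 \pmod p$, the mixed condition becomes $-2 a D/(p \eta)$; and for $u = \bigl(\begin{smallmatrix} p & 0 \\ & 1 \end{smallmatrix}\bigr)$, the third condition becomes $1/(p^2 \eta)$. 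Each has strictly negative $p$-adic valuation, so in every case the sharpened support criterion fails and the corresponding term in the sum vanishes.

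The main obstacle is identifying the sharpened support conditions coming from $N_U$-conjugation, which go beyond the row-divisibility condition stated in Lemma \ref{lem:lambdanonvan}; this step requires expanding $m n m^{-1}$ for $n \in N_U$ and projecting to the abelianization of $U_R$ to compute $\chi(m n m^{-1})$, but is otherwise a direct manipulation. Once the three conditions are in hand, the per-$u$ verifications are short and structural, following from the two displayed quadratic form identities together with $v_p(N(\ell_0)) = r$.
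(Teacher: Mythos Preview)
Your proposal is correct and follows essentially the same approach as the paper. Both arguments show each summand $\lambda(\iota(p^{r-1},p\ell)T(u))$ vanishes by exhibiting an $n(S)\in N_U(\mathbf Z_p)$ with $\chi(mn(S)m^{-1})\neq 1$: the paper picks an ad hoc $S$ for each $u$ (e.g.\ $S=\mm{0}{1}{1}{0}$ for $a\not\equiv 0$), while you systematize this into the three conditions $wz\cdot q(B_{1*}),\,2wz\cdot q(B_{1*},B_{2*}),\,wz\cdot q(B_{2*})\in\mathbf Z_p$ coming from $S=E_{11},\,E_{12}+E_{21},\,E_{22}$ and then observe that the quadratic form identities collapse each obstruction to a multiple of $N(\ell_0)$, making the failure transparent.
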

\begin{proof} Observe that
\[\frac{1}{p}\left(\begin{array}{cc} p&a \\ &1 \end{array}\right) \left(\begin{array}{cc} &1 \\ 1& \end{array}\right) \left(\begin{array}{cc} p& \\ a&1 \end{array}\right) = \left(\begin{array}{cc} 2a&1 \\ 1& \end{array}\right).\]
Set
\[u = \left(\begin{array}{cc} p&a \\ &1 \end{array}\right) \textrm{ and } w = \left(\begin{array}{cc} &1 \\ 1& \end{array}\right).\]
Since $\iota(p^{r-1},p\ell)n(w)\iota(p^{r-1},p\ell)^{-1} \equiv n(w/p) \mod N_L$, we have
\[\lambda(\iota(p^{r-1},p\ell)T(u)) = \lambda(\iota(p^{r-1},p\ell)T(u)n(w)) = \psi(-D\frac{2a}{p})\lambda(\iota(p^{r-1},p\ell)T(u)).\]
We conclude that $\lambda(\iota(p^{r-1},p\ell)T(u)) = 0$ when $a \neq 0$.  This argument handles $p-1$ of the $p+1$ terms in the sum defining $T_p$; the other two terms are similar.
\end{proof}

\begin{proposition}\label{TpExtremal1}
We have $(T_p \lambda)(\iota(p^r,1)) = \lambda(\iota(p^r,1)\tau)$.  Moreover, if $|\ell| = |p^r|$, then 
\[(T_p\lambda)(\iota(p^r,\ell)) = \begin{cases} 0 & p\ \mathrm{inert}\\ \lambda(\iota(p^{r-1},\ell\pi_1/p)) + \lambda(\iota(p^{r-1},\ell\pi_2/p)) & p\ \mathrm{ split.} \end{cases}\]
\end{proposition}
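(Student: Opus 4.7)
My plan is to evaluate each of the $p+1$ summands of $T_p\lambda$ using Lemma \ref{lem:lambdanonvan} together with a Weyl-element argument in the spirit of Lemma \ref{TpVanish}. For the first identity, one sees by inspection that $T(\mathrm{diag}(1,p)) = \tau$, so that summand contributes exactly $\lambda(\iota(p^r,1)\tau)$. For each $u_a = \left(\begin{smallmatrix} p & a \\ 0 & 1 \end{smallmatrix}\right)$, the product $\iota(p^r,1)T(u_a)$ lies in $M_R$ with $z=p$ and $y$-block $\left(\begin{smallmatrix} 1 & 0 \\ -a & p \end{smallmatrix}\right)$, whose top row $(1,0)$ is not divisible by $z$; hence $\lambda(\iota(p^r,1)T(u_a))=0$ by Lemma \ref{lem:lambdanonvan}, yielding the first identity.

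For the second identity in the split case, the key input is the explicit identity $\iota(p,\pi_i)T(u_{a_0^{(i)}}) = p\cdot 1_6 \cdot k_i$ with $k_i \in K := \GSp_6(\mathbf{Z}_p)$, where $a_0^{(i)}$ is the residue in $\{0,\ldots,p-1\}$ satisfying $a_0^{(1)}h \equiv -1$ and $a_0^{(2)}h \equiv 1$ modulo $p$. One verifies this by expanding $\frac{1}{p}\iota(p,\pi_i)T(u_{a_0^{(i)}})$ entrywise using the explicit form of $m_{\pi_i}$ from Lemma \ref{lem:hlem} and the relation $p\cdot{}^tm_{\pi_i}^{-1} = {}^tm_{\bar\pi_i}$, and checking that each entry is $p$-adically integral precisely because of the congruence on $a_0^{(i)}$. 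Since $\iota$ is a homomorphism $\mathbf{Q}_p^\times\times L_p^\times \to \GSp_6(\mathbf{Q}_p)$, writing $\ell = \pi_1^{a_1}\pi_2^{a_2}$ yields $\iota(p^r,\ell)T(u_{a_0^{(i)}}) = (p\cdot 1_6)\,\iota(p^{r-1},\ell/\pi_i)\,k_i$, and the central-character identity gives $\lambda(\iota(p^r,\ell)T(u_{a_0^{(i)}})) = \omega(p)\lambda(\iota(p^{r-1},\ell\pi_j/p))$ where $\pi_j$ is the other uniformizer (using $\pi_j/p = 1/\pi_i$). When $\ell/\pi_i$ is not integral, Lemma \ref{lem:lambdanonvan} makes this term vanish, so the formula is preserved in every case.

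For the remaining summands, I would apply the Weyl trick: for $u = \mathrm{diag}(1,p)$ pick $w = \mathrm{diag}(1,0)$, and for $u_a$ with $a \not\in \{a_0^{(1)},a_0^{(2)}\}$ pick $w = \mathrm{diag}(0,1)$. As in the proof of Lemma \ref{TpVanish}, $T(u)n(w)T(u)^{-1} = n(\frac{1}{p}u w\,{}^tu)$, and the $\chi$-character of its $\iota(p^r,\ell)$-conjugate evaluates to $\psi(-D/p)$ in the first case (nontrivial since $p\nmid D$) and $\psi((1-Da^2)/p)$ in the second (nontrivial unless $Da^2 \equiv 1\pmod p$, i.e., $a \equiv \pm 1/h$). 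In the inert case $D$ is a nonsquare modulo $p$ and no such $a$ exists, so all $p+1$ summands vanish and the sum is $0$; in the split case only the two summands with $a\in\{a_0^{(1)},a_0^{(2)}\}$ survive, matching the two-term right-hand side. The principal obstacle is the entrywise integrality check: one must carefully track the $1/p$ factors arising from ${}^tu_{a_0^{(i)}}^{-1}$ and ${}^tm_{\pi_i}^{-1}$ and see them cancelled precisely by expressions of the form $1 \pm a_0^{(i)}h$ that vanish modulo $p$ by the defining congruence.
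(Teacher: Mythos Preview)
Your proof is correct and follows essentially the same approach as the paper's. Both arguments use Lemma~\ref{lem:lambdanonvan} to kill the $u_a$ summands in the first identity, and the Weyl-element trick of Lemma~\ref{TpVanish} (with the same choices $w=\mathrm{diag}(1,0)$ and $w=\mathrm{diag}(0,1)$) to eliminate all but the two special summands in the second; the surviving terms are then identified with $\iota(p^{-1},\pi_i/p)(p\cdot 1_6)$ up to $K$, which you phrase via the identity $\iota(p,\pi_i)T(u_{a_0^{(i)}})\in (p\cdot 1_6)K$ and the multiplicativity of $\iota$, while the paper simply asserts the right $M_R\cap K$ equivalence.

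One small caution: the character values you write, $\psi(-D/p)$ and $\psi((1-Da^2)/p)$, are only correct up to a $p$-adic unit factor $p^r/N(\ell)$ coming from the conjugation by $\iota(p^r,\ell)$ (the paper's statement has the same imprecision). This does not affect the vanishing argument, since nontriviality of $\psi(c/p)$ for $c\in\mathbf{Z}_p$ depends only on $c\bmod p$, and multiplying by a unit preserves this.
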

\begin{proof} If $u =\left(\begin{array}{cc} p&a \\ &1 \end{array}\right)$, then $\iota(p^r,1)T(u)$ does not satisfy the conditions of Lemma \ref{lem:lambdanonvan}: $p$ does not divide the top row $y$.  Thus only one term contributes to the sum defining $T_p\lambda(\iota(p^r,1))$, giving the first part of the proposition.

Now suppose $|\ell| = |p^r|.$  If $u = \left(\begin{array}{cc} 1& \\ &p \end{array}\right)$, then by taking $w = \left(\begin{array}{cc} 1& \\ & \end{array}\right)$, it follows as in Lemma \ref{TpVanish} that $\lambda(\iota(p^r,\ell)T(u)) = 0$.  Similarly, if $u = \left(\begin{array}{cc} p& a\\ &1 \end{array}\right)$, then by taking $w = \left(\begin{array}{cc} & \\ &1 \end{array}\right)$, it follows as in Lemma \ref{TpVanish} that $\lambda(\iota(p^r,\ell)T(u)) = \psi(\frac{-Da^2+1}{p})\lambda(\iota(p^r,\ell)T(u))$.

It follows that $\lambda(\iota(p^r,\ell)T(u)) = 0$ when $p$ is inert, giving the proposition in this case.  If $p$ is split, then the only two terms that may contribute to the sum defining $T_p\lambda(\iota(p^r,\ell))$ are the ones with $a$ a squareroot of $D^{-1}$ modulo $p$.  These two terms are right $M_R \cap \GSp_6(\mathbf Z_p)$-equivalent to the two terms $(p^{-1},\pi_i/p)(p1_6)$, $i = 1,2$, giving the proposition in the split case.
\end{proof}

\begin{proposition}\label{TpExtremal2} If $k \geq 1$ and $p$ is split, then
\[(T_p\lambda)(\iota(p^r,\pi_i^k)) = \lambda(\iota(p^{r-1},\pi_i^{k-1}))\]
for $i = 1,2$.
\end{proposition}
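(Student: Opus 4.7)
The plan is to mirror the arguments of Lemma~\ref{TpVanish} and Proposition~\ref{TpExtremal1}, which already handle the boundary cases $\ell = 1$ and $|\ell| = |p^r|$. First expand the Hecke sum as
\[(T_p\lambda)(\iota(p^r,\pi_i^k)) = \lambda(\iota(p^r,\pi_i^k)T(u_0)) + \sum_{a=0}^{p-1}\lambda(\iota(p^r,\pi_i^k)T(u_a)),\]
where $u_0 = \left(\begin{smallmatrix}1&\\&p\end{smallmatrix}\right)$ and $u_a = \left(\begin{smallmatrix}p&a\\&1\end{smallmatrix}\right)$, and use Lemma~\ref{lem:hlem} to replace $m_{\pi_i^k}$ throughout by its right $\GL_2(\mathbf{Z}_p)$-equivalent upper-triangular form $\left(\begin{smallmatrix}p^k&\mp h\\&1\end{smallmatrix}\right)$, the sign depending on $i$. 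All subsequent computations are then explicit $6\times 6$ matrix manipulations.

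For the term $u = u_0$, and for each term $u = u_a$ with $a$ not congruent to a distinguished value $a^\ast$ modulo $p$, I would construct a symmetric $w \in M_2(\mathbf{Q}_p)$ such that the conjugate
\[\iota(p^r,\pi_i^k)\,T(u)\,n(w)\,T(u)^{-1}\,\iota(p^r,\pi_i^k)^{-1}\]
lies in $U_R(\mathbf{Z}_p)$ modulo $N_L$ but has $\chi$-value of the form $\psi(c/p)$ for some $c \in \mathbf{Z}_p^\times$. By the $(U_R,\chi)$-equivariance of $\lambda$, all such terms vanish; this is the same mechanism invoked repeatedly in Lemma~\ref{TpVanish} and in the split-case argument of Proposition~\ref{TpExtremal1}.

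The distinguished value $a^\ast$ is the square root of $D^{-1}$ modulo $p$ singled out by the sign $\mp h$ attached to $\pi_i$ via Lemma~\ref{lem:hlem}; this is precisely the indexing by which Proposition~\ref{TpExtremal1} produces a two-term sum over the two square roots of $D^{-1}$. For the single surviving index $u_{a^\ast}$, I would verify by direct matrix multiplication that
\[\iota(p^r,\pi_i^k)\,T(u_{a^\ast}) \in \iota(p^{r-1},\pi_i^{k-1})\cdot (p\,1_6) \cdot \GSp_6(\mathbf{Z}_p).\]
Invoking the central character then produces the factor $\omega(p)$, yielding the claimed equality.

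The main obstacle is the explicit identification of $a^\ast$ and the verification of the right $\GSp_6(\mathbf{Z}_p)$-equivalence to $\iota(p^{r-1},\pi_i^{k-1})\cdot (p\,1_6)$. Both rely on careful bookkeeping of the sign $\mp h$ from Lemma~\ref{lem:hlem}: that sign is what distinguishes $\pi_i$ from $\pi_j$ ($j\neq i$) and thereby forces the symmetric two-term formula of Proposition~\ref{TpExtremal1} to collapse here to the single $\pi_i^{k-1}$ contribution.
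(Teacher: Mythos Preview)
Your plan follows the second half of Proposition~\ref{TpExtremal1}, namely the conjugation trick with $n(w)$ for symmetric $w\in M_2(\mathbf Z_p)$ (elements of $N_U$). That mechanism is too weak here. A direct computation gives
\[\chi\bigl(\iota(p^r,\pi_i^k)\,T(u)\,n(w)\,T(u)^{-1}\,\iota(p^r,\pi_i^k)^{-1}\bigr)
=\psi\!\left(\frac{p^{\,r-k}}{p}\bigl[-D(uw\,{}^tu)_{11}+(uw\,{}^tu)_{22}\bigr]\right),\]
and since $uw\,{}^tu$ is integral whenever $w$ is, the argument of $\psi$ lies in $\mathbf Z_p$ as soon as $r>k$. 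Thus for $r>k$ the character is identically $1$ for every integral $w$ and every $u$ in the Hecke sum, so none of the $p+1$ terms is killed. The proposition is applied in the proof of Theorem~\ref{thm:split} with $r>k$ (for instance to $(T_p\lambda)(\iota(p^{r-1},\pi_i^k))$ with $1\le k\le r-2$), so this is fatal. Even when $r=k$, the $N_U$-trick only isolates the two residues $a$ with $a^2\equiv D^{-1}\pmod p$; it cannot single out one of them, so the promised collapse to a single $\pi_i^{k-1}$ term does not follow from this mechanism alone.

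The paper's proof instead mirrors the \emph{first} half of Proposition~\ref{TpExtremal1}: it invokes the support criterion Lemma~\ref{lem:lambdanonvan}, whose proof is based on conjugation by $n_v\in N_V$ rather than $n(w)\in N_U$. Using the explicit form of $m_{\pi_i^k}$ from Lemma~\ref{lem:hlem}, one checks that the bottom-right entry $z=p$ of $\iota(p^r,\pi_i^k)T(u)$ divides the top row of its $y$-block for exactly one value of $a$ modulo $p$, the sign of $\pm h^{-1}$ being determined by $i$; this argument is uniform in $r$. Your final step---showing that the surviving term is right $\GSp_6(\mathbf Z_p)$-equivalent to $\iota(p^{r-1},\pi_i^{k-1})\cdot(p\,1_6)$ via Lemma~\ref{lem:hlem} and then extracting the factor $\omega(p)$---is correct and matches the paper.
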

\begin{proof} Using Lemma \ref{lem:hlem}, one sees that $\iota(p^r,\pi_1^k)T(u)$ does not satisfy the nonvanishing criterion Lemma \ref{lem:lambdanonvan} unless $p{}^tu^{-1} = \left(\begin{array}{cc} p&a \\ &1 \end{array}\right)$ with $a \equiv h \mod p$.  Again from Lemma \ref{lem:hlem}, this $T(u)$ is right $M_R \cap \GSp_6(\mathbf Z_p)$-equivalent to $\iota(p^{-1},\pi_2/p)(p1_6)$.  This gives the lemma for $i=1$; the $i=2$ case is the same.
\end{proof}

\subsection{Final computation}
The results of the previous two subsections give us enough tools to compute $N(s)$ applied to the Dirichlet series
\[D(s) = \zeta_p(2s-6)\sum{|t|^{-6}|\ell|^2 \lambda(\iota(t,\ell))|t|^s}\]
directly.  The lengthy calculation is summarized in the following two theorems.
\begin{theorem}[Inert case] \label{thm:inert}
Suppose $p$ is inert in $L$.  The coefficient of $p^{-rs}$ in $N(s)D(s)$ is
\[\lambda(\iota(p^r,1))p^{6r} - \lambda(\iota(p^{r-1},1)\tau)p^{6r-6}.\] \end{theorem}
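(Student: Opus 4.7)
The plan is to compute $[p^{-rs}]N(s)D(s)$ directly, by expanding $N(s)$ in powers of $p^{-s}$, reducing each Hecke operator applied to $\lambda(\iota(p^m,p^a))$ using the propositions of the previous subsection, and then assembling the result. Since $p$ is inert in $L$, representatives of $\ell$ modulo $\mathcal{O}_L^\times$ are the powers $p^a$ with $a \ge 0$, giving $|\ell| = p^{-2a}$, and the nonvanishing constraint $|t|\le|\ell|\le 1$ becomes $m \ge 2a \ge 0$, where $t = p^m$. Writing $N(s) = \sum_i N_i p^{-is}$ for $i \in \{0,2,3,4,6\}$ and $\zeta_p(\omega,2s-6) = \sum_{k\ge 0}\omega(p)^k p^{6k-2ks}$, one obtains
\[[p^{-rs}]N(s)D(s) = \sum_{\substack{i+2k+m=r\\ i\in\{0,2,3,4,6\}}}\omega(p)^k p^{6(k+m)}\sum_{0\le a\le m/2} p^{-4a}(N_i\lambda)(\iota(p^m,p^a)).\]

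The first step is to apply Propositions \ref{GL2T03} and \ref{GL2T23} to rewrite each $(N_i\lambda)(\iota(p^m,p^a))$ as a sum of explicit shifts $\lambda(\iota(p^{m'},p^{a'}))$ together with values of $T_p\lambda$, noting that $T_{3,3}$ is central and contributes only the scalar $\omega(p)$. The decisive inert-case simplification is given by Lemma \ref{TpVanish} and the inert portions of Propositions \ref{TpExtremal1} and \ref{TpExtremal2}: these show $(T_p\lambda)(\iota(p^{m'},p^b)) = 0$ whenever $b \ge 1$, while $(T_p\lambda)(\iota(p^{m'},1)) = \lambda(\iota(p^{m'},1)\tau)$. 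This causes the overwhelming majority of the terms produced by the expansion to vanish.

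The surviving contributions organize as follows. The $N_0 = 1$ piece, paired with $\zeta_p(\omega,2s-6)$, contributes the full double sum over $(k,m,a)$; the pieces $N_2,N_3,N_4,N_6$, after simplification, produce shifted copies of this same sum, designed to cancel it except for the two advertised terms. The contribution $p^{6r}\lambda(\iota(p^r,1))$ arises from the $N_0$ piece at $(k,m,a) = (0,r,0)$; the contribution $-p^{6r-6}\lambda(\iota(p^{r-1},1)\tau)$ arises from the surviving $T_p$-piece of $N_2\lambda$ at $\iota(p^{r-1},1)$, via the identity $(T_p\lambda)(\iota(p^{r-1},1)) = \lambda(\iota(p^{r-1},1)\tau)$ from Proposition \ref{TpExtremal1}.

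The main obstacle is the bookkeeping needed to verify that every other contribution cancels exactly. A convenient organizing device is the near-factorization $N_4 p^{-4s} = p^5T_{3,3}p^{-2s}\cdot N_2 p^{-2s}$ and $N_6 p^{-6s} = (p^5 T_{3,3} p^{-2s})^3$: contributions from $N_4$ and $N_6$ are then scalar multiples of contributions from $N_2$ and $N_0$, shifted in the central direction via $T_{3,3}$, so the telescoping cancellation can be witnessed term-by-term rather than only after full expansion.
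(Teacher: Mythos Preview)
Your overall strategy---expand $N(s)$, apply Propositions \ref{GL2T03} and \ref{GL2T23}, then use the auxiliary lemmas on $T_p$---matches the paper's, and your observation that $N_4 = p^5T_{3,3}\cdot N_2$ and $N_6 = (p^5T_{3,3})^3$ is a clean way to organize part of the telescoping. However, there is a genuine gap: the claim that Lemma \ref{TpVanish} and Propositions \ref{TpExtremal1}, \ref{TpExtremal2} show $(T_p\lambda)(\iota(p^{m'},p^b))=0$ for \emph{all} $b\ge 1$ is not correct. Proposition \ref{TpExtremal2} has no inert statement at all, Lemma \ref{TpVanish} applies only at the single boundary value $|\ell|=|p^{r-2}|$, and the inert case of Proposition \ref{TpExtremal1} applies only at the other boundary $|\ell|=|p^r|$. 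For intermediate $b$ there is no vanishing result available, and indeed none is needed: in the paper's computation those $T_p$-values cancel \emph{algebraically} between the different $N_i$ pieces before any evaluation of $T_p$ is invoked. Your mechanism for killing the bulk of the terms is therefore wrong, and the proposal as written does not establish the theorem.

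Concretely, the paper first works with $D'(s)=D(s)/\zeta_p(\omega,2s-6)$ and groups all contributions to $[p^{-rs}]N(s)D'(s)$ by the power of $p$ appearing in the $t$-slot of $\lambda(\iota(t,\ell))$. Within each fixed $t=p^{r-j}$, the sums over $\ell$ coming from the various $N_i$ combine so that the interior of the $\ell$-range cancels identically, leaving only contributions at $|\ell|=1$ or at the extremal values where Lemma \ref{TpVanish} and Proposition \ref{TpExtremal1} do apply. Only after this reduction does one obtain, in the inert case, the four-term expression
\[
\lambda(\iota(p^r,1))p^{6r}-\lambda(\iota(p^{r-1},1)\tau)p^{6r-6}-\omega(p)p^6\lambda(\iota(p^{r-2},1))p^{6r-12}+\omega(p)p^6\lambda(\iota(p^{r-3},1)\tau)p^{6r-18},
\]
and multiplying back by $\zeta_p(\omega,2s-6)$ telescopes the last two terms against earlier coefficients. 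If you revise, you should either carry out this grouping explicitly or otherwise demonstrate the algebraic cancellation of the generic $T_p$-terms; the boundary lemmas alone are not enough.
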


\begin{theorem}[Split case] \label{thm:split}
Suppose $p$ is split in $L$.  The coefficient of $p^{-rs}$ in $N(s)D(s)$ is
\[\lambda(\iota(p^r,1))p^{6r} - \lambda(\iota(p^{r-1},1)\tau)p^{6r-6}\]
\[+\sum_{i=1,2, 1 \leq k \leq r}{\lambda(\iota(p^r,\pi_i^k))|\pi_i^k|^2p^{6r}}
-\sum_{0 \leq j \leq r-2}{p^4 |\pi_i^j|^2\lambda(\iota(p^{r-2},\pi_i^j))p^{6r-12}}.\] \end{theorem}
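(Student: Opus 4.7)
The plan is to compute the coefficient of $p^{-rs}$ in $N(s)D(s)$ by direct expansion, using the Hecke operator reductions developed above. First, since $T_{3,3}$ is the characteristic function of the single double coset $(p \cdot 1_6)\GSp_6(\mathbf{Z}_p)$ and $p \cdot 1_6$ is central, one has $T_{3,3}\lambda = \omega(p)\lambda$. Every factor of $T_{3,3}$ in $N(s)$ is thus a scalar, and $N(s)$ reduces to a polynomial in $p^{-s}$ whose coefficients lie in the span of $1$, $T_{0,3}$, and $T_{2,3}$ over $\mathbf{Z}[\omega(p)]$.

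Next, expand $\zeta_p(\omega, 2s-6) = \sum_{m \ge 0} \omega(p)^m p^{(6-2s)m}$ and enumerate the sum defining $D(s)$: writing $t = p^{r_0}$ and $\ell$ over a fixed set of representatives modulo $\mathcal{O}_L^\times$ satisfying $|\ell| \le |p^{r_0}|$, the coefficient of $p^{-rs}$ in $D(s)$ becomes an explicit sum over triples $(m, r_0, \ell)$ with $2m + r_0 = r$. For each monomial $p^{-ks}$ in $N(s)$ with $k \in \set{0, 2, 3, 4, 6}$, apply the corresponding operator to $\lambda(\iota(p^{r_0}, \ell))$ by substituting Propositions \ref{GL2T03} and \ref{GL2T23}. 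Each such application produces a mix of shifted $\lambda$-evaluations and $T_p\lambda$ contributions; the $T_p\lambda$ pieces are then resolved by Lemma \ref{TpVanish}, Proposition \ref{TpExtremal1}, and Proposition \ref{TpExtremal2}, which together show that most $T_p\lambda$ terms vanish and that the surviving ones collapse to a single $\lambda$-value such as $\lambda(\iota(p^{r'}, 1)\tau)$ or $\omega(p)\lambda(\iota(p^{r'-1}, \pi_i^{k-1}))$.

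The main obstacle is the bookkeeping. The five monomials of $N(s)$, the six and five summands from Propositions \ref{GL2T03} and \ref{GL2T23}, and the geometric series from $\zeta_p$ together produce many contributions to the coefficient of $p^{-rs}$, and most must cancel to leave the claimed four-term expression. The organizing principle is to group contributions by the shape $\iota(p^{r'}, \ell')$ appearing in the resulting $\lambda$: the terms $\sum_i \lambda(\iota(p^r, \pi_i^k))|\pi_i^k|^2 p^{6r}$ (including the $k=0$ case) come from the identity term in $N(s)$ acting on $D(s)$; the $-\lambda(\iota(p^{r-1}, 1)\tau) p^{6r-6}$ term arises from $-p^2 T_{2,3} p^{-2s}$ acting on $\iota(p^{r-2}, 1)$, whose $T_p\lambda$ part collapses via Proposition \ref{TpExtremal1}; and the negative $\iota(p^{r-2}, \pi_i^j)$ sum emerges from the $T_{0,3}$ and $T_{2,3}$ contributions, combined with Proposition \ref{TpExtremal2} and the split case of Proposition \ref{TpExtremal1}. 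The additional summands produced by the extra scalar pieces of $N(s)$ (the $(p^4+p^2+1)$-factors) and by the geometric series from $\zeta_p$ must cancel against the remaining $\lambda$-terms. After a methodical tally and careful application of Lemma \ref{TpVanish} to eliminate the spurious $T_p\lambda$ contributions, the four sums in the target formula remain.
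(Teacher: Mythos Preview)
Your outline follows the same overall strategy as the paper: reduce $T_{3,3}$ to $\omega(p)$, expand the remaining Hecke operators via Propositions \ref{GL2T03} and \ref{GL2T23}, simplify the $T_p$-pieces with Lemma \ref{TpVanish} and Propositions \ref{TpExtremal1}--\ref{TpExtremal2}, and then group surviving terms by the first argument of $\iota$. So the approach is correct in spirit.

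There are, however, two organizational devices in the paper's argument that you do not mention, and without them the ``methodical tally'' you allude to is considerably harder to execute. First, the paper does \emph{not} expand $\zeta_p(\omega,2s-6)$ as a geometric series at the outset. Instead it writes $D(s)=\zeta_p(\omega,2s-6)D'(s)$, computes the coefficient of $p^{-rs}$ in $N(s)D'(s)$ completely (obtaining a finite expression with terms at $t=p^r,p^{r-1},\ldots,p^{r-4}$), and only then multiplies by the zeta factor; this last step telescopes pairs of terms two apart in $r$ and is what actually collapses the answer to four terms. Your plan of carrying the extra geometric-series index $m$ throughout would eventually give the same result, but the cancellations are much less visible. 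Second, the paper splits $N=N_1+N_2$, isolating into $N_2$ the $\delta_{|t|,|\ell|}$ contributions coming from Proposition \ref{GL2T23}; these $N_2$ terms are precisely what cancel against residual boundary terms from the $t=p^{r-2}$ and $t=p^{r-4}$ blocks of $N_1*D'$. You do not account for the $\delta_{|t|,|\ell|}$ piece at all, and without tracking it separately the cancellation pattern is obscured. Your attributions of where the four surviving terms originate are also oversimplified (each involves contributions from several monomials of $N(s)$, not just one), but that is a minor point compared to the two structural choices above.
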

\begin{proof} This is a direct, but tedious calculation.  Define $D'(s)$ to be the sum above in $D(s)$, except without the zeta factor, i.e., $\zeta_p(2s-6)D'(s)=D(s)$.  The first step is to use Propositions \ref{GL2T03} and \ref{GL2T23} to write an expression for the terms in $N(s)D'(s)$.  Expanding all the sums, there is a large amount of cancellation between the different terms in the expansion of $N(s)$.  Lemma \ref{TpVanish} and Propositions \ref{TpExtremal1} and \ref{TpExtremal2} give additional simplification.  Multiplying the result by the zeta factor $\zeta_p(2s-6)$ yields a telescoping expression, which gives the theorem.

Now we give the details.  First, we split up $N$ into two parts, $N_1$ and $N_2$, where $N_2$ contains the $\delta_{|t|,|\ell|}$ terms, and $N_1$ contains all the rest of the terms.  The coefficient of $p^{-rs}$ in $N_2 * D'(s)$ is
\begin{align*}&\sum_{|\ell| = |p^{r-2}|}{(-p^2)(-p^3+\epsilon(p)p^2)\lambda(\iota(p^{r-2}, \ell))|\ell|^2p^{6(r-2)}} \\&+ \sum_{|\ell| = |p^{r-4}|}{(-p^7)(-p^3+\epsilon(p)p^2)\lambda(\iota(p^{r-4}, \ell))|\ell|^2p^{6(r-4)}}. \end{align*}

Now we compute the coefficient of $p^{-rs}$ in $N_1 * D'(s)$.  We obtain
\begin{flalign*} 
 & \sum_{|p^r| \leq |\ell| \leq 1}{\lambda(\iota(p^r,\ell))|\ell|^2p^{6r}} & \\
+&p^{-2s}\cdot (-p^2) \left\{\sum_{|p^{r-2}| \leq |\ell| \leq 1}\left[\lambda(\iota(p^{r-4},\ell/p))+ p(T_p\lambda)(\iota(p^{r-1},p\ell)) + p^6\lambda(\iota(p^r, p \ell)) \right. \right. & \\ &\left. \left. +p^4(T_p\lambda)(\iota(p^{r-1},\ell)) + (p^4+p^3+p^2)\lambda(\iota(p^{r-2},\ell))\right]|\ell|^2 p^{6(r-2)}\vphantom{\sum_{|p^{r-2}| \leq |\ell| \leq 1}}\right\}& \\
+&p^{-3s} \cdot (p^5+p^4) \left\{\sum_{|p^{r-3}| \leq |\ell| \leq 1} \left[\lambda(\iota(p^{r-4},\ell))+p^3\lambda(\iota(p^{r-2},p\ell)) \right. \right. & \\ &\left. \left. + p(T_p\lambda)(\iota(p^{r-3},\ell)) + p^3\lambda(\iota(p^{r-4}, \ell/p))+p^4 (T_p\lambda)(\iota(p^{r-1},p\ell)) \right. \right. & \\ &\left. \left. + p^6\lambda(\iota(p^{r-2},\ell))\right]|\ell|^2p^{6(r-3)}\vphantom{\sum_{|p^{r-3}| \leq |\ell| \leq 1}}\right\}& \\
+&p^{-4s} \cdot(-p^7) \left\{\sum_{|p^{r-4}| \leq |\ell| \leq 1}\left[\lambda(\iota(p^{r-6},\ell/p))+ p(T_p\lambda)(\iota(p^{r-3},p\ell)) \right. \right. & \\ &\left. \left. + p^6\lambda(\iota(p^{r-2}, p \ell))  +p^4(T_p\lambda)(\iota(p^{r-3},\ell)) + (p^4+p^3+p^2)\lambda(\iota(p^{r-4},\ell))\right]|\ell|^2 p^{6(r-4)}\vphantom{\sum_{|p^{r-4}| \leq |\ell| \leq 1}}\right\}& \\
+&p^{-6s}p^{15}\left\{\sum_{|p^{r-6}| \leq |\ell| \leq 1}\left[\lambda(\iota(p^{r-6},\ell))\right]|\ell|^2p^{6(r-6)}\right\}.&
\end{flalign*}

Now we group the terms together by the power of $p$ appearing in the $t$ spot in $\lambda(\iota(t,\ell))$.  Listing only the terms of the form $\lambda(\iota(p^r,\ell))$, i.e.\ when the power is $r$, we obtain
\begin{flalign*}
&\left(\sum_{|p^r| \leq |\ell| \leq 1}{\lambda(\iota(p^r,\ell))|\ell|^2p^{6r}}\right) - \left(\sum_{|p^{r-2}| \leq |\ell| \leq 1}{\lambda(\iota(p^r,p\ell))|p\ell|^2p^{6r}}\right),& \end{flalign*}
which simplifies to
\begin{flalign*}\textbf{Case }t=p^r:\ &\sum_{|p^r| \leq |\ell| \leq 1, p \nmid \ell}{\lambda(\iota(p^r,\ell))|\ell|^2p^{6r}}.&\end{flalign*}

We next list the terms of the form $\lambda(\iota(p^{r-1},\ell))$.  These are
\begin{flalign*}
&\sum_{|p^{r-2}| \leq |\ell| \leq 1}{\left[(-p^3)(T_p\lambda)(\iota(p^{r-1},p\ell))+(-p^6)(T_p\lambda)(\iota(p^{r-1},\ell))\right]|\ell|^2 p^{6(r-2)}} &\\
& + \sum_{|p^{r-3}| \leq |\ell| \leq 1}(p^9+p^8)\left[(T_p\lambda)(\iota(p^{r-1},p\ell))\right]|\ell|^2p^{6(r-3)}&\\
& = \sum_{|p^{r-2}| = |\ell|}{(-p^3)|\ell|^2 p^{6(r-2)}(T_p\lambda)(\iota(p^{r-1},p\ell))}+ \sum_{|p^{r-3}| =|\ell|}{p^6|p\ell|^2(T_p\lambda)(\iota(p^{r-1},p\ell))p^{6(r-2)}} &\\
& + \sum_{|p^{r-2}| \leq |\ell| \leq 1, p \nmid \ell}{(-p^6)|\ell|^2(T_p\lambda)(\iota(p^{r-1},\ell))p^{6(r-2)}}.\end{flalign*}
By Lemma \ref{TpVanish}, the first term is zero.  By Proposition \ref{TpExtremal1}, the second term is zero when $p$ is inert, and simplifies when $p$ is split.  We obtain
\begin{flalign*}\textbf{Case }p\ \textbf{inert},\ t=p^{r-1}:\ &(-p^6)\lambda(\iota(p^{r-1},1)\tau)p^{6r-12} &\end{flalign*}
by Proposition \ref{TpExtremal1} and
\begin{flalign*}\textbf{Case }p\ \textbf{split},\ t=p^{r-1}:\ &(-p^6) \lambda(\iota(p^{r-1},1)\tau)p^{6r-12} &\\& + \sum_{|\ell| = |p^{r-3}|}{p^2|\ell|^2\left[\lambda(\iota(p^{r-2},\pi_1 \ell)) + \lambda(\iota(p^{r-2},\pi_2\ell))\right]p^{6r-12}} & \\ &+ \sum_{1 \leq k \leq r-2, i = 1,2}{(-p^6)p^{-2k}\lambda(\iota(p^{r-2},\pi_i^{k-1}))p^{6r-12}} & \end{flalign*}
using Proposition \ref{TpExtremal2}.  

The terms of the form $\lambda(\iota(p^{r-2},\ell))$ are
\begin{flalign*}
& -p^2(p^4+p^3+p^2)\left\{\sum_{|p^{r-2}| \leq |\ell| \leq 1}{\lambda(\iota(p^{r-2},\ell))|\ell|^2p^{6r-12}}\right\} &\\ &+ (p^5+p^4)\left\{\sum_{|p^{r-3}| \leq |\ell| \leq 1}{|\ell|^2p^{6r-18}\left[p^3\lambda(\iota(p^{r-2},p\ell))+p^6\lambda(\iota(p^{r-2},\ell))\right]}\right\} & \\ &-\left\{\sum_{|p^{r-4}| \leq |\ell| \leq 1}{p^{13}\lambda(\iota(p^{r-2},p\ell))|\ell|^2p^{6r-24}} \right\}.\end{flalign*}
This simplifies to
\begin{flalign*}
&\sum_{|\ell| = |p^{r-2}|}{-(p^5+p^4)\lambda(\iota(p^{r-2},\ell))|\ell|^2p^{6r-12}}-p^6\left\{\sum_{|p^{r-2}| \leq |\ell| \leq 1}{\lambda(\iota(p^{r-2},\ell))|\ell|^2p^{6r-12}}\right\}&\\ &+ \left\{\sum_{|p^{r-3}| = |\ell|}{p\lambda(\iota(p^{r-2},p\ell))|\ell|^2p^{6r-12}}\right\} + p^2\left\{\sum_{|p^{r-3}| \leq |\ell| \leq 1}{\lambda(\iota(p^{r-2},p\ell))|\ell|^2p^{6r-12}}\right\},\end{flalign*}
which finally gives
\begin{flalign*}\textbf{Case }t=p^{r-2}:\ & \left\{\sum_{|\ell|=|p^{r-2}|}{-(p^5+p^4)\lambda(\iota(p^{r-2},\ell))|\ell|^2p^{6r-12}}\right\}&\\&-p^6\left\{\sum_{|p^{r-2}| \leq |\ell| \leq 1, p \nmid \ell}{\lambda(\iota(p^{r-2},\ell))|\ell|^2p^{6r-12}}\right\}& \end{flalign*}
since the third term is zero by Lemma \ref{lem:lambdanonvan}.

The terms of the form $\lambda(\iota(p^{r-3},\ell))$ combine to give
\begin{flalign*}
& \sum_{|p^{r-3}|=|\ell|}{p^5(T_p\lambda)(\iota(p^{r-3},\ell))|\ell|^2p^{6r-18}} + \sum_{|p^{r-3}| \leq |\ell| \leq 1}{p^6(T_p\lambda)(\iota(p^{r-3},\ell))|\ell|^2p^{6r-18}}& \\&- \sum_{|p^{r-4}| \leq |\ell| \leq 1}{p^6(T_p\lambda)(\iota(p^{r-3},p\ell))|p\ell|^2p^{6r-18}}&\\=& \sum_{|p^{r-3}| \leq |\ell| \leq 1, p \nmid \ell}{p^6(T_p\lambda)(\iota(p^{r-3},\ell))|\ell|^2p^{6r-18}} + \sum_{|\ell| = |p^{r-3}|}{p^5(T_p\lambda)(\iota(p^{r-3},\ell))|\ell|^2p^{6r-18}}.&\end{flalign*}
Breaking into cases depending on whether $p$ is inert or split, we obtain
\begin{flalign*}\textbf{Case }p\ \textbf{inert},\ t=p^{r-3}:\ &p^6\lambda(\iota(p^{r-3},1)\tau)p^{6r-18}&\end{flalign*}
or
\begin{flalign*}
\textbf{Case }p\ \textbf{split},\ & t=p^{r-3}:\ p^6\lambda(\iota(p^{r-3},1)\tau)p^{6r-18}&\\&+\sum_{i=1,2, 1 \leq k \leq r-3}{p^{6-2k}\lambda(\iota(p^{r-4},\pi_i^{k-1}))p^{6r-18}} & \\ &+\sum_{|\ell| = |p^{r-3}|}{p^5 \left\{\lambda(\iota(p^{r-4},\ell \pi_1^{-1}))+\lambda(\iota(p^{r-4},\ell \pi_2^{-1}))\right\}|\ell|^2p^{6r-18}}.&\end{flalign*}

The terms of the form $\lambda(\iota(p^{r-4},\ell))$ are
\begin{flalign*}
&-\sum_{|\ell|=|p^{r-2}|}{p^2\lambda(\iota(p^{r-4},\ell/p))|\ell|^2p^{6r-12}} + \sum_{|p^{r-3}| \leq |\ell| \leq 1}{p\lambda(\iota(p^{r-4},\ell/p))|\ell|^2p^{6r-12}}&\\ &+ \sum_{|\ell| = |p^{r-3}|}{(p^{-1}+p^{-2})\lambda(\iota(p^{r-4},\ell))|\ell|^2p^{6r-12}}-\sum_{|p^{r-4}| \leq |\ell| \leq 1}{p\lambda(\iota(p^{r-4},\ell))|p\ell|^2p^{6r-12}},&\end{flalign*}
which simplifies to give
\begin{flalign*}\textbf{Case }t=p^{r-4}:\ &\sum_{|\ell|=|p^{r-4}|}{-(p^{-2}+p^{-3})\lambda(\iota(p^{r-4},\ell))|\ell|^2p^{6r-12}}.& \end{flalign*}

The terms with $p^{r-6}$ combine to give zero.

Now we combine the above final expressions with the terms from $N_2$, which yields some cancellation. When $p$ is inert, we obtain
\[\lambda(\iota(p^r,1))p^{6r}-\lambda(\iota(p^{r-1},1)\tau)p^{6r-6}-p^6\left(\lambda(\iota(p^{r-2},1))p^{6r-12}-\lambda(\iota(p^{r-3},1)\tau)p^{6r-18}\right).\]
Multiplying by the zeta factor $\zeta_p(2s-6)$ telescopes the terms to give
\[\lambda(\iota(p^r,1))p^{6r}-\lambda(\iota(p^{r-1},1)\tau)p^{6r-6}\]
as desired. 

When $p$ is split, we cancel terms to obtain
\begin{align*}&\lambda(\iota(p^r,1))p^{6r}-p^6\lambda(\iota(p^{r-2},1))p^{6r-12}+ \sum_{i=1,2, 1 \leq k \leq r}{\lambda(\iota(p^r,\pi_i^k))|\pi_i^k|^2p^{6r}} \\-& p^6\left(\sum_{i=1,2, 1 \leq k \leq r-2}{\lambda(\iota(p^{r-2},\pi_i^k))|\pi_i^k|^2p^{6r-12}}\right)-\lambda(\iota(p^{r-1},1)\tau)p^{6r-6}\\+&p^6\lambda(\iota(p^{r-3},1)\tau)p^{6r-18}+\sum_{0 \leq j \leq r-2}{(-p^4)|\pi_i^j|^2\lambda(\iota(p^{r-2},\pi_i^j))p^{6r-12}}\\ -&p^6\left(\sum_{0 \leq j \leq r-4}{(-p^4)|\pi_i^j|^2\lambda(\iota(p^{r-4},\pi_i^j))p^{6r-24}}\right).\end{align*}
Multiplying by the zeta factor telescopes the terms to give the desired expression, completing the proof.
\end{proof}

Theorem \ref{thm:unramcalc} follows from comparing Theorems \ref{thm:deltainert} and \ref{thm:deltasplit} with Theorems \ref{thm:inert} and \ref{thm:split}.

\section{Ramified integral}\label{spin:Ram}
In this section we show that the data for the integrals $I$ and $I'$ can be chosen to trivialize the integral at the bad finite places.  We prove this by modifying the argument of \S 12 of Gan-Gurevich \cite{gg}.  For the reader's convenience, we have written out the details.

Fix a rational prime $p$ and (by abuse of notation) write $L$ for the completion at $p$ of the quadratic extension $L/\mathbf{Q}$ used in the preceding sections.  We write $\sigma$ for the Galois automorphism on $L$ (which interchanges the factors if $L = \mathbf{Q}_p\times \mathbf{Q}_p$).
\begin{prop} \label{prop:ram} Given $v_0$ in the space of $\pi_p$, there exists $v$ in the space of $\pi_p$ and a local section $f_p$ for the Eisenstein series, both of which depend only on $v_0$, so that for all $(U_R,\chi)$-models $\ell$,
\[\int_{U_B(\mathbf{Q}_p)Z(\mathbf{Q}_p)\backslash (\GL_{2} \boxtimes \GL_{2,L}^*)(\mathbf{Q}_p)}{f(g,s)\ell(\pi(g) v)\,dg} = \ell(v_0).\]
Additionally, $\alpha_p$ in $\mathcal{S}(V_5)$ may be chosen so that
\[\int_{N_2(\mathbf{Q}_p)N_L(\mathbf{Q}_p)Z(\mathbf{Q}_p)\backslash (\GL_{2} \boxtimes \GSp_4)(\mathbf{Q}_p)}{f(g,s)\ell(\pi(g) v)\alpha(v_Dg)\,dg} = \ell(v_0).\]
\end{prop}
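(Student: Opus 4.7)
The plan is to adapt Section 12 of Gan-Gurevich~\cite{gg} by constructing highly localized local data. Set $v := v_0$. By smoothness of $\pi_p$, choose an open compact subgroup $K_0 \subset \GSp_6(\Q_p)$ fixing $v_0$, and put $K_H := K_0 \cap H(\Q_p)$. Define the section $f_p \in \mathrm{Ind}_B^{\GL_2}(\delta_B^s)$ by prescribing, via the Iwasawa decomposition $\GL_2(\Q_p) = B(\Q_p) \GL_2(\Z_p)$, that its restriction to $\GL_2(\Z_p)$ is a normalized bump function supported in the first projection $K_1 := \mathrm{pr}_1(K_H) \subset \GL_2(\Z_p)$.

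After applying the Iwasawa decomposition on $H$, the integration domain $U_B(\Q_p) Z(\Q_p) \backslash H(\Q_p)$ can be parameterized as $T_H/Z \times K_H^{\mathrm{max}}$. The support of $f_p$ restricts the maximal compact component to (a subset of) $K_H$, on which $\pi$ fixes $v_0$, so that $\ell(\pi(tk) v_0) = \ell(\pi(t) v_0)$. The remaining integration over $T_H/Z$ is then handled using the $(U_R,\chi)$-equivariance of $\ell$ combined with the triviality of $\chi$ on $N \subset U_R$ (see Definition~\ref{def:iota}): conjugation by $T_H$ scales $\chi$ on $U_R$, and the interplay of this with the modular character $\delta_B^s$ from the section forces the torus integration to collapse to evaluation at the identity, yielding $\ell(v_0)$ up to a normalization that can be absorbed into $f_p$.

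For the $I'$ variant, additionally choose $\alpha_p \in \mathcal{S}(V_5(\Q_p))$ as a bump function concentrated so that $\alpha_p(v_D g_2)$ restricts $g_2$ to a small compact neighborhood of $\GL_{2,L}^*$ inside $\GSp_4(\Q_p)$; the rest of the argument proceeds analogously, with $\alpha_p$ playing the role of the section in the extra factor. The principal obstacle is the torus step outlined above: the section $f_p$ by itself does not restrict the non-compact torus direction of the integration domain, and adapting the detailed bookkeeping of Gan-Gurevich's Section 12 to handle this direction for an arbitrary $(U_R,\chi)$-model $\ell$ (rather than the specific model arising in the CAP setting treated there) is the primary technical task, and the reason the authors explicitly write out the details.
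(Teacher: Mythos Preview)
Your proposal has a genuine gap, and you have in fact identified it yourself in your final paragraph without resolving it. Setting $v = v_0$ cannot work: the section $f_p$ controls only the $\GL_2$ factor, and nothing in your construction restricts the non-compact torus direction $T_L/Z$ in $\GL_{2,L}^*$. The $(U_R,\chi)$-equivariance of $\ell$ by itself says nothing about how $t \mapsto \ell(\pi(t)v_0)$ behaves for $t$ in this torus, and there is no mechanism by which the modular character $\delta_B^s$ on the $\GL_2$ side could force a collapse of an integral over an unrelated non-compact group. So the step where you write ``forces the torus integration to collapse to evaluation at the identity'' is simply not justified, and indeed is false with $v=v_0$.

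The paper's key idea, which is missing from your plan, is that one must \emph{modify the vector} rather than rely solely on the section. Starting from $v_0$, one builds successive vectors $v_1, v_2, v_3$ by integrating $\pi(u)v_0$ against carefully chosen Schwartz functions $\varphi$ as $u$ runs over specific one- and two-parameter unipotent subgroups of $U_R$. Because $\ell$ is $(U_R,\chi)$-equivariant, one has $\ell(\pi(gu)v_0) = \chi(gug^{-1})\ell(\pi(g)v_0)$, so the unipotent integration becomes a Fourier transform in the $u$-variable. A Fourier inversion lemma then converts each such integration into a characteristic-function cutoff on entries of $g$: the $v_3$ step forces $c \in p^n\mathcal{O}_L$, the $v_2$ step forces $d \in 1 + p^n\mathcal{O}_L$, and the $v_1$ step forces $\nu/N(\ell) \in 1 + p^{n_0}\Z_p$. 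These conditions together confine the $\GL_{2,L}^*$-integration to a compact set on which $\ell(\pi(g)v_0)$ is constant, after which the section $f_p$ handles the $\GL_2$ factor as you suggested. The reduction of the $I'$ integral to the $I$ integral via a localized $\alpha_p$ is as you outlined.
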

Here is a simple lemma that will be used in the proof.
\begin{lemma}\label{fourierInv} Let $M$ be $\mathbf{Q}_p$ or $L$, viewed as a locally compact abelian group.  Let $dz$ denote the Haar measure on $M$ giving $\mathcal{O}_M$ measure 1, and let $| \cdot|:M^\times \rightarrow \R_{> 0}^\times$ be defined by the property $d(mz) = |m|dz$.  Then
\[\int_M{\psi(vz)|p|^n\charf(p^n z \in \mathcal O_M)\,dz} = \begin{cases} 1 & \textrm{if }v \in p^n\mathcal O_M \\ 0 & \textrm{otherwise} \end{cases}\]
and
\[\int_M{\psi(vz)\psi(-z)|p|^n\charf(p^n z \in \mathcal O_M)\,dz} = \begin{cases} 1 & \textrm{if }v \in 1+ p^n\mathcal O_M \\ 0 & \textrm{otherwise}. \end{cases}\]
\end{lemma}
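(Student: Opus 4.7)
The plan is to reduce both identities to the standard orthogonality relation for additive characters on a compact open subgroup, namely that integration of a continuous character $\chi$ over a compact open subgroup $K$ yields $\mathrm{vol}(K)$ if $\chi|_K$ is trivial and $0$ otherwise. The first step is a change of variables $w = p^n z$, under which $dz = |p|^{-n}\,dw$, the factor $|p|^n$ in the integrand cancels, and the support condition $p^n z \in \mathcal{O}_M$ becomes simply $w \in \mathcal{O}_M$. The first integral then takes the form
\[\int_{\mathcal{O}_M} \psi(v p^{-n} w)\,dw.\]

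Next I would observe that $w \mapsto \psi(v p^{-n} w)$ is a continuous character of the compact abelian group $\mathcal{O}_M$. Taking $\psi$ in the standard normalization implicit throughout the paper (conductor $\mathbf{Z}_p$ on $\mathbf{Q}_p$, and pulled back by $\tr_{L/\mathbf{Q}_p}$ when $M = L$, where $p \nmid 2D$ is unramified in $L$ so that $\mathcal{O}_L$ is self-dual under the trace pairing), this character is trivial on $\mathcal{O}_M$ exactly when $v p^{-n} \in \mathcal{O}_M$, i.e.\ when $v \in p^n \mathcal{O}_M$. Applying orthogonality together with $\mathrm{vol}(\mathcal{O}_M) = 1$ yields the first displayed formula.

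For the second identity I would simply combine the two character factors, $\psi(vz)\psi(-z) = \psi((v-1)z)$, which reduces the integral to the previous case with $v$ replaced by $v-1$. The resulting condition $v - 1 \in p^n\mathcal{O}_M$ is exactly $v \in 1 + p^n\mathcal{O}_M$, giving the second formula.

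There is no substantive obstacle; the only thing to keep in mind is that one must use the conductor-$\mathcal{O}_M$ normalization of $\psi$ so that the orthogonality dichotomy pivots cleanly on $v p^{-n} \in \mathcal{O}_M$ versus $v p^{-n} \notin \mathcal{O}_M$. This is consistent with the conventions used elsewhere in the paper (and with the hypothesis $p \nmid 2D$ in the contexts where the lemma is invoked).
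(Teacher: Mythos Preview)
Your proposal is correct and follows essentially the same approach as the paper: the paper simply declares the first identity a straightforward calculation and notes that the second follows immediately from the first, which is exactly what you carry out via the change of variables $w = p^n z$, orthogonality of characters on $\mathcal{O}_M$, and the substitution $v \mapsto v-1$.
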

\begin{proof} The first statement is a straightforward calculation, and the second statement follows immediately from the first.\end{proof}
\begin{proof}[Proof of Proposition \ref{prop:ram}]
The statement for the second integral follows from the first.  Indeed, if we choose $\alpha_p\in \mathcal{S}(V_5)$ so that $\alpha_p(v_Dg)$ has support $\GL_{2,L}^*(\mathbf{Q}_p) K'$, where $K'$ is a sufficiently small open compact subgroup of $\GSp_4(\mathbf{Q}_p)$, we are reduced to proving the statement for the first integral.

Define the congruence subgroup $K_N = \(1 + p^NM_6(\Z_p)\) \cap \GSp_6(\Q_p)$.  There is a $K_{n_0}$ stabilizing $v_0$.  Define $\varphi_1 \in \mathcal C_c^\infty(\Q_p)$ by $\varphi_1(z) = \psi(-z)|p|^{n_0} \charf(p^{n_0} z \in \Z_p).$ Now set 
\[v_1 = \int_{\mathbf G_a(\Q_p)}{\varphi_1(z) \pi(u(z)) v_0 \,dz},\]
where
\[u(z) = \left(\begin{array}{c|cc|cc|c}1& & & & & \\ \hline &1& &0&0& \\& & 1&0&z &\\ \hline & & &1& & \\& & & &1 & \\ \hline & & & & &1\end{array}\right).\]
Then $v_1$ is stabilized by some congruence subgroup $K_n$.

For an element $\gamma \in L$, we define $\gamma_+ = (\gamma+\sigma \gamma)/2$ and $\gamma_- = (\gamma-\sigma\gamma)/2$.  Now set $\varphi^L_{2,n}(\gamma) = \psi_L(-\gamma)|p|^n_L \charf(p^n\gamma \in \mathcal O_L)$ and $\varphi^L_{3,n}(\gamma) = |p|^n_L \charf(p^n\gamma \in \mathcal O_L)$, where $\psi_L(\gamma) = \psi(\gamma_+)$.  We also write $\gamma_2$ for the column vector $^t(\gamma_+,\gamma_-)$ and $\gamma_3 = ^t(D^{-1}\gamma_+,\gamma_-)$.  Define
\[u_2(\gamma) = \left(\begin{array}{cccc} 1& -^t\gamma_2 & &  \\  & 1_2 &  &  \\  &   &  1_2 &\gamma_2 \\  &     &  &1 \end{array}\right)\textrm{ and } u_3(\gamma) = \left(\begin{array}{cccc} 1&  &  ^t\gamma_3 &  \\  & 1_2 &  & \gamma_3 \\  &   &  1_2 & \\  &     &  &1 \end{array}\right).\]
Now set
\[v_2 =\int_{\mathbf G_a(L)}{\varphi^L_{2,n}(\gamma) \pi(u_2(\gamma)) v_1 \,d\gamma} \textrm{ and }v_3 =\int_{\mathbf G_a(L)}{\varphi^L_{3,n}(\gamma) \pi(u_3(\gamma)) v_2 \,d\gamma}.\]
Suppose that
\[g = \left(\begin{array}{cccc} \nu& & &\\ & a& b& \\ & c&d & \\ & & &1 \end{array}\right),\]
where the four-by-four matrix $\mm{a}{b}{c}{d}$ lies in $\GL_{2,L}^*$. Then
\begin{align*} \ell(\pi(g)v_3) &= \int_{L}{\varphi^L_{3,n}(\gamma) \ell(\pi(gu_3(\gamma)) v_2) \,d\gamma} \\ &=\( \int_{L}{\varphi^L_{3,n}(\gamma)\chi(gu_3(\gamma)g^{-1})\,d\gamma}\) \ell(\pi(g)v_2) \\ &= \(\int_{ L}{\varphi^L_{3,n}(\gamma)\psi(c_{22}\gamma_+ + c_{12}\gamma_-)\,d\gamma}\) \ell(\pi(g)v_2) \\ &= \charf(c \in p^nM_2(\Z_p))\ell(\pi(g)v_2).\end{align*}
Here we have used Lemma \ref{fourierInv} and the fact the the elements of $\GL_{2,L}^*(\mathbf{Q}_p)$ are those matrices in $\GSp_4(\mathbf{Q}_p)$ that have the form
\[\left(\begin{array}{cc|cc}a_{11}&a_{12}&b_{11}&b_{12} \\ Da_{12}&a_{11}&b_{12}&Db_{11}\\ \hline Dc_{22}&c_{12}&d_{11}&Dd_{21} \\c_{12}& c_{22}& d_{21}&d_{11}\end{array}\right).\]
Similarly $\ell(\pi(g) v_2) = \charf(d \in 1 + p^n \mathcal O_L) \ell(\pi(g)v_1).$  Set $K_{n,L} = \GL_{2,L}^* \cap \(1+ p^nM_4(\Z_p)\)$ and define $B_{L,n} = \set{\mm{b_1}{*}{}{b_2} \in \GL_{2,L}^*(\mathbf{Q}_p) : b_2 \in 1 + p^n \mathcal O_L}$.  Note the conditions $c \in p^n \mathcal O_L$ and $d \in 1 + p^n\mathcal O_L$ are together equivalent to
$\mm{a}{b}{c}{d} \in B_{L,n}K_{n,L}$.  Now there is $N \geq n$ so that $v_3$ is stable by $K_N$.  Set $K_{N,2} = \GL_2(\Q_p) \cap \(1 + p^NM_2(\Z_p)\)$.  Pick a nonzero section $f_N^*$ supported on $B_2K_{N,2}$, for example
\[f_N^*(g,s) = \int_{B_2}{\delta_{B_2}^{-s}(b)\charf(bg \in K_{N,2}) \,db}\]
where $db$ is a right Haar measure.  Then
\[\int_{U_BZ\backslash \GL_{2} \boxtimes \GL_{2,L}^*}{f(g_1,s)\ell(\pi(g) v_3)\,dg} =\int_{U_BZ\backslash B_2K_{N,2} \boxtimes \GL_{2,L}^*}{f(g_1,s)\ell(\pi(g) v_3)\,dg}\]
and this is, up to positive constants,
\begin{align*}
&= \int_{U_BZ\backslash B_2 \boxtimes \GL_{2,L}^*}{\delta_{B_2}^{-1}(g_1)f(g_1,s)\ell(\pi(g) v_3)\,dg}\\
&= \int_{N_L \backslash \GL_{2,L}^*}{|\nu(g_2)|^{s-1}\ell(\pi(g_2) v_3)\,dg}\end{align*}
where we have embedded $\GL_{2,L}^*$ in $\GSp_6$ via the elements
\[g_2 = \left(\begin{array}{cccc} \nu& & &\\ & a& b& \\ & c&d & \\ & & &1 \end{array}\right)\]
with $\mm{a}{b}{c}{d} \in \GL_{2,L}^*$.  Set $T_{L,n} = T_L \cap B_{L,n}$.  Then again up to a positive constant this is
\begin{align*} &= \int_{N_L \backslash B_{L,n}K_{n,L}}{|\nu(g_2)|^{s-1}\ell(\pi(g_2)v_1)\,dg} \\ &= \int_{T_{L,n}}{\delta_{B_L}^{-1}(t)|\nu(t)|^{s-1}\ell(\pi(t)v_1)\,dt}\end{align*}
Now, applying Lemma \ref{fourierInv} again as above, 
\[\ell(\pi(t) v_1) = \charf\(\frac{\nu}{N(\ell)} \in 1 +p^{n_0}\Z_p\) \ell(\pi(t)v_0)\]
where, in block diagonals, $t = \diag(\nu,\nu\,^t\ell^{-1},\ell,1)$.  Under the conditions $\frac{\nu}{N(\ell)} \in 1 +p^{n_0}\Z_p$, the support of this last integral becomes open compact, and the integrand becomes the constant $\ell(v_0)$ where the integrand is supported.  Hence, for some positive constant $C$, 
\[\int_{U_BZ\backslash \GL_{2} \boxtimes \GL_{2,L}^*}{f(g,s)\ell(\pi(g) v_3)\,dg} = C\ell(v_0).\]
Setting $v = C^{-1}v_3$ gives the proposition.
\end{proof}

\bibliography{integralRepnBib_final}
\bibliographystyle{abbrv}

\end{document}